\documentclass[11pt]{amsart}
\usepackage[margin=2.5cm]{geometry}
\RequirePackage{amsmath,amssymb,amsthm, amscd, comment,mathtools}

\usepackage[all]{xy}

\usepackage[T1]{fontenc}
\usepackage{lmodern}
\usepackage{graphicx}
\usepackage{tikz-cd}
\allowdisplaybreaks[1]
\usepackage[pagebackref,breaklinks,colorlinks,linkcolor=red,anchorcolor=red,citecolor=blue]{hyperref}

\calclayout

\newtheorem{theorem}[subsection]{Theorem}
\newtheorem{proposition}[subsection]{Proposition}
\newtheorem{corollary}[subsection]{Corollary}
\newtheorem{lemma}[subsection]{Lemma}
\newtheorem{conjecture}[subsection]{Conjecture}

\theoremstyle{definition}

\newtheorem{definition}[subsection]{Definition}
\newtheorem{remark}[subsection]{Remark}

\newtheorem{question}[subsection]{Question}

\numberwithin{equation}{subsection}

\newcommand{\A}{\mathbb A}
\newcommand{\Spec}{\operatorname{Spec}}

\newcommand{\cyc}{\mathrm{cyc}}
\newcommand{\spcy}{\operatorname{sp}^{\cyc}}
\newcommand{\spet}{\operatorname{sp}^{\mathrm{\acute{e}t}}}
\newcommand{\Dctf}{D_{\mathrm{ctf}}}

\newcommand{\cC}{\widetilde C}
\newcommand{\VV}{\mathbb{V}}
\makeatletter

\begin{document}

\title[non-acyclicity classes]{Characteristic cycles and non-acyclicity classes for constructible \'etale sheaves}

\author[Kaicheng Bao]{Kaicheng Bao}
\author[Jiangnan Xiong]{Jiangnan Xiong}
\author[Enlin Yang]{Enlin Yang}

\address{School of Mathematical Sciences, Peking University, No.5 Yiheyuan Road Haidian District.,
Beijing, 100871, P.R. China.}
\email{bkc2113@stu.pku.edu.cn}

\address{School of Mathematical Sciences, Peking University, No.5 Yiheyuan Road Haidian District.,
	Beijing, 100871, P.R. China.}
\email{xiongjiangnan@stu.pku.edu.cn}

\address{School of Mathematical Sciences, Capital Normal University, No. 105 XiSanHuan North Road, Beijing 100048, P. R. China}
\email{yangenlin@cnu.edu.cn}




\thanks{\today}

\dedicatory{To the memory of Professor Linsheng Yin (1963--2015)}

\begin{abstract}
Let $f:X\to Y$ be a morphism between smooth schemes over a perfect field $k$. Let $\mathcal F$ be a constructible sheaf on $X$ and $Z$ a closed subscheme of $X$ such that $f$ is $SS(\mathcal F)$-transversal outside $Z$. We construct a class supported on the non-transversal locus $Z$ by using the characteristic cycle $CC(\mathcal F)$ defined by T. Saito. This class is a geometric counterpart of the non-acyclicity class introduced by the third author and Zhao in \cite{YZ22}. Under certain conditions, the formation of this class is compatible with base change and proper pushforward. It also satisfies the Milnor formula and a conductor formula. 

We conjecture that the image of this geometric class under the cycle class map is the non-acyclicity class. This conjecture can be viewed as a (relative version of) Milnor type formula for non-isolated singularities. We prove this conjecture when \(k\) is algebraically closed, \(X\) is quasi-affine and \(Y=\mathbb A^1_k\). 

\end{abstract}

\maketitle
\tableofcontents
\section{Introduction}
\subsection{}Let $k$ be a perfect field of characteristic $p>0$ and $S=\Spec k$. 
Let $\Lambda$ be a finite field of characteristic $\ell\neq p$.
Let $X$ be a smooth scheme over $S$ and $f: X\to Y$ a flat morphism of finite type to a smooth curve $Y$ over $S$. If $f$ has an isolated singularity at a closed point $x_0\in |X|$, there is an invariant $\mu(X/Y,x_0)$ supported on $x_0$, called the Milnor number. The Milnor formula \cite[Th\'eor\`eme 2.4]{Del72} proved by Deligne says that the Milnor number is related to the total dimension at $x_0$ of the vanishing cycles $R\Phi(\Lambda,f)$ of $f$ for the constant sheaf $\Lambda$, i.e.,
\begin{equation}\label{eq:DelMil}
(-1)^n\mu(X/Y,x_0)=-\operatorname{dimtot}R\Phi_{\overline x_0}(\Lambda,f),
\end{equation}
where $n={\rm dim}X$ and $\operatorname{dimtot}={\rm dim}+{\rm Sw}$ denotes the total dimension.
Later in \cite{Del11}, Deligne proposed a Milnor formula for any constructible sheaf $\mathcal F$ of $\Lambda$-modules on $X$, which is realized and proved by Saito  in \cite{Sai17a}. If $x_0\in |X|$ is  at most an isolated characteristic point of $f$ with respect to the singular support of $\mathcal F$, then Saito's theorem \cite[Theorem 5.9]{Sai17a} says
\begin{equation}\label{introSaiM}
(CC(\mathcal F), df)_{T^{\ast} X,x_0}=-\operatorname{dimtot}R\Phi_{\overline x_0}(\mathcal F,f),
\end{equation}
where $CC(\mathcal F)$ is the characteristic cycle of $\mathcal F$. Now we propose the following question:
\begin{question}\label{Que1}
	Is there a Milnor type formula for non-isolated singular/characteristic points?
\end{question}
\subsection{}
If $f$ is a projective flat morphism and if $f$ is smooth outside $f^{-1}(y)$ for a closed point $y$ of the curve $Y$, then the conductor formula of Bloch (cf. \cite[Theorem 2.2.3 and Corollary 2.2.4]{Sai21})
\begin{equation}\label{eq:BlochCond}
-a_y(Rf_\ast\Lambda)=(-1)^{n}(X,X)_{T^{\ast} X,X_y}=(-1)^{n}{\rm deg}c_{n,X_y}^X(\Omega^1_{X/Y})\cap[X]
\end{equation}
gives a partial answer to Question \ref{Que1}.
We view \eqref{eq:DelMil}, \eqref{introSaiM} and \eqref{eq:BlochCond} in the form
\begin{equation}\label{eq:display-135}
{\rm deg}({\rm geometric~class~on~singular~locus})={\rm deg}({\rm cohomology~class~on~singular~locus}).
\end{equation}
We expect the equality holds without taking degree, i.e., 
\begin{equation}\label{eq:display-139}
{\rm cl}({\rm geometric~class})={\rm cohomology~class},
\end{equation}
where ${\rm cl}$ is the cycle class map.
In the paper \cite{YZ22}, the third author and Yigeng Zhao introduced a (cohomological) non-acyclicity class which is supported on the non-acyclicity locus. Let $Y$ be a smooth scheme over $k$ and $X\to Y$ a separated morphism over $k$.
Let $Z\subseteq X$ be a closed subscheme and $\mathcal F\in D_{\rm ctf}(X,\Lambda)$ such that $X\setminus Z\to Y$ is universally locally acyclic relatively to $\mathcal F|_{X\setminus Z}$. Then the cohomological non-acyclicity class
$\widetilde{C}^Z_{X/Y/k}(\mathcal F)$ is a class in $H^0_Z(X,\mathcal K_{X/Y/k})$, where $\mathcal K_{X/Y/k}$ sits in a distinguished triangle
\begin{equation}\label{intro:KXYK}
\mathcal K_{X/Y}\to \mathcal K_{X/k}\to \mathcal K_{X/Y/k}\xrightarrow{+1}.
\end{equation}
In this paper, we construct the geometric counterpart of the non-acyclicity class
$\widetilde{C}^Z_{X/Y/k}(\mathcal F)$. More precisely, when $X\to Y$ is a morphism between smooth schemes of pure dimensions over $k$ such that $X\to Y$ is $SS(\mathcal F)$-transversal outside $Z$, then we construct a
 class $cc^Z_{X/Y/k}(\mathcal F)\in {\rm CH}_{0}(Z)$ (cf. \eqref{eq:defGNA}), called the geometric non-acyclicity class of $\mathcal F$.
Explicitly, we put $r=\dim Y>0$.
Let $q:\mathbb{P}(T^*Y\times_YX)\to X$ be the projective bundle, with the
tautological line bundle $\mathcal O(-1)$ over it. Let $\overline{df}$ be the
section of $q^*T^*X\otimes\mathcal O(1)$ induced by $df:T^*Y\times_Y X\rightarrow T^*X$.
Write $CC(\mathcal F)_{\mathcal O(1)}$ for the conical cycle locally
identified with $q^*CC(\mathcal F)$ after trivializing $\mathcal O(1)$, cf. section~\ref{subsec:defGNA}.
Define
\begin{equation}\label{eq:intro-defGNA}
cc^Z_{X/Y/k}(\mathcal F)
:= q_{Z*}\left[
c_{r-1}\left(\frac{q^*f^*T^*(Y/k)}{\mathcal O(-1)}\right)
\cap\overline{df}^{\,!}CC(\mathcal F)_{\mathcal O(1)}
\right]\in\mathrm{CH}_0(Z).
\end{equation}
By $SS(\mathcal F)$-transversal condition of $f$ outside $Z$, we may define the 
(geometric)
relative characteristic class
$cc_{X\setminus Z/Y}(\mathcal F)\in \operatorname{CH}_{\dim Y}(X\setminus Z)$
which has a canonical extension $\overline{cc}_{X/Y}(\mathcal F)\in\operatorname{CH}_{\dim Y}(X)$. 
For $\tau:Z\hookrightarrow X$, we have the fibration formula \eqref{eq:relative-characteristic-fibration}
\begin{equation}\label{eq:display-152}
cc_{X/k}(\mathcal F)=c_{\rm dim Y}(f^\ast\Omega_{Y/k}^{1,\vee})\cap \overline{cc}_{X/Y}(\mathcal F)+\tau_*cc^Z_{X/Y/k}(\mathcal F).
\end{equation}
In general, we may replace the $\Spec k$ with general smooth connected scheme $S$, and we have the following properties, cf.~equation~\eqref{eq:relative-characteristic-fibration},
Propositions~\ref{prop:pbGNA}, \ref{prop:pfGNA},
and~\ref{prop:geoNA-affine-line},
Theorem~\ref{thm:SMF},
Corollary~\ref{cor:projective-conductor}:
\begin{theorem}
\label{thm:intro-geometricNA-properties}
Let $k$ be a perfect field of positive characteristic and $\Lambda$
a finite local ring whose residue characteristic is invertible in $k$.
Consider morphisms $f:X\rightarrow Y$, $g:Y\rightarrow S$
of smooth separated schemes of finite type over $k$, all of pure
dimension. Assume that $S$ is connected of dimension $s$ and that
$g$ is smooth of pure relative dimension $r>0$.
Put $h=g\circ f$, and let $\tau:Z\hookrightarrow X$ be a closed immersion.
Let $\mathcal F\in D_{\rm ctf}(X,\Lambda)$ such that $h$ is
$SS(\mathcal F)$-transversal and $f|_{X\setminus Z}$ is
$SS(\mathcal F|_{X\setminus Z})$-transversal.
Then the geometric non-acyclicity class
$cc^Z_{X/Y/S}(\mathcal F)\in{\rm CH}_s(Z)$
has the following properties.

\begin{enumerate}
\item $($Fibration formula$)$
Let
\begin{equation}
    cc_{X/S}(\mathcal F)=(-1)^sdh^!CC(\mathcal F)
\end{equation}
be the geometric characteristic class.
There is a canonical class
$\overline{cc}_{X/Y}(\mathcal F)\in{\rm CH}_{r+s}(X)$
whose restriction to $X\setminus Z$ is
$cc_{(X\setminus Z)/Y}(\mathcal F|_{X\setminus Z})$.
We have
\begin{equation}\label{eq:intro-geometricNA-fibration}
cc_{X/S}(\mathcal F)
=
c_r\bigl((f^*\Omega^1_{Y/S})^\vee\bigr)
\cap\overline{cc}_{X/Y}(\mathcal F)
+\tau_*cc^Z_{X/Y/S}(\mathcal F)
\quad{\rm in}\quad{\rm CH}_s(X).
\end{equation}

\item $($Pullback$)$
Let $\delta:S'\to S$ be a morphism, where $S'$ is smooth and
connected of dimension $s'$ over $k$.
Put $X'=X\times_SS'$, $Y'=Y\times_SS'$ and $Z'=Z\times_SS'$,
and let $i_X:X'\to X$ and $f':X'\to Y'$ be the induced morphisms.
Assume that $X'$ and $Y'$ are smooth and of pure dimension,
that $\dim X'-\dim X=s'-s$, and that $i_X$ is properly
$SS(\mathcal F)$-transversal. Then we have
\begin{equation}\label{eq:intro-geometricNA-pullback}
i_X^!cc^Z_{X/Y/S}(\mathcal F)
=
cc^{Z'}_{X'/Y'/S'}(i_X^*\mathcal F)
\quad{\rm in}\quad{\rm CH}_{s'}(Z').
\end{equation}

\item $($Proper pushforward$)$
Let $p:X\to X'$ be a proper morphism over $Y$, where $X'$ is
smooth and of pure dimension over $k$, and let $f':X'\to Y$
satisfy $f=f'\circ p$.
Let $Z'\subseteq X'$ be closed with $Z\subseteq p^{-1}(Z')$.
Assume that $\dim p_\circ SS(\mathcal F)\leq\dim X'$,
where $p_\circ SS(\mathcal F)$ is the image of
$(dp)^{-1}SS(\mathcal F)$ under
$T^*X'\times_{X'}X\to T^*X'$.
Then $g\circ f'$ is $SS(Rp_*\mathcal F)$-transversal and
$f'|_{X'\setminus Z'}$ is
$SS((Rp_*\mathcal F)|_{X'\setminus Z'})$-transversal.
Moreover, we have
\begin{equation}\label{eq:intro-geometricNA-pushforward}
p_*cc^Z_{X/Y/S}(\mathcal F)
=
cc^{Z'}_{X'/Y/S}(Rp_*\mathcal F)
\quad{\rm in}\quad{\rm CH}_s(Z').
\end{equation}

\item $($Milnor formula$)$
Assume $S=\Spec k$, $Y$ is a smooth curve and $Z$ is a finite
set of closed points, then we have
\begin{equation}\label{eq:intro-geometricNA-Milnor}
cc^Z_{X/Y/k}(\mathcal F)
=
-\sum_{x\in Z}
\operatorname{dimtot}R\Phi_{\bar x}(\mathcal F,f)\,[x]
\quad{\rm in}\quad{\rm CH}_0(Z),
\end{equation}
where $R\Phi(\mathcal F,f)$ is the complex of vanishing cycles
and $\operatorname{dimtot}={\rm dim}+{\rm Sw}$ is the total dimension.

\item $($Conductor formula$)$
Assume $S=\Spec k$, $Y$ is a smooth connected curve,
and $f$ is proper.
If $T\subset Y$ is a finite set of closed points and
$Z=f^{-1}(T)$, then we have
\begin{equation}\label{eq:intro-geometricNA-conductor}
f_*cc^Z_{X/Y/k}(\mathcal F)
=
-\sum_{y\in T}a_y(Rf_*\mathcal F)\,[y]
\quad{\rm in}\quad{\rm CH}_0(T),
\end{equation}
where
$a_y(\mathcal G)
={\rm rank}\mathcal G_{\bar\eta}
-{\rm rank}\mathcal G_{\bar y}
+{\rm Sw}_y(\mathcal G)$
is the Artin conductor of
$\mathcal G\in D_{\rm ctf}(Y,\Lambda)$ at $y$, and $\eta$
is the generic point of $Y$.

\end{enumerate}
\end{theorem}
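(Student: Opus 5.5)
The whole theorem would be driven by one geometric decomposition, namely the excess intersection formula for the section $\overline{df}$ of $q^*T^*X\otimes\mathcal O(1)$ on $\mathbb P=\mathbb P(T^*Y\times_YX)$. I would first prove the fibration formula (1) and then derive the other properties from it, together with functoriality of refined Gysin maps and Saito's theorems on characteristic cycles.

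\textbf{Step 1: fibration formula.} Write $E=f^*T^*(Y/S)$, of rank $r$. The section $dh$ of $T^*X$ factors through the relative situation. The class $(-1)^s dh^!CC(\mathcal F)$ would be computed by deforming $dh$ to the zero section of $E$ and then intersecting with the image of $E\to T^*X$. The key identity is the standard projective-bundle expression of a Gysin pullback along a vector bundle map $E\to T^*X$:
\[
0_E^!\,(df)^!C=q_*\bigl[c_{r-1}(q^*E/\mathcal O(-1))\cap \overline{df}^{\,!}C_{\mathcal O(1)}\bigr]+c_r(E^\vee)\cap(\text{relative part}).
\]
This comes from the blow-up of $E$ along its zero section, whose exceptional divisor is $\mathbb P(E)$, together with the residual intersection formula of Fulton (Thm.~9.2 / Example 3.2.16). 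On $X\setminus Z$ the map $df$ is injective on $CC(\mathcal F)$ by transversality. Hence $\overline{df}^{\,!}$ is supported over $Z$, and the excess term is $\tau_*cc^Z$. The remaining term restricts to $cc_{(X\setminus Z)/Y}$, and I would take it as the definition of $\overline{cc}_{X/Y}(\mathcal F)$. Canonicity follows because it is built from the refined intersection on the complement of the exceptional locus. Signs $(-1)^s$ versus $(-1)^{r+s}$ have to be tracked carefully.

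\textbf{Step 2: pullback and pushforward.} For (2), I would use Saito's pullback theorem $CC(i_X^*\mathcal F)=i_X^!CC(\mathcal F)$, which holds under proper $SS(\mathcal F)$-transversality. Then commute refined Gysin maps with $\overline{df}^{\,!}$, with Chern classes, and with the proper pushforward $q_{Z*}$, using the cartesian squares of projective bundles; note $T^*Y'\times X'$ is the pullback of $T^*Y\times X$. For (3), the transversality claims would follow from $SS(Rp_*\mathcal F)\subseteq p_\circ SS(\mathcal F)$. The identity would follow from the pushforward formula $CC(Rp_*\mathcal F)=p_\circ CC(\mathcal F)$, valid under the dimension hypothesis (Saito, proved in the needed generality). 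The same compatibility lifted to the $\mathcal O(1)$-twisted bundles then gives the result: pushforward commutes with $\overline{df'}^{\,!}$ through the correspondence $T^*X'\times_{X'}X$, by the projection formula for refined Gysin maps.

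\textbf{Step 3: Milnor and conductor formulas.} For (4), take $r=1$. Then $c_0=1$, $\mathbb P=X$ and $\mathcal O(1)\cong f^*T^*Y$, so $cc^Z=\overline{df}^{\,!}CC(\mathcal F)=(CC(\mathcal F),df)_{T^*X,Z}$ as localized intersection classes at the points of $Z$. The formula is then Saito's Milnor formula \eqref{introSaiM}. For (5), I would push the fibration formula forward along the proper $f$ and compare with the analogous formula on $Y$ for $Rf_*\mathcal F$, using (3) with $X'=Y$, $p=f$, $Z'=T$. The hypothesis $\dim f_\circ SS(\mathcal F)\le 1$ should hold here, since outside $T$ the map $f$ is transversal. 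This reduces (5) to the case $X=Y$ a curve. That case is (4) applied to $\mathrm{id}_Y$: $-\operatorname{dimtot}R\Phi_{\bar y}(\mathcal G,\mathrm{id})=-a_y(\mathcal G)$.

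\textbf{Main obstacle.} The main obstacle is Step 1: making the residual intersection rigorous for a cycle $CC(\mathcal F)$ rather than a scheme, and checking that the non-excess term is independent of choices so that it defines $\overline{cc}_{X/Y}$ canonically. The pushforward step's hypothesis in (5) also needs verification near $T$.
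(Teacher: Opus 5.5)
Your treatment of (3), (4) and (5) follows the paper's route, but the arguments you give for (1) and (2) have genuine gaps.

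\textbf{Fibration formula (1).} The class $\overline{cc}_{X/Y}(\mathcal F)$ is never constructed. The ``relative part'' in your key identity is never defined. Declaring ``the remaining term'' to be the definition pins down at most $c_r(E^\vee)\cap\overline{cc}_{X/Y}(\mathcal F)\in{\rm CH}_s(X)$, not a class in ${\rm CH}_{r+s}(X)$.

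The cone formula behind your identity is $0_E^![C]=q_*\bigl(c_{r-1}(q^*E/\mathcal O(-1))\cap[\mathbb P(C)]\bigr)$ for an integral cone $C\not\subset 0(X)$, together with $0_E^!(0_{E*}\Gamma)=c_r(E)\cap\Gamma$. This is a statement about conical \emph{cycles}. By contrast, $(df)^!CC(\mathcal F)$ is only a class in ${\rm CH}_{r+s}$ of a support that typically has excess dimension over $Z$. (It can be moved to $E$ only because $T^*Y\times_YX\to E$ is finite on $(df)^{-1}SS(\mathcal F)$ by transversality of $h$; there is no map $E\to T^*X$.)

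Such a class has no canonical splitting into a zero-section part and a part over $Z$. Extensions of $cc_{(X\setminus Z)/Y}$ to ${\rm CH}_{r+s}(X)$ differ by $\tau_*{\rm CH}_{r+s}(Z)$, and this changes $c_r(E^\vee)\cap\overline{cc}$. So matching the remainder with the specific class $cc^Z_{X/Y/S}(\mathcal F)$ is exactly the content of (1). Appealing to ``the refined intersection on the complement of the exceptional locus'' does not settle it.

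The paper avoids this by working with $CC(\mathcal F)$ itself:
\begin{enumerate}
\item The projective bundle formula on $\mathbb P(T^*X)$ gives canonical classes $b_j\in{\rm CH}_j(X)$.
\item One sets $\overline{cc}_{X/Y}(\mathcal F)=(-1)^{r+s}\sum_{j\ge r+s}s_{j-r-s}(T^*Y\times_YX)\cap b_j$.
\item $SS(\mathcal F)$-transversality of $h$ gives $\sum_jb_jt^j=c_t(T^*S\times_SX)\sum_ja_jt^j$ with $a_0=dh^!CC(\mathcal F)$.
\item The pushforward of $\overline{df}^{\,!}CC(\mathcal F)_{\mathcal O(1)}$ to $\mathbb P(f^*T^*(Y/S))$ is identified with $\sum_j\xi^j\cap q_S^*a_j$.
\item One evaluates $q_{S*}\bigl(c_{r-1}(q_S^*f^*T^*(Y/S)/\mathcal O(-1))\cap\xi^j\bigr)$ explicitly.
\end{enumerate}
The restriction statement then follows because the same polynomial is divisible by $c_t(T^*Y\times_Y(X\setminus Z))$ on $X\setminus Z$.

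\textbf{Pullback (2).} Your claim that $T^*Y'\times_{Y'}X'$ is the pullback of $T^*Y\times_YX$ is false. The first has rank $r+s'$, the second $r+s$; they differ by $T^*S'$ versus the pullback of $T^*S$. Equality holds only for the relative bundles $T^*(Y'/S')$ and $T^*(Y/S)$. Likewise $CC(i_X^*\mathcal F)$ lives on $T^*X'$, not on $T^*X\times_XX'$. So the cartesian squares of projective bundles along which you want to commute $i_X^!$ with $\overline{df}^{\,!}$ and $q_{Z*}$ do not exist.

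The missing step is Lemma~\ref{lem:projective-relative-expression}. Because $h$ is smooth near ${\rm supp}\,\mathcal F$ and $\pi_{X/S}$ is finite on $SS(\mathcal F)$, one first rewrites $cc^Z_{X/Y/S}(\mathcal F)$ on $\mathbb P(f^*T^*(Y/S))$, using $\overline{d_{/S}f}$ and $(\pi_{X/S})_*CC(\mathcal F)$. These relative objects do commute with base change along $\delta$. Saito's pullback theorem then yields $(\pi_{X'/S'})_*CC(i_X^*\mathcal F)=(-1)^{\dim X'-\dim X}i_{X/S}^!(\pi_{X/S})_*CC(\mathcal F)$. The hypothesis $\dim X'-\dim X=s'-s$, which your sketch never uses, is what cancels this sign against the change from $(-1)^s$ to $(-1)^{s'}$.

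\textbf{Minor points on (3)--(5).}
\begin{itemize}
\item In (3), for general proper $p$ the identity $CC(Rp_*\mathcal F)=p_!CC(\mathcal F)$ is Abe's theorem, with $\mathbb Z[1/\operatorname{char}k]$-coefficients. Integrality is recovered because the relevant Chow group consists of top-dimensional cycles.
\item In (4), $\mathcal O(-1)=f^*T^*Y$, not $\mathcal O(1)$.
\item In (5), the dimension hypothesis is immediate: $f_\circ SS(\mathcal F)$ lies in the zero section over $Y\setminus T$ and in the lines $T^*_yY$ for $y\in T$.
\end{itemize}
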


In view of the similarity of $cc^Z_{X/Y/k}(\mathcal F)$ to the cohomological
non-acyclicity class,
it is natural to formulate the following conjecture:
\begin{conjecture}[Conjecture \ref{conj:MFforNI}]\label{conj:intro-MFforNI} We have
	\begin{equation}\label{eq:MFfornonisolated}
\widetilde{C}^Z_{X/Y/k}(\mathcal F)=\widetilde{{\rm cl}}(cc^Z_{X/Y/k}(\mathcal F))\quad{\rm in}\quad H^0_Z(X,\mathcal K_{X/Y/k}),
\end{equation}
	where $\widetilde{{\rm cl}}: {\rm CH}_{0}(Z)\xrightarrow{\rm cl} H^0_Z(X,\mathcal K_{X/k})\xrightarrow{\eqref{intro:KXYK}} H^0_Z(X,\mathcal K_{X/Y/k})$ and ${\rm cl}$ is the cycle class map.
\end{conjecture}
We hope \eqref{eq:MFfornonisolated} gives a formulation of Question \ref{Que1} in some sense.

When $Z=X$, let
$\rho:H^0(X,\mathcal K_{X/k})\to H^0(X,\mathcal K_{X/Y/k})$
be the map induced by \eqref{intro:KXYK} and let $0_X:X\hookrightarrow T^*X$ be the
zero section. The conjecture becomes
\begin{equation}\label{eq:intro-case-ZX}
\rho\bigl({\rm cl}(0_X^!CC(\mathcal F))\bigr)
=\rho\bigl(C_{X/k}(\mathcal F)\bigr),
\end{equation}
which is the image of a conjectured formula by Saito
\cite[Conjecture~6.8(1) and Lemma~6.9(1)]{Sai17a} under $\rho$.
When $f={\rm id}_X$, $X$ is irreducible and
$U:=X\setminus Z\neq \varnothing$, the conjecture becomes
\begin{equation}\label{eq:intro-case-identity}
C_Z^0(\mathcal F)
={\rm cl}\!\left(
0_X^!\bigl(CC(\mathcal F)
-{\rm rank}(\mathcal F|_U)\cdot CC(\Lambda)\bigr)
\right)
\quad\text{in }H_Z^0(X,\mathcal K_{X/k}),
\end{equation}
where $C_Z^0(\mathcal F)$ is the localized characteristic class
of Abbes--Saito \cite[Definition~5.2.1]{AS07}, 
$\operatorname{rank}\mathcal F$ is the generic rank and the difference of characteristic cycles is supported
on $T^*X\times_XZ$.
In fact, by \cite[p.~163, immediately after Theorem~5.1]{YZ22},
\begin{equation}
    \rho_Z:H^0_Z(X,\mathcal K_{X/k})\longrightarrow H^0_Z(X,\mathcal K_{X/X/k})
\end{equation}
is an isomorphism in this case, and we have $C_Z^0(\mathcal F)=\widetilde{C}^Z_{X/X/k}(\mathcal F)$.


In Section~4, we prove Conjecture~\ref{conj:intro-MFforNI} in the quasi-affine case for
morphisms to the affine line over an algebraically closed field.
More precisely, we have the following theorem.

\begin{theorem}[Theorem~\ref{thmqafmil}]
\label{thmqafint}
Let $k$ be an algebraically closed field of characteristic $p>0$
and $\Lambda$ a finite local ring such that the characteristic of
its residue field is invertible in $k$. Let $X$ be a smooth
quasi-affine scheme of finite type over $k$, purely of dimension $d$.
Let $f:X\to\mathbb{A}^1_k=\Spec k[u]$ be a morphism and
$\mathcal{F}\in D_{\rm ctf}(X,\Lambda)$. Put
$Z=(df)^{-1}SS(\mathcal{F})$, where
\begin{equation}
    df=f^*du:X\rightarrow T^*\A^1_k\times_{\A^1_k} X\to T^*X
\end{equation}
is the section defined by the differential of $f$.
Then we have
\begin{equation}\label{eq:auto-001}
\widetilde{C}^{Z}_{X/\mathbb{A}^1_k/k}(\mathcal{F})
=
\widetilde{\operatorname{cl}}
\bigl(df^!CC(\mathcal{F})\bigr)
\quad\text{in }
H^0_Z\bigl(X,\mathcal{K}_{X/\mathbb{A}^1_k/k}\bigr).
\end{equation}
\end{theorem}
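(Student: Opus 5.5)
We will reduce the statement to known global formulas by embedding into a proper setting, and compare both sides via the fibration formula. Since $X$ is quasi-affine, choose an open immersion $j:X\hookrightarrow \overline X$ into a projective scheme, and use the map $f$ to $\mathbb A^1_k$. Both sides lie in $H^0_Z(X,\mathcal K_{X/\mathbb A^1_k/k})$. The plan is to show their difference vanishes by testing against enough classes and using a purity/injectivity argument. On the geometric side, the fibration formula \eqref{eq:intro-geometricNA-fibration} (with $S=\Spec k$, $Y=\mathbb A^1_k$, $r=1$) says $cc_{X/k}(\mathcal F)=c_1(f^*\Omega^{1,\vee}_{\mathbb A^1})\cap \overline{cc}_{X/\mathbb A^1}(\mathcal F)+\tau_*cc^Z$. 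Since $\Omega^1_{\mathbb A^1}$ is trivial, the first term vanishes and $\tau_*cc^Z_{X/\mathbb A^1/k}(\mathcal F)=df^!CC(\mathcal F)$ up to sign, which identifies the right-hand side with $\widetilde{\rm cl}(df^!CC(\mathcal F))$. On the cohomological side, \cite{YZ22} gives the analogous fibration formula relating $\widetilde C^Z_{X/\mathbb A^1/k}(\mathcal F)$ to the characteristic class $C_{X/k}(\mathcal F)$ and the relative class, again with vanishing correction term because $f^*T^*\mathbb A^1$ is trivial.

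The key steps, in order, are as follows.
\begin{enumerate}
\item Use the quasi-affine hypothesis to find, for the finitely many irreducible components of $Z$, functions $g_1,\dots,g_m$ on $X$ such that the deformations $f_t=f+tg$ (with $g$ a generic linear combination) have only isolated characteristic points with respect to $SS(\mathcal F)$, at least in a neighbourhood of $Z$. Quasi-affineness supplies enough global functions for a Bertini-type genericity argument, and algebraic closedness of $k$ is used to pick general closed points.
\item Consider the family $F:X\times\mathbb A^1_t\to\mathbb A^1\times\mathbb A^1_t$, $(x,t)\mapsto (f(x)+tg(x),t)$, with total non-acyclicity locus $\mathcal Z\supset Z\times\{0\}$. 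Apply the pullback compatibility (Proposition~\ref{prop:pbGNA}) on the geometric side and the base change property of the non-acyclicity class from \cite{YZ22} on the cohomological side, to both the fibre $t=0$ and the general fibre $t\neq0$. The classes then form specializations of a single class on $\mathcal Z$.
\item For $t\neq0$ the locus is finite, so the conjecture follows from Saito's Milnor formula \eqref{introSaiM} combined with item (4) of Theorem~\ref{thm:intro-geometricNA-properties} and the corresponding computation $\widetilde C^{\{x\}}=-\operatorname{dimtot}R\Phi_{\bar x}\cdot{\rm cl}[x]$ from \cite{YZ22}.
\item Specialize from $t\neq0$ to $t=0$ using the specialization maps on Chow groups and on $H^0_{\mathcal Z}$, which are compatible with the cycle class map.
\end{enumerate}

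The main obstacle is step (4). The specialization must be shown to be compatible with the class $\widetilde C$, i.e.\ that $\widetilde C^{\mathcal Z}$ for the family restricts correctly to both fibres. This also requires controlling $H^0_{\mathcal Z}(X\times\mathbb A^1,\mathcal K)$, since $\mathcal Z$ is flat over $\mathbb A^1_t$ only after shrinking. The first thing I would try is to construct the family class on $X\times\mathbb A^1_t$ directly, using that $X\times\mathbb A^1_t\to\mathbb A^1\times\mathbb A^1_t$ is a morphism of smooth schemes with $S=\mathbb A^1_t$ smooth of dimension $1$. I would then invoke the pullback statement (2) along the two points $\{0\},\{t\}\to\mathbb A^1_t$ on both sides, and use homotopy invariance ($\mathbb A^1$-invariance of the relevant cohomology with supports) to identify the two fibres' classes in a common target. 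The quasi-affine assumption is needed again there to ensure the needed transversality, namely properly $SS$-transversal pullback, for the genericity in step (1).
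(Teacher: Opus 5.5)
\textbf{There is a genuine gap at the step you yourself flag as the main obstacle: getting back from the perturbed fibre to $u=0$.} Your skeleton is the paper's: use quasi-affineness to perturb $f$ to $f+ug$ with $g$ generic, so that the perturbed fibre has only isolated characteristic points; apply Saito's and the cohomological isolated Milnor formulas there; then return to $u=0$. Nothing you propose carries out that last step.

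\emph{Homotopy invariance does not work with supports.} The two fibre classes lie in $H^0_{Z}(X,\mathcal K_{X/\A^1_k/k})$ and $H^0_{\mathcal Z_{t_1}}(X,\mathcal K_{X/\A^1_k/k})$, which have different supports. Since $\mathcal Z$ is not of the form $Z\times\A^1$, $\A^1$-invariance says nothing about $H^0_{\mathcal Z}$. The only common targets are groups with no supports or larger supports. Forgetting supports is far from injective: $H^0(\A^1_k,\mathcal K_{\A^1_k/k})=0$, while $H^0_Z\neq 0$ for any finite nonempty $Z$. So passing to such a group discards exactly the content of the theorem.

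\emph{The first paragraph has the same defect.} Both fibration formulas are equalities in ${\rm CH}_0(X)$ and $H^0(X,\mathcal K_{X/k})$, and the cohomological one also needs $H^0(Z,\mathcal K_{Z/Y})=H^1(Z,\mathcal K_{Z/Y})=0$. No purity argument lifts them to $H^0_Z$. The identification $cc^Z_{X/\A^1_k/k}(\mathcal F)=df^!CC(\mathcal F)$ in ${\rm CH}_0(Z)$ is simply the definition (Proposition~\ref{prop:geoNA-affine-line}).

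\emph{Specializing between closed points is not meaningful.} There is no specialization from a closed point $t_1\neq 0$ to $0$; specialization goes from the generic fibre of a trait to its special fibre.

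\textbf{What the paper does instead.} It works over the strict henselization $S$ of $k[u]_{(u)}$ and combines four ingredients:
\begin{enumerate}
\item the \'etale specialization map $\spet\colon H^0_{\mathcal W_\eta}(\mathcal X_\eta,\mathcal K_{\mathcal X_\eta/\mathcal Y_\eta/\eta})\to H^0_Z(X,\mathcal K_{X/\A^1_k/k})$, built from nearby cycles;
\item the compatibility of $\widetilde C$ with $\spet$, from \cite{YZ22};
\item Fulton's specialization of refined Gysin maps over a DVR;
\item a new theorem that the supported cycle class map commutes with specialization.
\end{enumerate}
The last is proved using the SGA~$4\frac12$ relative cycle class of the flat closure and the identity $\spet(j_X^*\gamma)=i_X^*\gamma$ for classes $\gamma$ on the total space. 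An identity of this kind, not homotopy invariance, is what lets a family class on $\mathcal X$ control the special fibre.

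\textbf{A second problem your plan never meets.} At the generic point the field $K$ is not perfect, so $CC$ is not available on $\mathcal X_\eta$. The paper proves that $SS$ and $CC$ commute with base change to $K^{\rm perf}$, by spreading out, constructibility and flatness of the two Milnor functions, and uniqueness of $CC$. It then applies the isolated formulas over $K^{\rm perf}$ and descends to $K$ along the universal homeomorphism $\Spec K^{\rm perf}\to\Spec K$.

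Working with closed fibres $t_1$ does avoid the perfectness issue. But you would then need two further steps, neither of which is in your proposal:
\begin{enumerate}
\item show that vanishing of the difference class on general closed fibres forces vanishing on the generic fibre. This is a local-constancy statement for $H^0_{\mathcal Z_U}$ over a dense open $U$ on which $\mathcal Z_U$ is finite.
\item specialize from the generic fibre to $u=0$ as above.
\end{enumerate}
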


The quasi-affineness of $X$ allows us to choose a global function $g$
such that the generic member of the family $f+ug$ has only isolated
characteristic points (Proposition~\ref{qafeq} and Lemma~\ref{genpert}).
After passing to the perfect
closure of the generic point of $\A^1_k$, the desired equality follows from
Saito's Milnor formula \eqref{eq:SMF} and the cohomological Milnor formula \eqref{eq:CMF}.
We then descend this equality to the generic point and specialize
to $u=0$. For this, we prove that the supported cycle class map is
compatible with specialization (Theorem~\ref{spcycommthm}).
Then theorem~\ref{thmqafint} follows from
the specialization formulas for refined Gysin pullbacks and
non-acyclicity classes.
\subsection*{Acknowledgments} 
The authors would like to thank Xiangyu Pan and Yigeng Zhao for helpful discussions.
This work was partially supported by the National Key R{\&}D Program of China (Grant No. 2021YFA1001400), NSFC Grant No. 12522101 and NSFC Grant No. 12271006.
The third author is supported by Beijing Municipal Natural Science Foundation (Grant No. QY23005).
\subsection*{Notation and Conventions}
\begin{enumerate}
	\item Let $S$ be a Noetherian scheme and ${\rm Sch}_{S}$ the category of separated schemes of finite type over $S$. 
	\item Let $\Lambda$ be a Noetherian ring such that $m\Lambda=0$ for some integer $m$ invertible on $S$ unless otherwise stated explicitly.
	\item For any scheme $X\in {\rm Sch}_S$,
	we denote by $D_{\rm ctf}(X,\Lambda)$ the derived category of complexes of $\Lambda$-modules of finite Tor-dimension with constructible cohomology sheaves on $X$.
	\item  For any separated morphism $f:X\to Y$ in ${\rm Sch}_S$, we use the following notations
	\begin{equation}
\mathcal K_{X/Y}=Rf^!\Lambda,\quad D_{X/Y}(-)=R\mathcal Hom(-, \mathcal K_{X/Y}).
\end{equation}
	
	\item For $\mathcal F\in D_{\rm ctf}(X,\Lambda)$ and $\mathcal G\in D_{\rm ctf}(Y,\Lambda)$ on $S$-schemes $X$ and $Y$  respectively, $\mathcal F\boxtimes_S^L\mathcal G$ denotes ${\rm pr}_1^\ast\mathcal F\otimes^L{\rm pr}_2^\ast\mathcal G$ on $X\times_SY$.
	\item To simplify our notation, we omit $R$ or $L$ denoting the derived functors, unless stated explicitly or for $R\mathcal Hom$.
\end{enumerate}

\section{Transversality condition}
We recall the cohomological and geometric transversality conditions, which express a notion of smoothness of a morphism \(f\colon X\to S\) with respect to a constructible sheaf \(\mathcal F\) on \(X\).

\subsection{}\label{def:gtrans}
We recall the (cohomological) transversality condition introduced in \cite[\S2.1]{YZ22}, which is a relative version of the transversality condition studied by Saito \cite[Definition 8.5]{Sai17a}.
Consider the following Cartesian diagram in ${\rm Sch}_S$:
\begin{equation}\label{eq:carGtrans}
\begin{gathered}
		\xymatrix{
			X\ar@{}[rd]|\Box\ar[d]_p\ar[r]^-i&Y\ar[d]^-f\\
			W\ar[r]^-{\delta}&T.
		}
	\end{gathered}
\end{equation}
Let $\mathcal F\in D_{\rm ctf}(Y,\Lambda)$ and $\mathcal G\in D_{\rm ctf}(T,\Lambda)$. 
Let $c_{\delta,f,\mathcal F,\mathcal G}$ be the  composition 
\begin{equation}\label{eq:carGtrans1}
\begin{split}
		c_{\delta,f,\mathcal F,\mathcal G}:i^\ast \mathcal F\otimes^L p^\ast \delta^!\mathcal G&\xrightarrow{id\otimes {\rm b.c}}i^\ast \mathcal F\otimes^L i^! f^\ast\mathcal G \\
		&\xrightarrow{\rm adj}i^!i_!(i^\ast \mathcal F\otimes^L i^! f^\ast\mathcal G)\\
		&\xrightarrow[\simeq]{\rm proj.formula}i^!(\mathcal F\otimes^L i_!i^! f^\ast\mathcal G)\xrightarrow{\rm adj}i^!(\mathcal F\otimes^L f^\ast\mathcal G).
	\end{split}
\end{equation}
We put $c_{\delta,f,\mathcal F}:=c_{\delta,f,\mathcal F,\Lambda}: i^\ast\mathcal F\otimes^L p^\ast\delta^!\Lambda\to i^!\mathcal F$.
If $c_{\delta,f,\mathcal F}$ is an isomorphism, then we say that the morphism $\delta$ is {\it $\mathcal F$-transversal}.
If $c_{i,{\rm id},\mathcal F}$ is an isomorphism, then we say $i$ is $\mathcal F$-transversal.

The following lemma gives an equivalence characterization between transversality condition and (universally) locally acyclicity condition.
\begin{lemma}\label{lem:Trans=ULA}
	Let $f:X\to S$ be a separated morphism of finite type between Noetherian schemes and $\mathcal F\in D_{\rm ctf}(X,\Lambda)$. 
	The following conditions are equivalent:
	\begin{enumerate}
		\item The morphism $f$ is  locally acyclic relatively to $\mathcal F$.
		\item The morphism $f$ is universally locally acyclic relatively to $\mathcal F$.
		\item For any $\mathcal G\in D_{\rm ctf}(X,\Lambda)$, the canonical map
		\begin{equation}\label{eq:display-226}
D_{X/S}(\mathcal G)\boxtimes_S^L \mathcal F\xrightarrow{}R\mathcal Hom_{X\times_SX}({\rm pr}_1^\ast\mathcal G,{\rm pr}_2^!\mathcal F)
\end{equation}
		is an isomorphism in $D_{\rm ctf}(X\times_SX,\Lambda)$, where ${\rm pr}_1:X\times_SX\to X$ and ${\rm pr}_2:X\times_SX\to X$ are the projections.
		\item The canonical map
		\begin{equation}\label{eq:display-231}
D_{X/S}(\mathcal F)\boxtimes_S^L \mathcal F\xrightarrow{}R\mathcal Hom_{X\times_SX}({\rm pr}_1^\ast\mathcal F,{\rm pr}_2^!\mathcal F)
\end{equation}
		is an isomorphism.
		\item For any Cartesian diagram 
		\begin{equation}\label{eq:deltaIbase}
\begin{gathered}
				\xymatrix{ Y\times_SX\ar[d]_-{{\rm pr}_1} \ar[r]^-{{\rm pr}_2} & X  \ar[d]^-f \\
					Y\ar[r]_-{\delta} &S\ar@{}|\Box[ul],
				}
			\end{gathered}
\end{equation}
		such that all schemes are Noetherian and $\delta$ is separated of finite type, the morphism $\delta$ is $\mathcal F$-transversal.
		\item For any Cartesian diagram
		\eqref{eq:deltaIbase} and any $\mathcal G\in D_{\rm ctf}(S,\Lambda)$, the morphism $c_{\delta,f,\mathcal F,\mathcal G}$ is an isomorphism.
		\item For any Cartesian diagram 
		\begin{equation}\label{eq:deltaIbase2}
\begin{gathered}
				\xymatrix{ 
					Y\times_SX\ar[d]_-{{\rm pr}_1} \ar[r]^-{{\rm pr}_2} \ar@{}|\Box[rd]& X'  \ar[d]^-{f'}\ar[r]\ar@{}|\Box[rd]&X\ar[d]^-f \\
					Y\ar[r]_-{\delta} &S'\ar[r]&S,
				}
			\end{gathered}
\end{equation}
        such that all schemes are Noetherian and $\delta$ is separated of finite type, the morphism $\delta$ is $\mathcal F|_{X'}$-transversal.
		\item For any Cartesian diagram
		\eqref{eq:deltaIbase2} and any $\mathcal G\in D_{\rm ctf}(S',\Lambda)$, the morphism $c_{\delta,f',\mathcal F|_{X'},\mathcal G}$ is an isomorphism.
	\end{enumerate}
\end{lemma}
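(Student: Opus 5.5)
The plan is to prove Lemma~\ref{lem:Trans=ULA} as a cycle of implications. The two deep inputs are imported from the literature, and everything else is formal manipulation of the six functors.

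\textbf{The deep inputs.} First, (1)$\Leftrightarrow$(2) is Gabber's theorem that local acyclicity implies universal local acyclicity for morphisms of finite type between Noetherian schemes with torsion coefficients invertible on $S$ (as written up by Lu--Zheng, using Gabber's finiteness). Second, (2)$\Leftrightarrow$(4)$\Leftrightarrow$(3) is the Lu--Zheng characterization of universal local acyclicity as dualizability in the symmetric monoidal $2$-category of cohomological correspondences over $S$. There, (4) says that the canonical map for $\mathcal G=\mathcal F$ is an isomorphism. The implication (4)$\Rightarrow$(3) follows because dualizability of $\mathcal F$ gives the isomorphism for every $\mathcal G$. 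The implication (3)$\Rightarrow$(4) is trivial.

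\textbf{From (2) to the transversality statements.} For (2)$\Rightarrow$(8), take the base change $X'\to S'$, which is again ULA for $\mathcal F|_{X'}$, so it suffices to treat the diagram \eqref{eq:deltaIbase} with $S'=S$. We must show that $c_{\delta,f,\mathcal F,\mathcal G}$ is an isomorphism.
\begin{itemize}
\item Compactify $\delta$ Nagata-style as $\delta=\bar\delta\circ j$, with $j$ an open immersion and $\bar\delta$ proper. For $j$ the map is trivially an isomorphism since $j^!=j^*$. Transversality is stable under composition, by the functoriality of \eqref{eq:carGtrans1} in compositions of Cartesian squares.
\item This reduces us to $\delta$ proper. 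By Lu--Zheng, dualizability gives that $\mathrm{pr}_2^*\mathcal F\otimes\mathrm{pr}_1^*(-)$ commutes with $!$-pullback. Concretely, the ULA property yields the K\"unneth-type isomorphism $\mathcal F\boxtimes_S D_{Y/S}(\mathcal H)\simeq R\mathcal Hom(\mathrm{pr}^*\mathcal H,\mathrm{pr}^!\mathcal F)$.
\item Specializing to $\mathcal H=\Lambda$ gives $\mathrm{pr}_1^*\delta^!\Lambda\otimes \mathrm{pr}_2^*\mathcal F\simeq \mathrm{pr}_2^!\mathcal F$. Twisting by $\mathcal G$ via the projection formula gives the general $c_{\delta,f,\mathcal F,\mathcal G}$.
\end{itemize}
The implications (8)$\Rightarrow$(7), (8)$\Rightarrow$(6) and (6)$\Rightarrow$(5) are trivial specializations, taking $S'=S$ or $\mathcal G=\Lambda$.

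\textbf{Closing the cycle.} It remains to prove (5)$\Rightarrow$(4); the same argument gives (7)$\Rightarrow$(4). Apply (5) with $Y=X$ and $\delta=f$, so that $Y\times_SX=X\times_SX$ and $\mathrm{pr}_1^*f^!\Lambda\otimes\mathrm{pr}_2^*\mathcal F\simeq \mathrm{pr}_1^!\mathcal F$ (up to swapping factors). Then compute
\[
R\mathcal Hom(\mathrm{pr}_1^*\mathcal F,\mathrm{pr}_2^!\mathcal F)\simeq R\mathcal Hom(\mathrm{pr}_1^*\mathcal F,\mathrm{pr}_1^*\mathcal K_{X/S}\otimes\mathrm{pr}_2^*\mathcal F).
\]
To identify the right-hand side with $D_{X/S}(\mathcal F)\boxtimes\mathcal F$, we need to pull the tensor factor $\mathrm{pr}_2^*\mathcal F$ out of the $R\mathcal Hom$. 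Here we use transversality once more, now for $\mathcal G$-twisted pullbacks along the morphism $X\times_SX\to X$, which (5) also gives after the standard d\'evissage. Concretely, write $D_{X/S}(\mathcal F)\boxtimes\mathcal F$ as $D_{X\times_S X/X}(\mathrm{pr}_1^*\mathcal F)\otimes\mathrm{pr}_2^*\mathcal F$ and compare the two sides via base change along $\mathrm{pr}_2$.

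\textbf{Main obstacle.} We expect the main obstacle to be this last step, (5)$\Rightarrow$(4), because the transversality of the single square must be upgraded to the K\"unneth isomorphism. If the direct computation fails, we will instead deduce (5)$\Rightarrow$(1) by citing Saito/Yang--Zhao's argument: (5) applied to $\delta$ ranging over strict henselizations of curves recovers the vanishing of the vanishing cycles via the specialization description. Since $\delta$ is only allowed to be of finite type in (5), this route may also require a limit argument.
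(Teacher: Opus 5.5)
\textbf{The step you flagged, (5)$\Rightarrow$(4), is a genuine gap, and it is the only non-formal implication in the lemma.} Your treatment of (1)$\Leftrightarrow$(2)$\Leftrightarrow$(3)$\Leftrightarrow$(4) matches the paper's citations of Gabber and Lu--Zheng.

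The instance of (5) you start from ($Y=X$, $\delta=f$) carries no information by itself. For $X=S$ and $f=\mathrm{id}_S$ it reads $\mathcal F\xrightarrow{=}\mathcal F$. Yet $\mathrm{id}_S$ is not locally acyclic relatively to the extension by zero of $\Lambda$ from the generic point of a trait. So everything rests on ``pulling $\mathrm{pr}_2^*\mathcal F$ out of the $R\mathcal Hom$'', and the tools you name for this are not available:
\begin{itemize}
\item The identification $D_{X/S}(\mathcal F)\boxtimes_S\mathcal F\simeq D_{X\times_SX/X}(\mathrm{pr}_1^*\mathcal F)\otimes\mathrm{pr}_2^*\mathcal F$ rests on $\mathrm{pr}_1^*D_{X/S}(\mathcal F)\simeq R\mathcal Hom(\mathrm{pr}_1^*\mathcal F,\mathrm{pr}_2^!\Lambda)$. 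That is compatibility of the relative dual of $\mathcal F$ with base change along $f$, a property one derives \emph{from} universal local acyclicity, so using it is circular.
\item Getting the $\mathcal G$-twisted maps $c_{\delta,f,\mathcal F,\mathcal G}$ from (5) ``after the standard d\'evissage'' is the implication (5)$\Rightarrow$(6). This is not formal, since $\delta^!$ has no base change against $j_!$; the paper reaches (6) only through (2).
\item The fallback through strict henselizations is not permitted by (5), which only allows $\delta$ of finite type. You also supply neither the limit argument nor a reduction of local acyclicity over a general Noetherian base to traits.
\end{itemize}

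\textbf{What is missing} is testing the K\"unneth map on \emph{all} pairs $(Y,\mathcal H)$ rather than only on $(X,\mathcal F)$. This is exactly where the quantification over all $\delta$ in (5) enters.
\begin{itemize}
\item By \cite[Theorem~2.16]{LZ22} it suffices that $D_{Y/S}(\mathcal H)\boxtimes_S\mathcal F\to R\mathcal Hom(\mathrm{pr}_1^*\mathcal H,\mathrm{pr}_2^!\mathcal F)$ be an isomorphism for every $Y\to S$ separated of finite type and every $\mathcal H\in D_{\rm ctf}(Y,\Lambda)$.
\item Both sides are triangulated in $\mathcal H$, so by \cite[Expos\'e~IX, Proposition~2.7]{SGA4} one may take $\mathcal H=j_!\Lambda$ with $j\colon U\to Y$ \'etale and $U$ affine. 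Then $D_{Y/S}(\mathcal H)=j_*\mathcal K_{U/S}$.
\item The map factors as $j_*\mathcal K_{U/S}\boxtimes_S\mathcal F\to(j\times\mathrm{id})_*(\mathcal K_{U/S}\boxtimes_S\mathcal F)\to(j\times\mathrm{id})_*\mathrm{pr}_2^!\mathcal F$. The second arrow is (5) for $U\to S$.
\item For the first arrow, factor $j$ as an open immersion $u$ followed by a proper morphism. The proper part commutes with $-\boxtimes_S\mathcal F$ by proper base change and the projection formula. The open part is handled by the localization triangle $\tau_!\tau^!\to\mathrm{id}\to u_*u^*$, together with (5) applied both to the ambient scheme and to its closed complement $\tau$ over $S$.
\end{itemize}

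Separately, in your (2)$\Rightarrow$(8), the twist by $\mathcal G$ is not a projection-formula manipulation, since $\delta^!\mathcal G\not\simeq\delta^!\Lambda\otimes\delta^*\mathcal G$ in general. Your Nagata reduction there is also never used. The paper instead cites \cite[Proposition~2.3(2)]{YZ22} for (2)$\Rightarrow$(6) and obtains (8) by base change.
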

When $S$ is a scheme of finite type over a field $k$, then the equivalence between (2) and (7) follows from \cite[Propositions~2.3(2) and 2.4]{YZ22}. In this case, we may require $Y$ and $S'$ smooth over $k$ in (7).
\begin{proof}
	By a result of Gabber \cite[Corollary 6.6]{LZ19}, (1) and (2) are equivalent.
	The equivalence between (2), (3), and (4) follows from \cite[Proposition~2.5, Lemma~2.14, and Theorem~2.16]{LZ22}.
	By \cite[Proposition~2.3(2)]{YZ22}, (2) implies (6). It is clear that (6) implies (5).
	
	Now we show (5) implies (1).
 By \cite[Theorem 2.16]{LZ22}, it suffices to show that for any  $\mathcal H\in D_{\rm ctf}(Y,\Lambda)$, the canonical morphism 
	\begin{equation}\label{eq:kunneth}
D_{Y/S}(\mathcal H)\boxtimes_S^L \mathcal F\xrightarrow{}R\mathcal Hom({\rm pr}_1^\ast\mathcal H,{\rm pr}_2^!\mathcal F)
\end{equation}
	is an isomorphism. 
    Using a standard d\'evissage argument \cite[Expos\'e~IX, Proposition~2.7]{SGA4}, we may assume $\mathcal H=j_!\Lambda$ for $j: U\to Y$ \'etale with $U$ affine. 
    In this case, \eqref{eq:kunneth} is the following composition
	\begin{equation}\label{eq:display-277}
j_\ast j^\ast\mathcal K_{Y/S}\boxtimes_S^L \mathcal F\xrightarrow{}(j\times{\rm id})_\ast( j^\ast\mathcal K_{Y/S}\boxtimes_S^L \mathcal F)\xrightarrow[\simeq]{\eqref{eq:kun0}} (j\times {\rm id})_\ast (j\times {\rm id})^\ast {\rm pr}_2^!\mathcal F.
\end{equation}
    In the following we show that 
    \begin{equation}\label{eq:dualsff}
j_\ast j^\ast \mathcal K_{Y/S}\boxtimes^L_S \mathcal F\xrightarrow{}(j\times{\rm id})_\ast (j^\ast \mathcal K_{Y/S}\boxtimes^L_S \mathcal F)
\end{equation}
    is an isomorphism. We write $j$ as a composition of a proper morphism and an open immersion.

	Since $\delta$ is $\mathcal F$-transversal, we have an isomorphism
	\begin{equation}\label{eq:kun0}
\mathcal K_{Y/S}\boxtimes_S^L \mathcal F\xrightarrow{\simeq}{\rm pr}_2^!\mathcal F.
\end{equation}
By the projection formula and proper base change, for any proper morphism $g: Y'\to Y$ and $A \in D_{\rm ctf}(Y',\Lambda)$, the following canonical morphism
	\begin{equation}\label{eq:kun0-0}
g_! A\boxtimes_S^L \mathcal F\xrightarrow{\simeq} (g\times {\rm id})_!(A\boxtimes_S^L \mathcal F)
\end{equation}
	is an isomorphism. If $u: Y'\to Y$ is an open immersion with complementary closed immersion $\tau: Z=Y\setminus Y'\to Y$, we have a commutative diagram between distinguished triangles
	\begin{equation}\label{eq:display-297}
\begin{gathered}
			\xymatrix{
				\tau_! \tau^! \mathcal K_{Y/S}\boxtimes_S^L \mathcal F\ar[r]\ar[d]^-\simeq_-{\eqref{eq:kun0-0}} & \mathcal K_{Y/S}\boxtimes_S^L \mathcal F\ar[r]\ar[d]^-\simeq&u_\ast u^\ast \mathcal K_{Y/S}\boxtimes_S^L \mathcal F\ar[d]\ar[r]^-{+1}&\\
				(\tau\times {\rm id})_!(\tau^! \mathcal K_{Y/S}\boxtimes_S^L \mathcal F)\ar[r]\ar[d]^-\simeq_-{\eqref{eq:kun0}}&\mathcal K_{Y/S}\boxtimes_S^L \mathcal F\ar[r]\ar@{=}[d]&(u\times {\rm id})_\ast(u^\ast \mathcal K_{Y/S}\boxtimes_S^L \mathcal F)\ar[d]^-\simeq\ar[r]^-{+1}&\\
				(\tau\times {\rm id})_!(\tau\times {\rm id})^!( \mathcal K_{Y/S}\boxtimes_S^L \mathcal F)\ar[r]&\mathcal K_{Y/S}\boxtimes_S^L \mathcal F\ar[r]&(u\times {\rm id})_\ast(u\times{\rm id})^\ast (\mathcal K_{Y/S}\boxtimes_S^L \mathcal F)\ar[r]^-{+1}&.
			}
		\end{gathered}
\end{equation}
	Thus 
	\begin{equation}\label{eq:openkunnpf}
u_\ast u^\ast \mathcal K_{Y/S}\boxtimes_S^L \mathcal F\to (u\times {\rm id})_\ast(u^\ast \mathcal K_{Y/S}\boxtimes_S^L \mathcal F)
\end{equation}
 is an isomorphism.
 Applying \eqref{eq:openkunnpf} and \eqref{eq:kun0-0}, we obtain that \eqref{eq:dualsff} is an isomorphism, and the conclusion follows.
	
	We show (2) implies (8). By assumption, $f'$ is also universally locally acyclic relatively to $\mathcal F|_{X'}$. Thus by (6), the morphism $c_{\delta,f',\mathcal F|_{X'},\mathcal G}$ is an isomorphism.
	
	Finally, it is clear that (8) implies (7) and (7) implies (5).
\end{proof}

\subsection{}\label{def:transdef00}
Now we recall the geometric transversality condition (cf. \cite[1.2]{Bei16} and \cite[Definitions~3.3, 3.5, and 7.1]{Sai17a}).
Let $X$ be a smooth scheme over a field $k$. Let $C$ be a conical closed subset of $T^{\ast}X$, i.e., a closed subset which is stable under the action of the multiplicative group $\mathbb{G}_m$. 	
We denote by $T^{\ast}_XX\subseteq T^{\ast} X$ the zero section of the cotangent bundle $T^{\ast} X$ of $X$.

	\begin{enumerate}
		\item  Let $h\colon W\to X$ be a morphism from a smooth scheme $W$ over $k$. 
		We say that $h$  is $C$-transversal  if the intersection 
		$(C\times_X W)\cap dh^{-1}(T^{\ast}_WW)$ is contained in the zero-section $ T^{\ast}_XX\times_X W\subseteq T^{\ast} X\times_X W$,
		where $dh\colon T^{\ast}X\times_XW\to T^{\ast}W$ is the canonical map.
		
		If $h$ is $C$-transversal,
		we define $h^\circ C$ to be the image of $C\times_XW$ under the map $dh\colon T^{\ast}X\times_XW\to T^{\ast}W$.
		By \cite[Lemma 3.1]{Sai17a}, $h^\circ C$ is a closed conical subset of $T^{\ast} W$.
	
		\item  Assume that $X$ and $C$ are purely of dimension $d$ and 
		that $W$ is purely of dimension $m$.
		We say that a $C$-transversal map $h\colon W\to X$ is properly $C$-transversal if every irreducible component of $C\times_XW$ is of dimension $m$. 
		
		\item  We say that a morphism $f\colon X\to Y$ to a smooth scheme $Y$ over $k$ is $C$-transversal  if the inverse image $df^{-1}(C)$ is contained in the zero-section $T^{\ast} _YY\times_YX\subseteq T^{\ast} Y\times_Y X$, where $df\colon T^{\ast} Y\times_Y X\rightarrow T^{\ast} X$ is the canonical map.

        If $f$ is proper over the base
$C\cap T_X^*X$ of $C$,
we define $f_\circ C$ to be the image of $(df)^{-1}(C)$ under the projection
${\rm pr}:T^*Y\times_YX\to T^*Y$.
In this case, $f_\circ C$ is also a closed conical subset of $T^*Y$.
	\end{enumerate}
The cohomological transversality condition and geometric transversality condition are related as follows. Let $X$ be a smooth scheme of pure dimension $d$ and $\Lambda$ a finite local ring such that the characteristic $\ell$ of the residue field of $\Lambda$ is invertible in $k$. Let $\mathcal F\in D_{\rm ctf}(X,\Lambda)$. The singular support $SS(\mathcal F)$ defined by Beilinson \cite{Bei16} is a $d$-dimensional conical closed subset of $T^{\ast} X$. We have the following properties:
\begin{enumerate}
	\item $SS(\mathcal F)\cap T^{\ast}_XX={\rm supp}(\mathcal F)$.
	\item Let $f: X\to Y$ be a morphism to a smooth scheme $Y$ over $k$. If $f$ is $SS(\mathcal F)$-transversal, then $f$ is universally locally acyclic relatively to $\mathcal F$.
	\item Let $f: W\to X$ be a morphism from a smooth scheme $W$ over $k$. If $f$ is $SS(\mathcal F)$-transversal, then $f$ is $\mathcal F$-transversal.
\end{enumerate}
Now assume $k$ is a perfect field. By \cite[Theorem 5.9 and Theorem 5.18]{Sai17a}, the characteristic cycle $CC(\mathcal F)$ is the unique $d$-cycle $CC(\mathcal F)$ supported on $SS(\mathcal F)$ with $\mathbb Z$-coefficients such that $CC(\mathcal F)$ satisfies the Milnor formula \eqref{introSaiM} for $\mathcal F$.

\section{Geometric non-acyclicity classes}
\subsection{}\label{subsec:defC}
Let $S$ be a Noetherian scheme and  $\Lambda$ a Noetherian ring such that $m\Lambda=0$ for some integer $m$ invertible on $S$.
Consider the following Cartesian diagram in ${\rm Sch}_S$
\begin{equation}\label{diagXSY}
\begin{gathered}
		\xymatrix{
			X\times_SY\ar[d]_-{\rm pr_2}\ar[r]^-{\rm pr_1}\ar@{}|\Box[rd]&X\ar[d]^-h\\
			Y\ar[r]^-g&S,	
		}
	\end{gathered}
\end{equation}
where ${\rm pr}_1$ and ${\rm pr}_2$ are the projections.
For any $\mathcal F\in D_{\rm ctf}(X,\Lambda)$ and $\mathcal G\in D_{\rm ctf}(Y,\Lambda)$, 
we have canonical morphisms
\begin{equation}\label{eq:cForFBoxK}
\begin{aligned}
&\mathcal F\boxtimes_S^L \mathcal K_{Y/S}={\rm pr}^\ast_1\mathcal F\otimes^L{\rm pr}_2^\ast g^!\Lambda\xrightarrow{c_{g,h,\mathcal F}}{\rm pr}_1^!\mathcal F,
\end{aligned}
\end{equation}
\begin{equation}\label{eq:Kunethiso}
\begin{aligned}
&
	\mathcal F\boxtimes_S^{L}D_{Y/S}(\mathcal G)\rightarrow R\mathcal Hom_{X\times_SY}({\rm pr}^*_2\mathcal G, {\rm pr}^!_1\mathcal F),
\end{aligned}
\end{equation}
where \eqref{eq:Kunethiso} is adjoint to 
\begin{equation}\label{Def-eq:Kunethiso}
\mathcal F\boxtimes_S^{L}(D_{Y/S}(\mathcal G)\otimes^L \mathcal G)\xrightarrow{id\boxtimes{\rm ev}} \mathcal F\boxtimes_S^L \mathcal K_{Y/S}\xrightarrow{\eqref{eq:cForFBoxK}}{\rm pr}_1^!\mathcal F.
\end{equation}
Note that \eqref{eq:cForFBoxK} is a special case of \eqref{eq:Kunethiso} by taking $\mathcal G=\Lambda$. 
If moreover $X\to S$ is universally locally acyclic relatively to  $\mathcal F$, then
\eqref{eq:Kunethiso} is an isomorphism by  \eqref{eq:kunneth}.
For a morphism $c=(c_1,c_2):C\to X\times_S Y$, we have a  canonical isomorphism by \cite[Corollaire 3.1.12.2]{Del73}
\begin{equation}\label{eq:keyisoCshrinkRHOM}
R\mathcal Hom(c^*_2\mathcal G, c^!_1\mathcal F) \xrightarrow{\simeq}c^!R\mathcal Hom({\rm pr}^*_2\mathcal G, {\rm pr}^!_1\mathcal F).
\end{equation}
\subsection{}\label{subsec:notationforXYS}
Consider a commutative diagram in ${\rm Sch}_S$:
\begin{equation}\label{xyd:prop:Gysim:fgh2}
\begin{gathered}
		\xymatrix{
			Z\ar@{^(->}[r]^-\tau&X\ar[rr]^-f\ar[rd]_-h&&Y,\ar[ld]^-g\\
			&&S&
		}
	\end{gathered}
\end{equation}
where $\tau: Z\to X$ is a closed immersion and $g$ is a smooth morphism. 
Let $i: X\times_Y X\to X\times_S X$ be the base change of the diagonal morphism $\delta:Y\to Y\times_SY$:
\begin{equation}\label{eq:propchangebasehomo:1}
\begin{gathered}
		\xymatrix{
			X\ar@/_2pc/[dd]_-{f}\ar@{_(->}[d]_-{\delta_1}\ar@{}[rd]|\Box\ar@{=}[r]&X\ar@{_(->}[d]^-{\delta_0}\\
			X\times_Y X\ar[r]^-{i}\ar[d]_-p \ar[r] & X\times_S X  \ar[d]^-{f\times f} \\
			Y\ar[r]^-\delta&Y\times_SY,\ar@{}|\Box[ul]
		}
	\end{gathered}
\end{equation}
where $ \delta_0$ and $\delta_1$ are the diagonal morphisms. Put
$\mathcal K_{X/Y/S}:=\delta^\Delta\mathcal K_{X/S}\simeq \delta_1^\ast \delta^\Delta\delta_{0\ast}\mathcal K_{X/{S}}$. 
We have the following distinguished triangle, see \cite[(4.6)]{YZ22}:
\begin{equation}\label{eq:distriK}
\mathcal K_{X/{Y}}\to \mathcal K_{X/S}\to \mathcal K_{X/Y/S}\xrightarrow{+1}.
\end{equation}
Let $\mathcal F\in D_{\rm ctf}(X,\Lambda)$ such that 
$X\setminus Z\to Y$ is universally locally acyclic relatively 
to $\mathcal F|_{X\setminus Z}$ and that $h:X\to S$  is universally locally acyclic  
relatively to $\mathcal F$.
We put
\begin{equation}\label{eq:display-421}
\mathcal H_S=R\mathcal Hom_{X\times_SX}({\rm pr}_2^\ast \mathcal F,{\rm pr}_1^!\mathcal F),\qquad \mathcal T_S=\mathcal F\boxtimes^L_S D_{X/S}(\mathcal F).
\end{equation}
The relative cohomological characteristic class $C_{X/S}(\mathcal F)$ is
the composition (cf. \cite[(3.49)]{YZ22})
\begin{equation}\label{eq:defCharClassInfinity}
\Lambda\xrightarrow{\rm id}R\mathcal Hom(\mathcal F,\mathcal F)\xrightarrow[\simeq]{\eqref{eq:keyisoCshrinkRHOM}}\delta_0^! \mathcal H_S\xleftarrow[\simeq]{\eqref{eq:Kunethiso}}\delta_0^!\mathcal T_S
	\xrightarrow{}\delta_0^\ast \mathcal T_S \xrightarrow{\rm ev} \mathcal K_{X/S}.
\end{equation}
By the assumption on $\mathcal F$, $\delta_1^\ast \delta^\Delta\mathcal T_S$ is supported on $Z$ by \cite[\S4.3]{YZ22}.
The non-acyclicity class $\widetilde{C}^Z_{X/Y/S}(\mathcal F)$ is the composition (cf. \cite[Definition~4.2]{YZ22})
\begin{equation}\label{eq:priciseDefCZXYS}
\Lambda\to \delta^!_0 \mathcal H_S\xleftarrow{\simeq}\delta_0^!\mathcal T_S
	\simeq \delta_1^!i^!\mathcal T_S\xrightarrow{}\delta^\ast_1 i^!\mathcal T_S\to \delta_1^\ast \delta^\Delta\mathcal T_S\xleftarrow{\simeq} \tau_\ast\tau^! \delta_1^\ast \delta^\Delta\mathcal T_S\to \tau_\ast\tau^! \mathcal K_{X/Y/S}.
\end{equation}
If  the following condition holds:
\begin{equation}\label{eq:display-437}
H^0(Z,\mathcal K_{Z/{Y}})=0~{\rm and}~H^1(Z,\mathcal K_{Z/{Y}})=0
\end{equation}
then the map
\begin{equation}\label{eq:deltaf}
H^0_Z(X,\mathcal K_{X/{S}})
\xrightarrow{\eqref{eq:distriK}} H^0_Z(X,\mathcal K_{X/Y/{S}})
\end{equation}
is an isomorphism. In this case, the class 
$\widetilde{C}_{X/Y/S}^{Z}(\mathcal F)\in H_Z^0(X,\mathcal K_{X/Y/{S}})$
defines an element of $H^0_Z(X,\mathcal K_{X/{S}})$, which is  denoted by
$C_{X/Y/S}^{Z}(\mathcal F)$.
Now we summarize the functorial properties for the non-acyclicity classes.
\begin{proposition}[{\cite[Theorem~1.5, Proposition~1.6,
  Theorem~1.7 Theorem~1.8 and Proposition~4.7]{YZ22}}]
\label{NAprop}
	Let us denote the diagram \eqref{xyd:prop:Gysim:fgh2} by $\Delta=\Delta^Z_{X/Y/S}$ 
	and  $\widetilde{C}_{X/Y/S}^{Z}(\mathcal F)$, $C_{X/Y/S}^{Z}(\mathcal F)$ by $ \widetilde{C}_{\Delta}(\mathcal F)$, $ C_{\Delta}(\mathcal F)$, respectively. 
	Let $\mathcal F\in D_{\rm ctf}(X,\Lambda)$. 
	Assume that $Y\to S$ is smooth of pure relative dimension $r$, $X\setminus Z\to Y$ is universally locally acyclic relatively to $\mathcal F|_{X\setminus Z}$ and that $X\to S$ is universally locally acyclic relatively to $\mathcal F$.
	\begin{enumerate}
		\item $($Fibration formula$)$ 	If $H^0(Z,\mathcal K_{Z/{Y}})=H^1(Z,\mathcal K_{Z/{Y}})=0$,
		then we have
		\begin{equation}\label{eq:fibCNA}
C_{X/S}(\mathcal F)= c_{r}(f^*\Omega_{Y/S}^{1,\vee})\cap C_{X/Y}(\mathcal F)+
			C_{\Delta}(\mathcal F) \quad {\rm in }\quad H^{0}(X,\mathcal K_{X/S}).
\end{equation}
		\item $($Pullback$)$
		Let $b: S'\to S$ be a morphism of Noetherian schemes. Let $\Delta'=\Delta_{X'/Y'/S'}^{Z'}$ be the base change of $\Delta=\Delta^Z_{X/Y/S}$ by $b: S'\to S$. Let $b_X:X'=X\times_SS'\to X$  be the base change of $b$ by $X\to S$. Then
		we have
		\begin{equation}\label{eq:pbCNA}
b_{X}^\ast \widetilde{C}_{\Delta}(\mathcal F)=\widetilde{C}_{\Delta'}(b_X^\ast\mathcal F)\quad{\rm in}\quad H^0_{Z'}(X',\mathcal K_{X'/Y'/S'}),
\end{equation}
		where $b_{X}^\ast:H^0_{Z}(X,\mathcal K_{X/Y/S})\to H^0_{Z'}(X',\mathcal K_{X'/Y'/S'})$ is the induced pullback morphism.
		\item $($Proper pushforward$)$
		Consider a  diagram $\Delta'=\Delta^{Z'}_{X'/Y/S}$. Let $s:X\to X'$ be a proper morphism over $Y$ such that $Z\subseteq s^{-1}(Z')$.
		Then
		we have
		\begin{equation}\label{eq:pfCNA}
s_\ast (\widetilde{C}_{\Delta}(\mathcal F))=\widetilde{C}_{\Delta'}(Rs_\ast\mathcal F)\quad{\rm in}\quad H^0_{Z'}(X',\mathcal K_{X'/Y/S}),
\end{equation}
		where $s_\ast:H^0_{Z}(X,\mathcal K_{X/Y/S})\to H^0_{Z'}(X',\mathcal K_{X'/Y/S})$ is the induced  pushforward morphism.
		\item $($Cohomological Milnor formula$)$
		Assume $S=\Spec k$ for a perfect field $k$ of characteristic $p>0$, 
        $Y$ is a smooth curve over $k$
        and $\Lambda$ is a finite local ring such that the characteristic of the residue field is invertible in $k$. If $Z=\{x\}$, then we have
		\begin{equation}\label{eq:CMF}
C_{\Delta}(\mathcal F)=-\operatorname{dimtot} R\Phi_{\bar{x}}(\mathcal F,f)\quad {\rm in}\quad  \Lambda=H^0_{x}(X,\mathcal K_{X/k}),
\end{equation}
			where $R\Phi(\mathcal F,f)$ is  the complex of vanishing cycles and $\operatorname{dimtot}={\rm dim}+{\rm Sw}$ is the total dimension. 
		\item $($Cohomological conductor formula$)$ Assume $S=\Spec k$ for a perfect field $k$ of characteristic $p>0$ and $\Lambda$ is a finite local ring such that the characteristic of the residue field is invertible in $k$. If $Y$ is a smooth connected curve over $k$, $f$ is proper and  $Z=f^{-1}(y)$ for a closed point $y\in |Y|$, then we have 
			\begin{equation}\label{eq:CCF}
f_\ast \widetilde{C}_{\Delta}(\mathcal F)=-{ a}_y(R f_\ast\mathcal F)\quad {\rm in}\quad  \Lambda=H^0_{y}(Y,\mathcal K_{Y/k}),
\end{equation}
			where $a_y(\mathcal G)={\rm rank}\mathcal G|_{\bar\eta}-{\rm rank}\mathcal G_{\bar y}+{\rm Sw}_y\mathcal G$ is the Artin conductor of an object $\mathcal G\in D_{\rm ctf}(Y,\Lambda)$ at $y$ and $\eta$ is the generic point of $Y$.
        \item $($Specialization$)$
Assume that $S$ is a henselian trait with generic point $\eta$,
and let $s$ be a geometric point above its closed point.
Put $\Delta_t=\Delta^{Z_t}_{X_t/Y_t/t}$ and
$\mathcal F_t=\mathcal F|_{X_t}$ for $t\in\{\eta,s\}$.
For this item, the hypothesis on $f$ may be weakened to universal
local acyclicity of $f_t$ relatively to $\mathcal F_t$ outside $Z_t$.
Then we have
\begin{equation}\label{eq:na-specialization}
  \operatorname{sp}^{\mathrm{\acute et}}
  \bigl(\widetilde C_{\Delta_\eta}(\mathcal F_\eta)\bigr)
  =
  \widetilde C_{\Delta_s}(\mathcal F_s)
  \quad\text{in } H^0_{Z_s}(X_s,\mathcal K_{X_s/Y_s/s}),
\end{equation}
where 
\begin{equation}\label{eq:auto-002}
\operatorname{sp}^{\mathrm{\acute et}}:
H^0_{Z_\eta}(X_\eta,\mathcal K_{X_\eta/Y_\eta/\eta})
\longrightarrow
H^0_{Z_s}(X_s,\mathcal K_{X_s/Y_s/s})
\end{equation}
is the specialization map.
	\end{enumerate}
\end{proposition}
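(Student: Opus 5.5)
This proposition collects results from \cite{YZ22}, so the plan is to reduce each item to the corresponding statement there, checking only that the hypotheses and conventions agree. First I will confirm that the objects in \eqref{eq:defCharClassInfinity} and \eqref{eq:priciseDefCZXYS} match \cite[(3.49), Definition~4.2]{YZ22}. The only input used to invert the arrow $\delta_0^!\mathcal T_S\to\delta_0^!\mathcal H_S$ is the universal local acyclicity of $h$ relative to $\mathcal F$. Its invertibility follows from Lemma~\ref{lem:Trans=ULA}(3),(4) applied with $\mathcal G=\mathcal F$. The support statement for $\delta_1^*\delta^\Delta\mathcal T_S$ on $Z$ uses the universal local acyclicity of $X\setminus Z\to Y$: on $X\setminus Z$ the Künneth map over $Y$ is an isomorphism, so the cone defining $\delta^\Delta$ vanishes there.

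\textbf{Items (1)--(3).} For (1), I start from the triangle \eqref{eq:distriK} and the vanishing $H^0(Z,\mathcal K_{Z/Y})=H^1(Z,\mathcal K_{Z/Y})=0$. The long exact sequence of $H^\ast_Z$ then shows that \eqref{eq:deltaf} is an isomorphism, so $C_\Delta$ is defined. Next I compare $C_{X/S}(\mathcal F)$ with its image in $H^0(X,\mathcal K_{X/Y/S})$. The difference lifts to $H^0(X,\mathcal K_{X/Y})$, and it is identified with $c_r(f^*\Omega^{1,\vee}_{Y/S})\cap C_{X/Y}(\mathcal F)$ via the self-intersection formula for the diagonal $\delta:Y\to Y\times_SY$, whose Euler class is $c_r$ of the normal bundle $T_{Y/S}$. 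This is \cite[Theorem~1.5]{YZ22}. Items (2) and (3) are the formal compatibilities of each arrow in \eqref{eq:priciseDefCZXYS} with base change and with proper pushforward. For (2) one uses that $b$ preserves universal local acyclicity. For (3) one uses proper base change together with $Rs_*=Rs_!$, which identifies $Rs_*D(\mathcal F)$ with $D(Rs_*\mathcal F)$. I will cite \cite[Proposition~1.6, Theorem~1.7]{YZ22}, checking only that $Z'$ contains $s(Z)$ and that $Rs_*\mathcal F$ inherits the acyclicity hypotheses over $X'\setminus Z'$. The latter holds because proper pushforward preserves universal local acyclicity.

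\textbf{Items (4)--(6).} For (4), I localize at the isolated point $x$, where $H^0_x(X,\mathcal K_{X/k})=\Lambda$. The class becomes a trace of the identity on the local vanishing cycle complex, and its identification with $-\operatorname{dimtot}R\Phi_{\bar x}$ is \cite[Theorem~1.8]{YZ22}. Its proof relies on a Grothendieck--Ogg--Shafarevich type computation along the curve $Y$. Item (5) follows by combining (3) for $s=f$ with (4) applied on $Y$ itself, that is, $\Delta^{y}_{Y/Y/k}$ and $\mathcal G=Rf_*\mathcal F$. The local class is then $-a_y$, since for the identity map the vanishing cycles at $y$ measure $\operatorname{rank}\mathcal G_{\bar\eta}-\operatorname{rank}\mathcal G_{\bar y}+{\rm Sw}_y$. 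For (6), I follow \cite[Proposition~4.7]{YZ22}. I would take the nearby cycle functor on $X$ over the trait, apply it to \eqref{eq:priciseDefCZXYS}, and use that each arrow commutes with $R\Psi$ under local acyclicity. The weakened hypothesis is enough because only the fibrewise support property over $X_t\setminus Z_t$ enters. Of all the items, I expect (6) to be the main obstacle, because commuting $\delta^\Delta$ and $\delta_0^!$ with nearby cycles needs Künneth-type isomorphisms for $R\Psi$ on $X\times_SX$. I would first verify these via Lemma~\ref{lem:Trans=ULA}(8).
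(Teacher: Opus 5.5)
Your plan matches the paper's. The paper gives no argument for this proposition and quotes \cite{YZ22} item by item. Your deduction of (5) is also sound: you apply (3) with $s=f$, $X'=Y$, $Z'=\{y\}$, then (4) to $\Delta^{\{y\}}_{Y/Y/k}$ and $Rf_*\mathcal F$, using $\operatorname{dimtot}R\Phi_{\bar y}(\mathcal G,{\rm id})=a_y(\mathcal G)$; this is exactly the identity \eqref{eq:CAC}. Be aware, though, that the citations, not your sketches, carry (1) and (4).

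\begin{itemize}
\item In (1), $C_{X/Y}(\mathcal F)$ only makes sense as the unique extension of $C_{(X\setminus Z)/Y}(\mathcal F|_{X\setminus Z})$ through $H^0(X,\mathcal K_{X/Y})\simeq H^0(X\setminus Z,\mathcal K_{(X\setminus Z)/Y})$. This is a second use of the vanishing hypothesis, which your sketch does not mention.
\item Also in (1), knowing that $C_{X/S}(\mathcal F)-C_\Delta(\mathcal F)$ lifts to $H^0(X,\mathcal K_{X/Y})$ does not identify the lift with $C_{X/Y}(\mathcal F)$. For $f={\rm id}$ on a proper curve over an algebraically closed field, $c_1$ restricts to zero on the affine curve $X\setminus Z$. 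So this formal reasoning cannot detect the coefficient ${\rm rank}\,\mathcal F$ of $c_1$ in \eqref{eq:CGOS}.
\item In (4), a trace of the identity of the $\Lambda$-complex $R\Phi_{\bar x}$ would give only $\dim R\Phi_{\bar x}$, not the Swan part of $\operatorname{dimtot}$.
\end{itemize}
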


\subsection{}Let $X$ be a smooth connected curve over $k$. 
Let $\mathcal F\in D_{\rm ctf}(X,\Lambda)$ and $Z\subseteq X$ be a finite set of closed points such that the cohomology sheaves of $\mathcal F|_{X\setminus Z}$ are locally constant. By the cohomological Milnor formula  \eqref{eq:CMF}, we have the following (motivic) expression for the Artin conductor of $\mathcal F$ at $x\in Z$
\begin{equation}\label{eq:CAC}
a_x(\mathcal F)=\operatorname{dimtot}R\Phi_{\bar x}(\mathcal F,{\rm id})=-C^{\{x\}}_{U/U/k}(\mathcal F|_{U}),
\end{equation}
where $U$ is any open subscheme of $X$ such that $U\cap Z=\{x\}$. By \eqref{eq:fibCNA},
we get the following cohomological Grothendieck-Ogg-Shafarevich formula (cf. \cite[Corollary~6.4]{YZ22}):
\begin{equation}\label{eq:CGOS}
C_{X/k}(\mathcal F)={\rm rank}\mathcal F\cdot c_1(\Omega_{X/k}^{1,\vee})-\sum_{x\in Z}{a}_x(\mathcal F)\cdot[x]\quad {\rm in}\quad H^0(X,\mathcal K_{X/k}).
\end{equation}

\subsection{} Now we start to construct and prove a geometric counterpart of Proposition \ref{NAprop}.
Let $k$ be a perfect field of characteristic $p>0$ and $\Lambda$ be a finite local ring whose residue field is of characteristic $\ell\neq p$. Let ${\rm Sm}_k$ be the category of smooth schemes over $k$.
Let $S$ be a smooth connected scheme of dimension $s$ over $k$. Let $f:X\to S$ be a morphism in  ${\rm Sm}_k$. Let $\mathcal F\in D_{\rm ctf}(X,\Lambda)$ such that $f$ is $SS(\mathcal F)$-transversal. Consider the following morphisms
\begin{equation}\label{eq:display-550}
X\xrightarrow{0}T^{\ast} S\times_S X\xrightarrow{df}T^{\ast} X,
\end{equation}
where $0$ stands for the zero section. By assumption, $df^{-1}(SS(\mathcal F))$ is contained in $0(X)$.
We define the relative characteristic class of $\mathcal F$ to be the following $s$-cycle class on $X$:
\begin{equation}\label{eq:defRCC}
cc_{X/S}(\mathcal F):= (-1)^s\cdot (df)^!(CC(\mathcal F))\quad{\rm in}\quad {\rm CH}_s(X),
\end{equation}
where $(df)^!$ is the refined Gysin pullback.
When $f$ is a smooth morphism, then we have a Cartesian diagram
\begin{equation}\label{eq:display-561}
\begin{gathered}
\xymatrix{
T^{\ast} S\times_SX\ar[r]^-{df}\ar[d]\ar@{}[rd]|{\Box}&T^{\ast} X\ar[d]\\
X\ar[r]_-{0_{X/S}}&T^{\ast}(X/S).
}
\end{gathered}
\end{equation}
In this case, we have $cc_{X/S}(\mathcal F)=(-1)^s\cdot 0_{X/S}^!(CC(\mathcal F))$ (cf. \cite[Definition 2.11]{YZ18}). If $f$ is a smooth morphism of relative dimension $r$ and if $\mathcal F$ is locally constant, then we have 
\begin{equation}\label{eq:ccforconstant}
\begin{split}
cc_{X/S}(\mathcal F)
=(-1)^s\cdot 0_{X/S}^!((-1)^{{\rm dim}X}\cdot {\rm rank}\mathcal F\cdot [T^{\ast}_XX])
={\rm rank}\mathcal F\cdot c_r(\Omega_{X/S}^{1,\vee})\cap [X].
\end{split}
\end{equation}

We have the following conjecture, cf. \cite[Conjecture~3.8]{YZ18}:
\begin{conjecture}\label{conj:relativeSaitoConj}
Let $S$ be a smooth connected scheme of dimension $s$ over $k$. Let $f:X\to S$ be a morphism in  ${\rm Sm}_k$. Let $\mathcal F\in D_{\rm ctf}(X,\Lambda)$ such that $f$ is $SS(\mathcal F)$-transversal. Then we have
\begin{equation}\label{eq:conjC}
{\rm cl}(cc_{X/S}(\mathcal F))=C_{X/S}(\mathcal F)\quad{\rm in}\quad H^0(X,\mathcal K_{X/S}),
\end{equation}
where ${\rm cl}: {\rm CH}_s(X)\to H^0(X,\mathcal K_{X/S})$ is the cycle class map.
\end{conjecture}
When $S=\Spec k$, then
it is Saito's conjecture \cite[Conjecture 6.8.1]{Sai17a}, which is proved under quasi-projective assumption in \cite[Theorem~1.2]{YZ22}.
When $f: X\to S$ is a smooth morphism, then \eqref{eq:conjC} is true for a locally constant constructible (flat) sheaf $\mathcal F$ of $\Lambda$-modules. Indeed, this follows from \eqref{eq:ccforconstant}, \cite[Lemma~3.1]{YZ22} and \eqref{eq:fibCNA}.
\begin{remark}
    By \cite[Chapter~17]{Ful98}, ${\rm CH}_s(X)\cong {\rm CH}^0(X\to S)$ where the latter bivariant Chow group can be regarded as a relative version of ${\rm CH}_0(X)$. However, in general we cannot expect $cc_{X/S}(\mathcal{F})$ to lift canonically to a relative cycle.
\end{remark}
\subsection{}\label{subsec:defGNA}
Consider a commutative diagram
\begin{equation}\label{intro:deltadiag}
\begin{gathered}
\xymatrix{
Z\ar@{^(->}[r]^\tau&X\ar[rr]^f\ar[rd]_h&&Y\ar[ld]^g\\
&&S&
}
\end{gathered}
\end{equation}
where $X,Y,S$ belong to $\mathrm{Sm}_k$ and $\tau:Z\hookrightarrow X$
is a closed immersion. Assume that $X,Y,S$ are of pure dimension, that $S$ is connected
of dimension $s$, and that $g$ is smooth of relative dimension $r>0$.
Put $n=\dim X$.
Let $\mathcal F\in D_{\rm ctf}(X,\Lambda)$ such that $h$ is
$SS(\mathcal F)$-transversal and $f|_{X\setminus Z}$ is
$SS(\mathcal F|_{X\setminus Z})$-transversal.
Let
\begin{equation}\label{eq:projective-projection}
q:\mathbb{P}(T^*Y\times_YX)\longrightarrow X.
\end{equation}
be the projective bundle induced by $T^*Y\times_YX$ over $X$. The tautological subbundle and the differential give
\begin{equation}\label{eq:projectivized-differential-map}
\mathcal O(-1)\lhook\joinrel\longrightarrow
q^*(T^*Y\times_YX)\xrightarrow{q^*df}q^*T^*X.
\end{equation}
Under $\mathcal Hom(\mathcal O(-1),q^*T^*X)
=q^*T^*X\otimes\mathcal O(1)$, this homomorphism defines a section
of the vector bundle $q^*T^*X\otimes\mathcal O(1)$:
\begin{equation}\label{eq:overline-df}
\overline{df}:\mathbb{P}(T^*Y\times_YX)
\longrightarrow q^*T^*X\otimes\mathcal O(1).
\end{equation}
At a geometric point $(x,\lambda)$, with
$\lambda\subset T^*_{f(x)}Y$ a line, its value is the map
$df_x|_\lambda:\lambda\to T_x^*X$.
Thus $\overline{df}$ is defined even when $df_x(\lambda)=0$.
It is a section of a vector bundle.

For a conical closed subscheme $D\subset T^*X$, we define its twist.
On an open $V\subset\mathbb{P}(T^*Y\times_YX)$ with local basis $e$
of $\mathcal O(1)$, transport $D\times_XV$ by
\begin{equation}\label{eq:auto-003}
q^*T^*X|_V\xrightarrow{\sim}
(q^*T^*X\otimes\mathcal O(1))|_V,\qquad
\xi\longmapsto\xi\otimes e.
\end{equation}
Changing $e$ multiplies the vector coordinate by a unit.
Since the ideal of $D$ is homogeneous, these closed subschemes
agree on overlaps and define
$D_{\mathcal O(1)}\subset q^*T^*X\otimes\mathcal O(1)$.
Equivalently, if
$D=\operatorname{Spec}_X(\bigoplus_{a\ge0}\mathcal A_a)$
with its graded coordinate algebra, then
\begin{equation}\label{eq:cone-twist}
D_{\mathcal O(1)}
=\operatorname{Spec}_{\mathbb{P}(T^*Y\times_YX)}
\left(\bigoplus_{a\ge0}q^*\mathcal A_a\otimes\mathcal O(-a)\right).
\end{equation}
For $CC(\mathcal F)=\sum_a m_a[C_a]$, set
\begin{equation}\label{eq:CC-twist}
CC(\mathcal F)_{\mathcal O(1)}
:=\sum_a m_a[(C_a)_{\mathcal O(1)}].
\end{equation}
Each $(C_a)_{\mathcal O(1)}$ is integral of dimension $n+r+s-1$.
We apply the same twist separately to $SS(\mathcal F)$.

Notice that we have the following Cartesian diagram
\begin{equation}\label{eq:projective-gysin-square}
\begin{tikzcd}[ampersand replacement=\&,column sep=large,row sep=large]
\mathbb{P}((df)^{-1}SS(\mathcal F))
 \arrow[r]\arrow[d]\&SS(\mathcal F)_{\mathcal O(1)}\arrow[d]\\
\mathbb{P}(T^*Y\times_YX)
 \arrow[r,"\overline{df}"']\&q^*T^*X\otimes\mathcal O(1)
\end{tikzcd}
\end{equation}
which can be checked on open subschemes.
Moreover,
the section $\overline{df}$ is a regular immersion of codimension $n$, so
\begin{equation}\label{eq:projective-refined-class}
\overline{df}^{\,!}CC(\mathcal F)_{\mathcal O(1)}
\in{\rm CH}_{r+s-1}\bigl(\mathbb{P}((df)^{-1}SS(\mathcal F))\bigr).
\end{equation}
The hypotheses imply
\begin{equation}\label{eq:auto-004}
(df)^{-1}SS(\mathcal F)\subset0(X)\cup(T^*Y\times_YZ),\qquad
(df)^{-1}SS(\mathcal F)\cap(T^*S\times_SX)\subset0(X).
\end{equation}
Consequently, on $\mathbb{P}((df)^{-1}SS(\mathcal F))$, the tautological line bundle $\mathcal O(-1)$ is a subbundle of $q^* f^* T^*(Y/S)$,
by the short exact sequence of bundles
\begin{equation}\label{eq:auto-005}
0\rightarrow\ T^*S\times_SX\rightarrow T^*Y\times_Y X \rightarrow f^*T^*(Y/S)\rightarrow0.
\end{equation}
The reduced closed subscheme of the support $\mathbb{P}\left((df)^{-1}SS(\mathcal F)\right)$ maps properly to $Z_{\rm red}$ and hence to $Z$.
For this pushforward, denoted by $q_{Z*}$, we use the canonical
invariance of Chow groups under reduction. The refined intersection
retains its scheme structure and intersection multiplicities.

\begin{definition}\label{def:projectiveGNA}
We define the geometric non-acyclicity class of $\mathcal F$ by
\begin{equation}\label{eq:defGNA}
cc^Z_{X/Y/S}(\mathcal F)
:=(-1)^s q_{Z*}\left[
c_{r-1}\left(\frac{q^*f^*T^*(Y/S)}{\mathcal O(-1)}\right)
\cap\overline{df}^{\,!}CC(\mathcal F)_{\mathcal O(1)}
\right]\in{\rm CH}_s(Z).
\end{equation}
Here, the quotient bundle is taken on
$\mathbb{P}((df)^{-1}SS(\mathcal F))$.
\end{definition}

\begin{proposition}\label{prop:geoNA-elementary}
Consider the previous assumptions.
\begin{enumerate}
    \item If $f$ is $SS(\mathcal F)$-transversal on $X$, then
$cc^Z_{X/Y/S}(\mathcal F)=0$ for any $Z$;
    \item Let $Z\subset Z_1\subset X$ be closed subschemes, with inclusion $i:Z\hookrightarrow Z_1$. Then
    \begin{equation}\label{eq:auto-006}
i_*cc^Z_{X/Y/S}(\mathcal F)=cc^{Z_1}_{X/Y/S}(\mathcal F);
\end{equation}
    \item Let 
    \begin{equation}\label{eq:auto-007}
\mathcal F' \rightarrow \mathcal F\rightarrow \mathcal F'' \xrightarrow{+1}
\end{equation}
be a distinguished triangle in $\Dctf(X,\Lambda)$ 
with $\mathcal F'$, $\mathcal F''$ satisfy the transversality condition in the beginning of section \ref{subsec:defGNA}.
Then we have
    \begin{equation}\label{eq:auto-008}
        cc^Z_{X/Y/S}(\mathcal F)=
        cc^Z_{X/Y/S}(\mathcal F')+cc^Z_{X/Y/S}(\mathcal F'').
    \end{equation}
\end{enumerate}
\end{proposition}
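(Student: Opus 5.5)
All three parts come from the definition \eqref{eq:defGNA}: the refined Gysin pullback $\overline{df}^{\,!}$ is local on the support, and $q_{Z*}$ is a proper pushforward.

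\emph{Part (1).} Suppose $f$ is $SS(\mathcal F)$-transversal on all of $X$. Then $(df)^{-1}SS(\mathcal F)\subset 0(X)$. Hence the projectivization $\mathbb{P}((df)^{-1}SS(\mathcal F))$ is empty, since the projectivization of a cone contained in the zero section is empty. Concretely, the refined class \eqref{eq:projective-refined-class} lives in ${\rm CH}_{r+s-1}(\emptyset)=0$, so $cc^Z_{X/Y/S}(\mathcal F)=0$. The point to check is that the Cartesian square \eqref{eq:projective-gysin-square} identifies the scheme-theoretic intersection $\overline{df}^{-1}(SS(\mathcal F)_{\mathcal O(1)})$ with $\mathbb{P}((df)^{-1}SS(\mathcal F))$, even as schemes. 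Even if only the reduced supports agree, the intersection is still empty: a point $(x,\lambda)$ with $df_x|_\lambda$ landing in $SS(\mathcal F)$ gives a nonzero covector in $(df)^{-1}SS(\mathcal F)$ outside the zero section.

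\emph{Part (2).} The class $\overline{df}^{\,!}CC(\mathcal F)_{\mathcal O(1)}$ together with its Chern class twist is built on $\mathbb{P}((df)^{-1}SS(\mathcal F))$. Neither depends on $Z$: $Z$ only enters through the target of the proper map $q_Z$ from the support to $Z$. For $Z\subset Z_1$ we have $q_{Z_1}=i\circ q_Z$ on reduced supports. Functoriality of proper pushforward, $(i\circ q_Z)_*=i_*q_{Z*}$, then gives the claim. Compatibility with the identification of Chow groups with those of reductions is formal. The hypothesis needed for $cc^{Z_1}$ to be defined, transversality outside $Z_1$, follows from transversality outside $Z$.

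\emph{Part (3).} This is the main step. I would first use additivity of characteristic cycles in distinguished triangles, $CC(\mathcal F)=CC(\mathcal F')+CC(\mathcal F'')$ (Saito \cite{Sai17a}). It follows from the Milnor formula characterization and additivity of $\operatorname{dimtot}R\Phi$.

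The difficulty is that the three cycles are supported on different closed sets $SS(\mathcal F),SS(\mathcal F'),SS(\mathcal F'')$. So I would work on a common conical closed subset $C=SS(\mathcal F)\cup SS(\mathcal F')\cup SS(\mathcal F'')$. By hypothesis, $h$ is $C$-transversal and $f$ is $C$-transversal outside $Z$, since each piece satisfies the transversality conditions. The construction of \S\ref{subsec:defGNA} then applies verbatim with $C$ in place of $SS$. It produces a class on $\mathbb{P}((df)^{-1}C)$, where $\mathcal O(-1)\subset q^*f^*T^*(Y/S)$ still holds by \eqref{eq:auto-004} for $C$.

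Next I check that for each sheaf, the class defined through $C$ pushes to the same element of ${\rm CH}_s(Z)$ as the one defined through its own singular support. This holds because the refined Gysin map commutes with proper pushforward along the closed immersion of supports $SS(\mathcal F)_{\mathcal O(1)}\hookrightarrow C_{\mathcal O(1)}$ (\cite[Theorem 6.2(a)]{Ful98}). Cap product with Chern classes also commutes with this pushforward, via the projection formula, since the quotient bundle is restricted from the larger support.

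With everything on $C$, the operations $\gamma\mapsto\gamma_{\mathcal O(1)}$, $\overline{df}^{\,!}$, $c_{r-1}(\cdot)\cap$ and $q_{Z*}$ are all group homomorphisms on cycles supported on $C$. Additivity of $CC$ then gives the result. The twist $\gamma\mapsto\gamma_{\mathcal O(1)}$ is visibly additive by \eqref{eq:CC-twist}.
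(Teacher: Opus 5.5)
Your proposal is correct and follows the paper's proof: (1) and (2) are immediate from the definition, and (3) comes from additivity of characteristic cycles in distinguished triangles \cite[Lemma~5.13]{Sai17a}. Your passage to the common support $C=SS(\mathcal F)\cup SS(\mathcal F')\cup SS(\mathcal F'')$, using compatibility of $\overline{df}^{\,!}$ and Chern classes with proper pushforward, spells out a step the paper leaves implicit.
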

\begin{proof}
The parts (1) and (2) are direct from definition. For (3), we use \cite[Lemma~5.13]{Sai17a}.
\end{proof}

\subsection{}

For a vector bundle $V$ of rank $d$ on $X$, put
\begin{equation}\label{eq:auto-009}
c_t(V)
=
t^d+c_1(V)t^{d-1}+\cdots+c_d(V)
\in \operatorname{CH}^*(X)[t].
\end{equation}
We regard $t$ as a formal variable of codimension one. Recall that
\begin{equation}\label{eq:auto-010}
\frac{1}{c_t(V)}
=
t^{-d}\sum_{a\geq 0}s_a(V)t^{-a},
\end{equation}
where $s(V)=c(V)^{-1}$ is the total Segre class.

Let $n=\dim X$, and write
\begin{equation}\label{eq:auto-011}
0_X:X\longrightarrow T^*X,
\qquad
q_X:\mathbb{P}(T^*X)\longrightarrow X
\end{equation}
for the zero section and the projection. Put
$\xi_X=c_1(\mathcal O_{\mathbb{P}(T^*X)}(1))$.
Write
\begin{equation}\label{eq:auto-012}
CC(\mathcal F)=0_{X*}\Gamma+\sum_a m_a[C_a],
\end{equation}
where $\Gamma$ is an $n$-cycle on $X$ and no $C_a$ is contained in the
zero section. Since each $C_a$ is conical,
$\mathbb{P}(C_a)$ is a closed integral subscheme of
$\mathbb{P}(T^*X)$ of dimension $n-1$.
By the projective bundle formula, there are classes
\begin{equation}\label{eq:beta-explicit}
\beta_j
={q_X}_*\!\left[
c_{n-1-j}\!\left(
\frac{q_X^*T^*X}{\mathcal O(-1)}
\right)
\cap\sum_a m_a[\mathbb P(C_a)]
\right],
\qquad 0\leq j\leq n-1
\end{equation}
such that
\begin{equation}\label{eq:auto-014}
\sum_a m_a[\mathbb{P}(C_a)]
=
\sum_{j=0}^{n-1}
\xi_X^j\cap q_X^*\beta_j
\quad\text{in }
\operatorname{CH}_{n-1}(\mathbb{P}(T^*X)).
\end{equation}
Define $b_j\in\operatorname{CH}_j(X)$ by
\begin{equation}
\label{eq:projective-characteristic-polynomial}
\sum_{j=0}^n b_jt^j
=
c_t(T^*X)\cap\Gamma
+
\sum_{j=0}^{n-1}\beta_jt^j,
\end{equation}
where
\begin{equation}\label{eq:auto-015}
c_t(T^*X)\cap\Gamma
=
\sum_{i=0}^n
\bigl(c_i(T^*X)\cap\Gamma\bigr)t^{n-i}.
\end{equation}
In particular, $b_n=\Gamma$. The classes $b_j$ are canonically
determined by $CC(\mathcal F)$. Indeed, by the projective bundle relation
\begin{equation}\label{eq:auto-016}
\xi_X^n+c_1(T^*X)\xi_X^{n-1}
+\cdots+c_n(T^*X)=0
\end{equation}
we have
\begin{equation}
\label{eq:pcccal}
    \sum_{j=0}^n
\xi_X^j\cap q_X^*b_j
=
\sum_a m_a[\mathbb{P}(C_a)].
\end{equation}
Put $m=\dim Y=r+s$. We define
\begin{equation}
\label{eq:relative-characteristic-class}
\overline{cc}_{X/Y}(\mathcal F)
:=
(-1)^m
\sum_{j\geq m}
s_{j-m}(T^*Y\times_YX)\cap b_j
\in\operatorname{CH}_m(X).
\end{equation}
Equivalently, $\overline{cc}_{X/Y}(\mathcal F)$ is the constant coefficient of the Laurent
expansion at $t=\infty$ of
\begin{equation}\label{eq:auto-017}
(-1)^m\cdot
{c_t(T^*Y\times_YX)}^{-1}
\cdot
\sum_{j=0}^n b_jt^j.
\end{equation}

\begin{proposition}[fibration formula]
Consider the previous assumptions, then
the restriction of $\overline{cc}_{X/Y}(\mathcal F)$ to $X\setminus Z$ is
$cc_{(X\setminus Z)/Y}(\mathcal F|_{X\setminus Z})$.
Moreover,
\begin{equation}
\label{eq:relative-characteristic-fibration}
cc_{X/S}(\mathcal F)
=
c_r\bigl((f^*\Omega^1_{Y/S})^\vee\bigr)
\cap \overline{cc}_{X/Y}(\mathcal F)
+
\tau_*cc^Z_{X/Y/S}(\mathcal F)
\quad\text{in }\operatorname{CH}_s(X).
\end{equation}
In particular, if $\dim Z<r+s$, the restriction
\begin{equation}\label{eq:auto-018}
\operatorname{CH}_{r+s}(X)
\longrightarrow
\operatorname{CH}_{r+s}(X\setminus Z)
\end{equation}
is an isomorphism, and $\overline{cc}_{X/Y}(\mathcal F)$ is the unique extension of
$cc_{(X\setminus Z)/Y}(\mathcal F|_{X\setminus Z})$. If $Z=\varnothing$, then
\begin{equation}
\label{eq:relative-characteristic-fibration-transversal}
cc_{X/S}(\mathcal F)
=
c_r\bigl((f^*\Omega^1_{Y/S})^\vee\bigr)
\cap cc_{X/Y}(\mathcal F).
\end{equation}
\end{proposition}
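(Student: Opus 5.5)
The approach is to compute everything on the projective bundle $P:=\mathbb P(T^*Y\times_YX)$ and compare with the projective bundle of $T^*X$. The tool is the excess-intersection description of refined Gysin maps for sections of bundles: for a cone $C\subset T^*X$ and the linear map $df:T^*Y\times_YX\to T^*X$ of bundles of ranks $m=r+s$ and $n$, the class $(df)^!C$ can be computed from the Segre class of $(df)^{-1}C$ in $T^*Y\times_YX$. Write $CC(\mathcal F)=0_{X*}\Gamma+\sum_a m_a[C_a]$ as in \eqref{eq:auto-012}, and use the classes $b_j$ characterized by \eqref{eq:pcccal}.

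\textbf{Step 1 (zero-section part).} First I would express $cc_{X/S}(\mathcal F)=(-1)^s(dh)^!CC(\mathcal F)$ through the $b_j$. Since $dh$ factors as $T^*S\times_SX\to T^*Y\times_YX\xrightarrow{df}T^*X$, functoriality of refined Gysin maps gives $(dh)^!=0^!_{T^*(Y/S)}\circ(df)^!$ after restricting to the sub-bundle $T^*S\times_SX$, which is the zero locus of the induced section of $q^*f^*T^*(Y/S)$. On the projectivization, I would split $(df)^!CC(\mathcal F)$ into two pieces. The first is the contribution of the zero section $0(X)$, given by the standard formula $\{s(T^*Y\times_YX)\cap \sum_j b_j\}_m$ (Fulton, Ex.~6.1.? / Prop.~6.1(a) combined with \eqref{eq:auto-017}); this gives exactly $(-1)^m\overline{cc}_{X/Y}(\mathcal F)$. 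The second is the pushforward from $P$ of $\overline{df}^{\,!}CC(\mathcal F)_{\mathcal O(1)}$, capped with the appropriate Segre/Chern classes of $\mathcal O(-1)$.

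\textbf{Step 2 (restriction to $X\setminus Z$).} On $X\setminus Z$, \eqref{eq:auto-004} shows $(df)^{-1}SS(\mathcal F)\subset 0(X)$. Hence $\mathbb P((df)^{-1}SS)$ is empty there and only the zero-section term survives. By the same excess-intersection identity, $(df)^!CC=\{s(T^*Y\times_YX)\cap\sum b_j\}_m$, so $\overline{cc}_{X/Y}$ restricts to $cc_{(X\setminus Z)/Y}$. This is a local identity on $X\setminus Z$: it is the formula for the Gysin pullback of a cone along a linear map transversal to it, proved via deformation to the normal cone and the projective bundle formula.

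\textbf{Step 3 (fibration formula).} Now apply $0^!_{T^*(Y/S)}$, i.e.\ capping with $c_r(f^*T^*(Y/S))$, up to sign conventions giving $c_r((f^*\Omega^1_{Y/S})^\vee)$ after tracking $(-1)^s$, $(-1)^m$. On the zero-section piece this produces $c_r((f^*\Omega^1_{Y/S})^\vee)\cap\overline{cc}_{X/Y}$. On the projective piece, the section of $q^*f^*T^*(Y/S)$ factors through $\mathcal O(-1)$, by \eqref{eq:auto-004}, which says $\mathcal O(-1)\subset q^*f^*T^*(Y/S)$ there. Hence $c_r(q^*f^*T^*(Y/S))=c_1(\mathcal O(-1))\,c_{r-1}(\text{quotient})$ by Whitney. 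The $c_1(\mathcal O(-1))$ factor cancels the projective-bundle shift, leaving $q_{Z*}[c_{r-1}(\cdots)\cap\overline{df}^{\,!}CC_{\mathcal O(1)}]$, i.e.\ $\tau_*cc^Z_{X/Y/S}$.

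\textbf{Expected obstacle.} The main difficulty is making Step 3 rigorous as an identity of classes rather than a heuristic. The refined class lives on $\mathbb P((df)^{-1}SS)$, and one needs the decomposition of $(dh)^!CC$ into a zero-section term plus a projectivized term. I would try to get it from the blow-up of $T^*Y\times_YX$ along the zero section: the tautological line bundle $\mathcal O(-1)$ on $P$ is the exceptional normal bundle, so $(dh)^!CC$ splits via the excess formula on the blow-up. The delicate point is verifying that $\overline{df}^{\,!}$ of the twisted cone matches the restriction of $(df)^!CC$ to the exceptional divisor.

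\textbf{Last claim.} If $\dim Z<r+s$, then $\mathrm{CH}_{r+s}(Z)=0$, and the localization sequence gives that restriction is an isomorphism.
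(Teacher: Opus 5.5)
\paragraph{The gap is Step 1.} Your Step 2, the last claim, and the local computation in Step 3 are fine. Write $\iota:T^*S\times_SX\hookrightarrow T^*Y\times_YX$, and $p,\pi$ for the maps from the total space of $\mathcal O(-1)$ over $\mathbb P((df)^{-1}SS(\mathcal F))$. Then $\iota^!0_*\alpha=c_r(f^*T^*(Y/S))\cap\alpha$ by excess intersection. Also, the tautological section of $q^*f^*T^*(Y/S)$ factors through $\mathcal O(-1)$, so $\iota^!\pi_*p^*\beta=q_*\bigl[c_{r-1}\bigl(q^*f^*T^*(Y/S)/\mathcal O(-1)\bigr)\cap\beta\bigr]$.

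Step 1, however, carries the entire content of the statement, and it is not proved.
\begin{itemize}
\item The expression $\{s(T^*Y\times_YX)\cap\sum_jb_j\}_m$ is the cone formula for a \emph{transversal} linear map, as in your Step 2. It does not compute a ``piece'' of a non-transversal refined intersection.
\item There is no canonical zero-section part of $(df)^!CC(\mathcal F)\in{\rm CH}_m((df)^{-1}SS(\mathcal F))$. A class $c_1(\mathcal O(-1))\cap\beta'$ on $\mathbb P((df)^{-1}SS(\mathcal F))$, pulled back by $p$ and pushed down by $\pi$, lands on the zero section.
\item So the splitting only has content if you fix the projective piece to be $\pi_*p^*\overline{df}^{\,!}CC(\mathcal F)_{\mathcal O(1)}$ and then prove that the remainder is $(-1)^m0_*\overline{cc}_{X/Y}(\mathcal F)$. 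After applying $\iota^!$ and your Step 3, that assertion is exactly \eqref{eq:relative-characteristic-fibration}. Step 1 therefore assumes what is to be proved.
\end{itemize}

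\paragraph{The blow-up does not rescue it.} Off the exceptional divisor $P$, the total transform $\pi^!(df)^!CC(\mathcal F)$ agrees with $p^*\overline{df}^{\,!}CC(\mathcal F)_{\mathcal O(1)}$. Hence their difference is $j_*\kappa$ for some class $\kappa$ on $P$. The excess formula on $P$ only yields $c_1(\mathcal O(-1))\cap\kappa=q^*0_X^!CC(\mathcal F)-\overline{df}^{\,!}CC(\mathcal F)_{\mathcal O(1)}$. This does not determine $q_*\kappa$, and $q_*\kappa$ is precisely the zero-section term you need.

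In particular, the matching you call the delicate point is false if it refers to the Gysin restriction of $\pi^!(df)^!CC(\mathcal F)$ to $P$. By functoriality that restriction is $q^*0_X^!CC(\mathcal F)$. For $\mathcal F=\Lambda$ and the projection $\mathbb P^1\times\mathbb P^1\to\mathbb P^1$, this is $q^*\bigl(c_2(T^*X)\cap[X]\bigr)\neq0$, while $\overline{df}^{\,!}CC(\Lambda)_{\mathcal O(1)}=0$. The unrefined identity in ${\rm CH}_m(T^*Y\times_YX)\cong{\rm CH}_0(X)$ is easy, but it forgets exactly the information in ${\rm CH}_s(X)$.

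\paragraph{What is missing.} You need an independent computation of both $(dh)^!CC(\mathcal F)$ and the projective class in terms of the $b_j$. This is where $h$-transversality must be used as a finiteness statement, not only to get $\mathcal O(-1)\subset q^*f^*T^*(Y/S)$. The paper does this in three steps.
\begin{enumerate}
\item It restricts \eqref{eq:pcccal} to $\mathbb P(T^*S\times_SX)$, which the $\mathbb P(C_a)$ avoid. This gives $\sum_jb_jt^j=c_t(T^*S\times_SX)\sum_ja_jt^j$ with $a_0=(dh)^!CC(\mathcal F)$.
\item It pushes $\overline{df}^{\,!}CC(\mathcal F)_{\mathcal O(1)}$ forward along the finite map $\rho$ to $\mathbb P(f^*T^*(Y/S))$. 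There the quotient bundle is globally defined and the class equals $\sum_j\xi^j\cap q_S^*a_j$.
\item Using $c_t(T^*Y\times_YX)=c_t(T^*S\times_SX)\,c_t(f^*T^*(Y/S))$, the formula reduces to the explicit pushforward \eqref{eq:projective-bundle-computation}.
\end{enumerate}
Your Whitney factorization $c_r=c_1(\mathcal O(-1))\,c_{r-1}(\text{quotient})$ is the geometric form of that last computation. With these inputs, your picture can be made rigorous by pushing everything to $f^*T^*(Y/S)$, where ${\rm CH}_m\cong{\rm CH}_s(X)$. Without them, Step 1 is unsupported.
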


\begin{proof}
Since $h:X\to S$ is $SS(\mathcal F)$-transversal, the inverse image of $SS(\mathcal F)$
under
\begin{equation}\label{eq:auto-019}
dh:T^*S\times_SX\longrightarrow T^*X
\end{equation}
is contained in the zero section,
hence every projectivized
nonzero component of $CC(\mathcal F)$ is disjoint from
$\mathbb{P}(T^*S\times_S X)$. Hence \eqref{eq:pcccal} restricts to
\begin{equation}\label{eq:auto-020}
\sum_j \xi^j\cap q^*b_j=0
\end{equation}
on $\mathbb{P}(T^*S\times_S X)$.
By the projective bundle formula, the polynomial
$\sum_j b_jt^j$ is divisible by
$c_t(T^*S\times_S X)$.
Write
\begin{equation}\label{eq:auto-021}
\sum_{j=0}^n b_jt^j
=
c_t(T^*S\times_SX)\sum_{j\geq0}a_jt^j,
\qquad
a_j\in\operatorname{CH}_{s+j}(X).
\end{equation}
The constant coefficient is the refined intersection with
$dh$:
\begin{equation}\label{eq:auto-022}
a_0=(dh)^!CC(\mathcal F),
\qquad
cc_{X/S}(\mathcal F)=(-1)^sa_0.
\end{equation}
Since $g:Y\to S$ is smooth of relative dimension $r$, there is an exact
sequence of vector bundles on $X$
\begin{equation}\label{eq:auto-023}
0\longrightarrow T^*S\times_SX
\longrightarrow T^*Y\times_YX
\longrightarrow f^*T^*(Y/S)
\longrightarrow0.
\end{equation}
Hence
\begin{equation}\label{eq:auto-024}
c_t(T^*Y\times_YX)
=
c_t(T^*S\times_SX)c_t(f^*T^*(Y/S)).
\end{equation}
It follows from \eqref{eq:relative-characteristic-class} that
\begin{equation}
\label{eq:relative-characteristic-aj}
\overline{cc}_{X/Y}(\mathcal F)
=
(-1)^{r+s}
\sum_{j\geq r}
s_{j-r}(f^*T^*(Y/S))\cap a_j.
\end{equation}
The projection
$T^*Y\times_YX\longrightarrow f^*T^*(Y/S)$
has kernel
$T^*S\times_SX$, and
\begin{equation}\label{eq:auto-025}
(df)^{-1}SS(\mathcal F)\cap(T^*S\times_SX)\subset0(X)
\end{equation}
by the $SS(\mathcal F)$-transversality of $h$,
hence it
is finite on $(df)^{-1}SS(\mathcal F)$. 
Let
\begin{equation}\label{eq:auto-026}
q_{S}:\mathbb{P}(f^*T^*(Y/S))\longrightarrow X
\end{equation}
be the projective bundle projection, and put
$\xi=c_1(\mathcal O_{\mathbb{P}(f^*T^*(Y/S))}(1))$.
The preceding finite projection induces a finite morphism
\begin{equation}\label{eq:auto-027}
\rho:
\mathbb{P}((df)^{-1}SS(\mathcal F))
\longrightarrow
\mathbb{P}(f^*T^*(Y/S))
\end{equation}
which identifies the tautological line bundles; hence we may slightly abuse the tautological bundle notation here. 
By the construction of the classes $b_j$ we have
\begin{equation}\label{eq:auto-028}
\rho_*
\bigl(\overline{df}^!CC(\mathcal F)_{\mathcal O(1)}\bigr)
=
\sum_{j\geq0}\xi^j\cap q_{S}^*a_j.
\end{equation}
On $\mathbb{P}(f^*T^*(Y/S))$ we have the canonical quotient bundle
$q_{S}^*f^*T^*(Y/S)/\mathcal O(-1)$,
projective bundle formula gives
\begin{equation}
\label{eq:projective-bundle-computation}
(q_{S})_*
\left[
c_{r-1}\left(
q_{S}^*f^*T^*(Y/S)-\mathcal O(-1)
\right)
\cap\xi^j
\right]
=
\begin{cases}
1, & j=0,\\
0, & 0<j<r,\\
-c_r(f^*T^*(Y/S))
\,s_{j-r}(f^*T^*(Y/S)), & j\geq r.
\end{cases}
\end{equation}
Applying \eqref{eq:projective-bundle-computation} and the definition of
$cc^Z_{X/Y/S}(\mathcal F)$, we obtain
\begin{equation}\label{eq:auto-029}
\tau_*cc^Z_{X/Y/S}(\mathcal F)
=
(-1)^s
\left[
a_0
-
c_r(f^*T^*(Y/S))
\cap
\sum_{j\geq r}
s_{j-r}(f^*T^*(Y/S))\cap a_j
\right].
\end{equation}
By
\eqref{eq:relative-characteristic-aj},
the second term on the right is
$c_r\bigl((f^*T^*(Y/S))^\vee\bigr)
\cap \overline{cc}_{X/Y}(\mathcal F)$.
Together with $cc_{X/S}(\mathcal F)=(-1)^sa_0$, this proves
\eqref{eq:relative-characteristic-fibration}.

It remains to prove the restriction assertion. Put $U=X\setminus Z$.
On $U$, the morphism
$f|_U:U\longrightarrow Y$
is $SS(\mathcal F|_U)$-transversal. Restricting
\eqref{eq:projective-characteristic-polynomial} to $U$, the
zero-section excess formula gives
\begin{equation}\label{eq:auto-030}
\sum_j(b_j|_U)t^j
=
c_t(T^*Y\times_YU)\sum_{j\geq0}d_jt^j,
\qquad
d_0=(df)^!CC(\mathcal F|_U).
\end{equation}
Consequently,
\begin{equation}\label{eq:auto-031}
\begin{aligned}
\overline{cc}_{X/Y}(\mathcal F)|_U
&=
(-1)^md_0
=
cc_{U/Y}(\mathcal F|_U).
\end{aligned}
\end{equation}
This proves the restriction statement.

Finally, if $\dim Z<r+s$, then
$\operatorname{CH}_{r+s}(Z)=0$.
Hence we have an isomorphism
\begin{equation}\label{eq:auto-032}
\operatorname{CH}_{r+s}(X)
\xrightarrow{\sim}
\operatorname{CH}_{r+s}(X\setminus Z),
\end{equation}
by the localization sequence,
and $\overline{cc}_{X/Y}(\mathcal F)$ is the unique extension of
$cc_{(X\setminus Z)/Y}(\mathcal F|_{X\setminus Z})$.
Equation
\eqref{eq:relative-characteristic-fibration-transversal}
is the special case $Z=\varnothing$.
\end{proof}

\begin{remark}
    In fact, the construction of $\overline{cc}_{X/Y}(\mathcal F)$ can be regarded as a generalized definition of the characteristic class, weakening the transversality condition.
\end{remark}

\begin{proposition}\label{prop:geoNA-affine-line}
Let $S=\Spec k$, $Y=\mathbb A^1_k$ with coordinate $t$, and let
$Z$ contain the characteristic locus of $f$.
Writing $df:X\longrightarrow X\times_{\A^1_k} T^*\A^1_k\longrightarrow T^*X$ for the section defined by $f^*dt$, one has
\begin{equation}\label{eq:geoNA-affine-line}
cc^Z_{X/\mathbb A^1_k/k}(\mathcal F)
=(df)^!CC(\mathcal F)\quad\text{in }{\rm CH}_0(Z).
\end{equation}
\end{proposition}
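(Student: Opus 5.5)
Here $S=\Spec k$, $s=0$, $Y=\mathbb A^1_k$ and $r=1$, so the plan is to specialize Definition~\ref{def:projectiveGNA} and observe that it degenerates to the refined Gysin pullback along $df$. Since $r=1$, the Chern class $c_{r-1}=c_0=1$, the sign $(-1)^s=1$, and the formula \eqref{eq:defGNA} becomes
\begin{equation*}
cc^Z_{X/\mathbb A^1_k/k}(\mathcal F)=q_{Z*}\bigl[\overline{df}^{\,!}CC(\mathcal F)_{\mathcal O(1)}\bigr].
\end{equation*}
Moreover $T^*\mathbb A^1_k\times_{\mathbb A^1_k}X$ is a trivial line bundle with global basis $dt$. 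Consequently $q:\mathbb P(T^*Y\times_YX)\to X$ is an isomorphism, $\mathcal O(-1)$ is the trivial line bundle spanned by $f^*dt$, and $\mathcal O(1)$ is trivialized by the dual basis $e$.

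The next step is to identify $\overline{df}$ and the twisted cycle under this trivialization. By construction, $\overline{df}$ sends a point $x$ to $df_x|_{\lambda}$, where $\lambda$ is the line spanned by $dt$. Under $\mathcal Hom(\mathcal O(-1),q^*T^*X)\cong T^*X$, induced by $\xi\mapsto\xi\otimes e$ in \eqref{eq:auto-003}, this is exactly the section $x\mapsto df_x(dt)=f^*dt$, that is, the section $df:X\to T^*X$ in the statement. With the same global trivialization, \eqref{eq:auto-003} identifies $(C_a)_{\mathcal O(1)}$ with $C_a\times_X X=C_a$, so $CC(\mathcal F)_{\mathcal O(1)}=CC(\mathcal F)$ as cycles on $T^*X$. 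This is consistent with \eqref{eq:cone-twist}, since $\mathcal O(-a)$ is trivial. The Cartesian square \eqref{eq:projective-gysin-square} becomes the square defining $(df)^{-1}SS(\mathcal F)$ as the pullback of $SS(\mathcal F)$ along $df$. The refined Gysin maps therefore agree:
\begin{equation*}
\overline{df}^{\,!}CC(\mathcal F)_{\mathcal O(1)}=(df)^!CC(\mathcal F)\in\mathrm{CH}_0\bigl((df)^{-1}SS(\mathcal F)\bigr).
\end{equation*}

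Finally, the support $(df)^{-1}SS(\mathcal F)$ is the characteristic locus, which is contained in $Z$ by hypothesis. The map $q_{Z*}$ is then the pushforward along the closed immersion of this support (with its reduced structure) into $Z$. Under the identification of $\mathrm{CH}_0(Z)$ with Chow groups of the supports, this pushforward is exactly how $(df)^!CC(\mathcal F)$ is regarded as a class in $\mathrm{CH}_0(Z)$, which gives \eqref{eq:geoNA-affine-line}.

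The only point needing care is the second step: checking that the twisting construction and the refined Gysin pullback are compatible with the chosen global trivialization. In particular, the choice of the basis $e$ must not introduce a scalar, so that $\overline{df}$ corresponds to $f^*dt$ itself rather than to a unit multiple of it. Even a unit multiple would give the same Gysin class, because $CC(\mathcal F)$ is conical and hence stable under scaling.
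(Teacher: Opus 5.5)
Your proposal is correct and follows the paper's proof. The paper likewise observes that $dt$ trivializes $\mathcal O(-1)$, identifies $\mathbb{P}(T^*\mathbb A^1_k\times_{\mathbb A^1_k}X)$ with $X$ and $\overline{df}$ with $df$, and notes that $r=1$ and $s=0$ reduce the Chern class to $c_0=1$; you additionally spell out the identification $CC(\mathcal F)_{\mathcal O(1)}=CC(\mathcal F)$ and the pushforward into $Z$, which the paper leaves implicit.
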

\begin{proof}
The section defined by $dt$ identifies
$\mathbb{P}(T^*\mathbb A^1_k\times_{\mathbb A^1_k}X)$ with $X$,
trivializes $\mathcal O(-1)$, and identifies $\overline{df}$ with $df$.
In \eqref{eq:defGNA}, $r=1$, $s=0$ and the Chern class involved is $c_0=1$.
\end{proof}

\begin{theorem}[Saito's isolated Milnor formula]\label{thm:SMF}
Assume $S=\Spec k$ and $Y$ is a smooth curve.
Let $Z$ be a finite set of closed points of $X$, and assume
$f$ is $SS(\mathcal F)$-transversal outside $Z$. Then
\begin{equation}\label{eq:SMF}
cc^Z_{X/Y/k}(\mathcal F)
=-\sum_{x\in Z}\operatorname{dimtot}R\Phi_{\bar x}(\mathcal F,f)\,[x]
\quad\text{in }{\rm CH}_0(Z).
\end{equation}
Here total dimension is computed after derived reduction of $\Lambda$
to its residue field, with alternating signs for complexes.
\end{theorem}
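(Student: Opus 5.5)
The plan is to reduce the statement to Saito's Milnor formula \eqref{introSaiM}, i.e.\ \cite[Theorem~5.9]{Sai17a}, by working locally around each point $x\in Z$ and identifying $cc^Z_{X/Y/k}(\mathcal F)$ near $x$ with the intersection number $(CC(\mathcal F),df)_{T^*X,x}$.

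\textbf{Localization.} Since $Z$ is a finite set of closed points, ${\rm CH}_0(Z)=\bigoplus_{x\in Z}\mathbb Z[x]$. By Proposition~\ref{prop:geoNA-elementary}(2), together with compatibility of the construction with restriction to open subsets (refined Gysin maps and Chern classes commute with flat pullback), it suffices to compute the coefficient of $[x]$ after replacing $X$ by an open neighbourhood $U$ of $x$ with $U\cap Z=\{x\}$. Shrinking $Y$ as well, we may assume $Y$ is an open subscheme of a curve with an étale coordinate $t$, so that $dt$ trivializes $T^*Y$. Both sides are insensitive to étale localization on $Y$: the vanishing cycles $R\Phi(\mathcal F,f)$ depend only on $f$ near $x$, and $T^*Y$ is unchanged.

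\textbf{Reduction to the affine line.} With $T^*Y$ trivialized by $dt$, the argument of Proposition~\ref{prop:geoNA-affine-line} applies verbatim. Here $S=\Spec k$ and $r=1$, so $s=0$ and the Chern class in \eqref{eq:defGNA} is $c_0=1$. The bundle $\mathbb P(T^*Y\times_YX)$ is identified with $X$, and $\overline{df}$ is identified with the section $df=f^*dt$ of $T^*X$. Hence
\[
cc^{\{x\}}_{U/Y/k}(\mathcal F|_U)=(df)^!CC(\mathcal F|_U)\in{\rm CH}_0(\{x\}).
\]
By the transversality hypothesis, the support $(df)^{-1}SS(\mathcal F)\cap U$ is the single point $x$. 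The refined Gysin pullback along the regular section $df$ is therefore the proper intersection number $(CC(\mathcal F),df)_{T^*X,x}[x]$ in the sense of \cite{Sai17a}; in Saito's setting this is the same intersection product used in \cite[Theorem~5.9]{Sai17a}.

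\textbf{Applying Saito's formula.} By \cite[Theorem~5.9]{Sai17a}, this intersection number equals $-\operatorname{dimtot}R\Phi_{\bar x}(\mathcal F,f)$. Summing over $x\in Z$ gives \eqref{eq:SMF}.

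\textbf{Main obstacle.} The only delicate point is that Saito states the Milnor formula for sheaves of $\Lambda$-modules with $\Lambda$ a finite field, whereas here $\Lambda$ is a finite local ring and $\mathcal F$ is a complex in $D_{\rm ctf}$. I would handle this by derived reduction $\mathcal F\mapsto\mathcal F\otimes^L_\Lambda\Lambda_0$, where $\Lambda_0$ is the residue field. Two facts make this work. First, $SS$ and $CC$ of $\mathcal F$ are defined through this reduction, so $CC(\mathcal F)=CC(\mathcal F\otimes^L\Lambda_0)$. Second, $R\Phi$ commutes with $\otimes^L\Lambda_0$, and total dimension is defined via the reduction with alternating signs. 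For a complex over $\Lambda_0$, the formula then follows from the case of single sheaves by additivity: Proposition~\ref{prop:geoNA-elementary}(3) on the geometric side and additivity of $\operatorname{dimtot}$ on the cohomological side, using the truncation triangles. The transversality conditions persist for the constituents, since $SS(\mathcal H^i\mathcal F)\subseteq SS(\mathcal F)$ is not automatic; in case of difficulty, I would instead use that Saito's formula already holds for complexes in $D_{\rm ctf}(X,\Lambda_0)$.
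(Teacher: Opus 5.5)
Your proposal is correct and follows the paper's argument: localize at each $x\in Z$, choose an \'etale coordinate near $f(x)$ so that, exactly as in Proposition~\ref{prop:geoNA-affine-line}, $cc^Z_{X/Y/k}(\mathcal F)$ becomes $(df)^!CC(\mathcal F)$, i.e.\ Saito's local intersection numbers $(CC(\mathcal F),df)_{T^*X,x}$, and conclude by \cite[Theorem~5.9]{Sai17a}. The d\'evissage through cohomology sheaves in your last paragraph is unnecessary (and your sentence claiming the transversality conditions persist for the constituents does not follow as written), but your fallback is the right route, since Saito's theorem is already stated for constructible complexes and the reduction to the residue field is the convention fixed in the statement.
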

\begin{proof}
Near $f(x)$ choose an \'etale coordinate on $Y$, we may assume
$Y=\A^1_k$.
The consequence follows from Saito's Milnor formula \cite[Theorem 5.9]{Sai17a}.
\end{proof}

We expect the following Milnor type formula for non-isolated singular/characteristic points holds.
\begin{conjecture}\label{conj:MFforNI} 
Let $S$ be a smooth connected $k$-scheme of dimension $s$.
Consider the commutative diagram \eqref{intro:deltadiag}.
Let $\mathcal F\in D_{\rm ctf}(X,\Lambda)$ such that $X\setminus Z\to Y$ is $SS(\mathcal F|_{X\setminus Z})$-transversal and that $X\to S$ is $SS(\mathcal F)$-transversal.
Then we have an equality
	\begin{equation}\label{MFforNonIsolated}
\widetilde{C}^Z_{X/Y/S}(\mathcal F)=\widetilde{{\rm cl}}(cc^Z_{X/Y/S}(\mathcal F))\qquad{\rm in}\qquad H^0_Z(X,\mathcal K_{X/Y/S}),
\end{equation}
	where $\widetilde{{\rm cl}}$ is the composition
	${\rm CH}_s(Z)\xrightarrow{\rm cl} H^0_Z(X,\mathcal K_{X/S})\xrightarrow{\eqref{eq:distriK}} H^0_Z(X,\mathcal K_{X/Y/S})$.
\end{conjecture}
When $S=\Spec k$, $Y=\mathbb A_k^1$ and $Z=\{x\}$, then  Conjecture \ref{conj:MFforNI} follows from Saito's isolated Milnor formula \eqref{eq:SMF} and the cohomological Milnor formula \eqref{eq:CMF}.

\begin{remark}
Let $f:X\to Y$ be a separated morphism between smooth schemes over $k$, $X$ is connected.
Let $Z\subseteq X$ be a closed subset and $\mathcal F\in D_{\rm ctf}(X,\Lambda)$.
Assume that $f$ is universally locally acyclic relatively to $\mathcal F$ outside $Z$.	
Let $n={\rm dim}X$.
We expect that there is a $n$-cycle $CC^Z_{X/Y}(\mathcal F)$ supported on $T^{\ast} X\times_XZ$
such that 
	\begin{equation}\label{eq:display-704}
\widetilde{{\rm cl}}(0_X^!CC^Z_{X/Y}(\mathcal F))=\widetilde{C}^Z_{X/Y/k}(\mathcal F) \quad{\rm in}\quad H^0_Z(X,\mathcal K_{X/Y/k}).
\end{equation}	
If $Y$ is a smooth curve and $Z$ is a finite set of closed points of $X$, then 
	\begin{equation}\label{eq:display-708}
CC^Z_{X/Y}(\mathcal F)=-\sum_{x\in Z}\operatorname{dimtot}R\Phi_{\bar{x}}(\mathcal F,f)\cdot [T^{\ast}_xX].
\end{equation}
If $f={\rm id}$ and $Z$ is the smallest closed subset of $X$ such that $\mathcal F|_{X\setminus Z}$ is smooth, then
	\begin{equation}\label{eq:display-712}
CC^Z_{X/Y}(\mathcal F)=CC(\mathcal F)-{\rm rank}\mathcal F \cdot CC(\Lambda).
\end{equation}	
\end{remark}

\begin{proposition}
	Consider a Cartesian diagram in ${\rm Sm}_k$
	\begin{equation}\label{eq:pullbackFixDia}
\begin{gathered}
			\xymatrix{ X'\ar[d]_-{f'} \ar[r]^-{i} & X  \ar[d]^-f\ar@{}[dl]|\Box \\
				S'\ar[r]_-{\delta} &S.
			}
		\end{gathered}
\end{equation}
	We assume that $f$ and $f'$ are smooth morphisms, and $S$ and $S'$ are connected  of  dimension $s$ and $s'$ respectively.
	Let $\mathcal F\in D_{\rm ctf}(X,\Lambda)$.
	Assume that $X\to S$ is $SS(\mathcal F)$-transversal and $i$ is properly $SS(\mathcal{F})$-transversal.
	Then we have
	\begin{equation}\label{eq:display-736}
		 {i}^!cc_{X/S}(\mathcal{F})=cc_{X'/S'}(i^*\mathcal{F}) \quad{\rm in}\quad  {\rm CH}_{s'}(X'),
	\end{equation}
	where $i^!\colon {\rm CH}_s(X)\to {\rm CH}_{s'}(X')$ is the refined Gysin pullback.
\end{proposition}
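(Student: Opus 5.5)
We would prove this by reducing both sides to refined Gysin pullbacks along zero sections of relative cotangent bundles and then invoking the compatibility of characteristic cycles with properly transversal pullback. Since $f$ is smooth, the Cartesian square \eqref{eq:display-561} gives $cc_{X/S}(\mathcal F)=(-1)^s\,0_{X/S}^!CC(\mathcal F)$. Likewise $f'$ is smooth, so $cc_{X'/S'}(i^*\mathcal F)=(-1)^{s'}\,0_{X'/S'}^!CC(i^*\mathcal F)$. Because $i$ is properly $SS(\mathcal F)$-transversal, Saito's pullback theorem \cite[Theorem 7.6]{Sai17a} gives
\begin{equation*}
CC(i^*\mathcal F)=i^!CC(\mathcal F),
\end{equation*}
where $i^!$ denotes $(-1)^{\dim X-\dim X'}$ times the pushforward under $di:T^*X\times_XX'\to T^*X'$ of the refined Gysin pullback along $T^*X\times_XX'\to T^*X$. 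Since the square is Cartesian and $f$ is smooth, $\dim X-\dim X'=s-s'$, so this sign is $(-1)^{s-s'}$ and it exactly absorbs the discrepancy between $(-1)^s$ and $(-1)^{s'}$. It therefore remains to prove
\begin{equation*}
i^!\,0_{X/S}^!CC(\mathcal F)=0_{X'/S'}^!\,di_*\bigl(i^!_{T^*}CC(\mathcal F)\bigr),
\end{equation*}
where $i^!_{T^*}$ is the refined pullback along $T^*X\times_XX'\to T^*X$.

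For this identity we use the relative cotangent sequence. Since the square is Cartesian with $f$ smooth, $T^*(X'/S')=T^*(X/S)\times_XX'$. The composite $T^*X\times_XX'\xrightarrow{di}T^*X'\to T^*(X'/S')$ is therefore the base change of the projection $T^*X\to T^*(X/S)$. Using this, we write $0_{X'/S'}^!$ composed with $di_*$ as the refined pullback along the zero section of $T^*(X/S)\times_XX'$, applied through the projection $T^*X\times_XX'\to T^*(X/S)\times_XX'$. The key point is that $0_{X'/S'}^!\circ di_*=0^!\circ \pi_*$, where $\pi:T^*X\times_X X'\to T^*(X'/S')$. This follows from the push–pull compatibility of refined Gysin maps (Fulton, Thm.~6.2(a)) applied to the Cartesian square formed by the zero-section fiber: the preimage of $0_{X'/S'}$ under $di$ equals $T^*S'\times_{S'}X'\times_{T^*X'}(T^*X\times_XX')$, and properness is ensured because we only push forward cycles supported on $SS(\mathcal F)\times_XX'$, which is finite over its image by the transversality of $i$ (\cite[Lemma 3.1]{Sai17a}).

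After this reduction, both sides become refined Gysin pullbacks of $CC(\mathcal F)$ along the two regular embeddings $X'\to X$ (via $i$, which is lci since $X,X'$ are smooth) and $X\to T^*(X/S)$. The equality then follows from the commutativity of refined Gysin maps \cite[Theorem 6.4]{Ful98} together with functoriality \cite[Theorem 6.5]{Ful98}, applied to the diagram
\begin{equation*}
X'\to X\to T^*(X/S)\leftarrow T^*X .
\end{equation*}
We expect the main obstacle to be this bookkeeping step. It requires identifying the support intersection so that all the pushforwards involved are proper, which rests on the finiteness supplied by proper transversality. It also requires tracking that the dimension of $SS(\mathcal F)\times_XX'$ equals $\dim X'$, so that no excess-intersection term appears.
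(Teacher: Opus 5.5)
Your proposal is correct and follows essentially the same route as the paper: you use Saito's pullback formula \cite[Theorem~7.6]{Sai17a} with the sign $(-1)^{\dim X-\dim X'}=(-1)^{s-s'}$, then push--pull of the refined Gysin map along $di$ (finite on ${\rm pr}^{-1}SS(\mathcal F)$) through the Cartesian square with $T^*S\times_SX'$, then discard $d\delta_*$ because the class lies on the zero section, and finally apply commutativity of refined Gysin maps after identifying $T^*(X'/S')\simeq T^*(X/S)\times_XX'$. The only blemishes are notational: the identity you write as $0^!_{X'/S'}\circ di_*=0^!\circ\pi_*$ should read $0^!_{X'/S'}\circ di_*=d\delta_*\circ 0^!$ with $0^!$ refined via $\pi$, and since $i$ is only l.c.i.\ rather than a regular embedding, the commutativity should be quoted in its l.c.i.\ form \cite[\S6.6]{Ful98} rather than as Theorem~6.4.
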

\begin{proof}
	We consider the following diagram
	\begin{equation}\label{eq:display-746}
	\begin{tikzcd}[row sep=2em,column sep=1em]
		&T^{\ast}X'\arrow[dd]  && T^{\ast}X\times_{X}X' \arrow[dd] \arrow[ll, swap,"di"] \arrow[rr, "{\rm pr}"]&& T^{\ast}X\arrow[dd]\\
		T^{\ast}S'\times_{S'}X' \arrow[dd] \arrow[ur]&& T^{\ast}S\times_{S}X' \arrow[ur]  \arrow[ll, crossing over, swap, "d\delta" near start] \arrow[rr,crossing over] &&T^{\ast}S\times_SX\arrow[ur]& \\
		& T^{\ast}(X'/S') \arrow[rr, "\simeq" near start]  && T^{\ast}(X/S)\times_XX'  \arrow[rr] &&T^{\ast}(X/S) \\
		X'\arrow[ur,swap, "0_{X'/S'}"]   \arrow[rr, "="]&& X' \arrow[uu,<-,crossing over] \arrow[ur, swap, "0_{X/S} \times 1"]  \arrow[rr,"i"] && X \arrow[uu,<-,crossing over] \arrow[ur, swap,"0_{X/S}"]&
	\end{tikzcd}
	\end{equation}
	Note that the square containing the morphisms $di$ and $d\delta$ is Cartesian. In the following calculations,
		although $di$ and $d\delta$ are not proper, we can still apply $di_\ast$ and $d\delta_\ast$
	since $di$ is finite on the support of ${\rm pr}^{-1}(SS(\mathcal F))$ and $d\delta$ is finite on the zero section $X'$ of $T^{\ast} S\times_S X'$.
	We have
	\begin{equation}\label{eq:display-758}
\begin{split}
		cc_{X'/S'}(i^*\mathcal{F})&=(-1)^{s'}\cdot 0^!_{X'/S'}CC(i^*\mathcal F)\\
		&\overset{(a)}{=}(-1)^{s'}\cdot 0^!_{X'/S'}(di_*{\rm pr}^!CC(\mathcal F)\cdot (-1)^{-\text{dim}(X')+\text{dim}(X)})\\
		&=d\delta_*0_{X'/S'}^!{\rm pr}^!CC(\mathcal F)\cdot (-1)^{s'-{\rm dim}(X')+{\rm dim}(X)}\\
		&\overset{(b)}{=}0_{X'/S'}^!{\rm pr}^!CC(\mathcal F)\cdot (-1)^{s}=0_{X/S}^!{\rm pr}^!CC(\mathcal F)\cdot (-1)^{s}\\
		&=i^!(0_{X/S}^!CC(\mathcal F)\cdot (-1)^{s})=i^!cc_{X/S}(\mathcal F).
		\end{split}
\end{equation}
	where $(a)$ follows from \cite[Theorem 7.6]{Sai17a} and $(b)$ follows from the fact that $0^!_{X'/S'}{\rm pr}^!CC(\mathcal F)$ is supported on the zero section of $T^{\ast} S\times_S X'$.
\end{proof}

\subsection{}\label{subsec:pfcharclass}
Consider a commutative diagram in ${\rm Sm}_k$:
\begin{equation}\label{eq:display-773}
\begin{gathered}
		\xymatrix{
			X\ar[rr]^-f\ar[rd]_-h&&Y\ar[ld]^-g\\
			&S&
		}.
	\end{gathered}
\end{equation}
Let $\mathcal F\in D_{\rm ctf}(X,\Lambda)$ such that $h$ is $SS(\mathcal F)$-transversal.
Assume $f$ is proper on ${\rm supp}(\mathcal F)$.
By \cite[Lemma~3.8]{Sai17a}, $g$ is $f_\circ SS(\mathcal F)$-transversal.
By \cite[Lemma~2.2(ii)]{Bei16}, $SS(Rf_\ast\mathcal F)\subseteq f_\circ SS(\mathcal F)$.
Thus $g:Y\to S$ is also $SS(Rf_\ast\mathcal F)$-transversal and the class $cc_{Y/S}(Rf_\ast\mathcal F)$ is well-defined.
\begin{proposition}\label{prop:pfcharclass}
Consider the assumptions in section \ref{subsec:pfcharclass}.
Assume moreover that $Y$ is projective, $f: X\to Y$ is quasi-projective 
and ${\rm dim}f_\circ SS(\mathcal F)\leq {\rm dim}Y$. Then we have 
\begin{equation}\label{eq:display-790}
f_\ast cc_{X/S}(\mathcal F)=cc_{Y/S}(Rf_\ast\mathcal F)\quad{\rm in}\quad {\rm CH}_s(Y).
\end{equation}
\end{proposition}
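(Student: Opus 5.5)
The plan is to reduce the identity to Saito's pushforward formula for characteristic cycles, combined with the compatibility of refined Gysin pullbacks with proper pushforward. Recall that $cc_{X/S}(\mathcal F)=(-1)^s (dh)^!CC(\mathcal F)$ and $cc_{Y/S}(Rf_*\mathcal F)=(-1)^s(dg)^!CC(Rf_*\mathcal F)$. Since $Y$ is projective and $f$ is quasi-projective, and $\dim f_\circ SS(\mathcal F)\le\dim Y$, Saito's proper pushforward theorem applies: \cite[Theorem~7.14]{Sai17a}, or the version in \cite{YZ22} where projectivity is imposed. It gives
\begin{equation*}
CC(Rf_*\mathcal F)=f_\circ CC(\mathcal F):=\mathrm{pr}_*\,(df)^!CC(\mathcal F),
\end{equation*}
where $T^*X\xleftarrow{df}T^*Y\times_YX\xrightarrow{\mathrm{pr}}T^*Y$. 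Here $(df)^!$ is the refined Gysin map along the morphism of smooth schemes $df$, which is lci. The map $\mathrm{pr}$ is proper on the support $(df)^{-1}SS(\mathcal F)$, because $f$ is proper on ${\rm supp}\,\mathcal F$. The sign conventions (the factor $(-1)^{\dim X-\dim Y}$ in Saito's formulation) must be tracked; in Saito's normalization with $CC$ carrying signs, they cancel.

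Next, I would consider the commutative diagram
\begin{equation*}
\begin{array}{ccccc}
T^*X & \xleftarrow{df} & T^*Y\times_YX & \xrightarrow{\mathrm{pr}} & T^*Y\\
\uparrow{\scriptstyle dh} & & \uparrow{\scriptstyle dg\times 1} & & \uparrow{\scriptstyle dg}\\
T^*S\times_SX & = & T^*S\times_SX & \xrightarrow{} & T^*S\times_SY
\end{array}
\end{equation*}
The right square is Cartesian, and the left composite satisfies $df\circ(dg\times1)=dh$.

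By functoriality of refined Gysin maps for lci morphisms \cite[Theorem~6.5, Proposition~6.6]{Ful98}, $(dh)^!=(dg\times1)^!\circ(df)^!$. By compatibility of refined Gysin maps with proper pushforward in a Cartesian square \cite[Theorem~6.2(a)]{Ful98}, $(dg)^!\mathrm{pr}_*=\mathrm{pr}'_*(dg\times1)^!$, where $\mathrm{pr}':T^*S\times_SX\to T^*S\times_SY$. All cycles involved are supported on subsets on which the relevant maps are proper, so one works with Chow groups of the supports throughout. Since $h$ is $SS(\mathcal F)$-transversal, $(dh)^!CC(\mathcal F)$ is supported on the zero section $X$, and $\mathrm{pr}'$ restricted there is $f$ followed by the zero section of $Y$. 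Chaining these identities gives
\begin{equation*}
f_*(dh)^!CC(\mathcal F)=(dg)^!\mathrm{pr}_*(df)^!CC(\mathcal F)=(dg)^!CC(Rf_*\mathcal F).
\end{equation*}
Multiplying by $(-1)^s$ yields the claim.

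The main obstacle is the first step: making sure that the pushforward formula $CC(Rf_*\mathcal F)=f_\circ CC(\mathcal F)$ is available under exactly these hypotheses, namely $Y$ projective, $f$ quasi-projective, and the dimension bound. Saito proves it when $f$ is projective and the bound holds; if $f$ is merely quasi-projective and proper on the support, I would compactify $X\subset\overline X$ over $Y$ and replace $\mathcal F$ by its extension by zero $j_!\mathcal F$. Since $f$ is proper on ${\rm supp}\,\mathcal F$, the support of $j_!\mathcal F$ is closed in $\overline X$, and one can shrink to a neighborhood of it. The remaining steps are formal bookkeeping with supports and signs.
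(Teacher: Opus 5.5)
Your proposal follows the paper's proof: you factor $dh=df\circ r$ with $r\colon T^*S\times_SX\to T^*Y\times_YX$, use functoriality of refined Gysin maps and their commutation with proper pushforward in the Cartesian square over $dg$, and finish with the pushforward formula $CC(Rf_*\mathcal F)=\mathrm{pr}_*(df)^!CC(\mathcal F)$. The only difference is the reference for that last step: the paper invokes \cite[Theorem~2.2.5]{Sai21} directly under the stated hypotheses ($Y$ projective, $f$ quasi-projective, the dimension bound), so your compactification detour is not needed (and as sketched it would be problematic, since a compactification $\overline X$ need not be smooth and $CC(j_!\mathcal F)$ would not be defined there).
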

\begin{proof}
Consider the following commutative diagram
\begin{equation}\label{eq:display-797}
\begin{gathered}
\xymatrix{
&X\ar[r]^-f\ar@{^(->}[d]^-{0_X}&Y\ar@{^(->}[d]^-{0_Y}\\
&T^{\ast} S\times_SX\ar[r]^-{{\rm id}\times f}\ar[d]^-{r}\ar[ld]_-{dh}&T^{\ast} S\times_SY\ar[d]^-{dg}\\
T^{\ast} X&T^{\ast} Y\times_Y X\ar[r]^-{{\rm id}\times f}\ar[l]^-{df}&T^{\ast} Y
}
\end{gathered}
\end{equation}
Then we have
\begin{equation}\label{eq:display-807}
\begin{split}
	f_\ast cc_{X/S}(\mathcal F)&\overset{\eqref{eq:defRCC}}{=}(-1)^s\cdot f_\ast dh^{!}(CC(\mathcal F))=(-1)^s\cdot ({\rm id}\times f)_\ast  r^! df^{!}(CC(\mathcal F))\\
	&=(-1)^s\cdot dg^!({\rm id}\times f)_\ast df^!(CC(\mathcal F))\\
	&\overset{(a)}{=} (-1)^s\cdot dg^! CC(Rf_\ast\mathcal F)\overset{\eqref{eq:defRCC}}{=}cc_{Y/S}(Rf_\ast\mathcal F).
\end{split}
\end{equation}
where (a) follows from {\cite[Theorem 2.2.5]{Sai21}}.
\end{proof}

\begin{lemma}\label{lem:projective-relative-expression}
Consider the assumptions in section \ref{subsec:defGNA}. 
On an open neighbourhood of ${\rm supp}(\mathcal F)$, the morphism
$h:X\to S$ is smooth. Let
\begin{equation}\label{eq:auto-033}
\pi_{X/S}:T^*X\longrightarrow T^*(X/S),\qquad
 q_{S}:\mathbb{P}(f^*T^*(Y/S))\longrightarrow X
\end{equation}
be the natural projections on this neighbourhood. The restriction of
$\pi_{X/S}$ to $SS(\mathcal F)$ is finite. The relative differential
$\mathcal O(-1)\to q_{S}^*T^*(X/S)$ defines a section
\begin{equation}\label{eq:auto-034}
\overline{d_{/S}f}:\mathbb{P}(f^*T^*(Y/S))
 \longrightarrow q_{S}^*T^*(X/S)\otimes\mathcal O(1),
\end{equation}
and we have
\begin{equation}\label{eq:projective-relative-expression}
cc^Z_{X/Y/S}(\mathcal F)=(-1)^s(q_{S})_*\bigg[
 c_{r-1}\!\left(
 \frac{q_{S}^*f^*T^*(Y/S)}{\mathcal O(-1)}\right)
 \cap\overline{d_{/S}f}^{\,!}
 \bigl((\pi_{X/S})_*CC(\mathcal F)\bigr)_{\mathcal O(1)}\bigg].
\end{equation}
Here $(\pi_{X/S})_*$ is taken on the characteristic-cycle support, and
$(q_{S})_*$ is taken on the reduced intersection support over $Z$.
\end{lemma}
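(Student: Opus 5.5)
We first prove the three preliminary claims and then compare the two refined Gysin pullbacks through a commutative diagram of bundles.

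\emph{Smoothness of $h$.} Since $h$ is $SS(\mathcal F)$-transversal and $SS(\mathcal F)\cap T^*_XX={\rm supp}(\mathcal F)$, the map $dh:T^*S\times_SX\to T^*X$ is injective on fibres at every point of ${\rm supp}(\mathcal F)$. Indeed, a nonzero $\omega$ with $dh(\omega)=0$ lies in $dh^{-1}(0)\subset dh^{-1}(SS(\mathcal F))$, which contradicts transversality. Injectivity of $dh$ at a point of $X$ is an open condition. Since $X$ and $S$ are smooth over the perfect field $k$, this injectivity is equivalent to smoothness of $h$ there. We shrink $X$ to this open neighbourhood. This does not affect classes supported over ${\rm supp}(\mathcal F)$, because $CC(\mathcal F)$ and the supports in question lie over ${\rm supp}(\mathcal F)$.

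\emph{Finiteness of $\pi_{X/S}$ on $SS(\mathcal F)$.} On the smooth locus we have the exact sequence $0\to T^*S\times_SX\to T^*X\xrightarrow{\pi_{X/S}}T^*(X/S)\to0$, and $\ker\pi_{X/S}\cap SS(\mathcal F)\subset 0(X)$. For a conical closed subset, trivial intersection with the kernel implies that $\pi_{X/S}|_{SS(\mathcal F)}$ is finite: it is proper by the projectivization argument of \cite[Lemma~3.1]{Sai17a}, and it is quasi-finite. So $(\pi_{X/S})_*CC(\mathcal F)$ is a well-defined conical $n$-cycle on $T^*(X/S)$. The same argument applied to $T^*Y\times_YX\to f^*T^*(Y/S)$ is the one already used in the proof of the fibration formula. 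It gives the finite morphism $\rho:\mathbb P((df)^{-1}SS(\mathcal F))\to\mathbb P(f^*T^*(Y/S))$ identifying the tautological bundles.

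\emph{Comparison of the Gysin pullbacks.} The differentials form a commutative square of bundles:
\begin{equation*}
\begin{tikzcd}
q^*(T^*Y\times_YX)\arrow[r,"df"]\arrow[d] & q^*T^*X\arrow[d,"\pi_{X/S}"]\\
q^*f^*T^*(Y/S)\arrow[r,"d_{/S}f"] & q^*T^*(X/S)
\end{tikzcd}
\end{equation*}
After twisting by $\mathcal O(1)$ on the open locus $V\subset\mathbb P(T^*Y\times_YX)$ where $\mathcal O(-1)$ does not lie in $T^*S\times_SX$, this square gives $\pi_{X/S}\circ\overline{df}=\overline{d_{/S}f}\circ\rho'$. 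Here $\rho':V\to\mathbb P(f^*T^*(Y/S))$ is the induced projection; it is an affine bundle, with fibres being translates of $T^*S$. By \eqref{eq:auto-004}, the support $\mathbb P((df)^{-1}SS(\mathcal F))$ lies in $V$. The key claim is
\[
\rho_*\bigl(\overline{df}^{\,!}CC(\mathcal F)_{\mathcal O(1)}\bigr)=\overline{d_{/S}f}^{\,!}\bigl((\pi_{X/S})_*CC(\mathcal F)\bigr)_{\mathcal O(1)}.
\]
We will prove it by factoring $\overline{df}$ on $V$ as the graph embedding into $V\times_{\mathbb P}q_S^*T^*X\otimes\mathcal O(1)$ followed by projection, and then applying the compatibility of refined Gysin maps with proper pushforward \cite[Theorem~6.2(a)]{Ful98} to the finite map induced by $\pi_{X/S}$. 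Concretely, consider the Cartesian square with the regular embedding $\overline{d_{/S}f}$ on the bottom. Its top row is the map $(\rho',\overline{df}):V\times\cdots\to q_S^*T^*X\otimes\mathcal O(1)$, and the fibre of $\pi_{X/S}$ over the section $\overline{d_{/S}f}$ is exactly the $T^*S$-torsor $V$ embedded via $\overline{df}$. So $\overline{d_{/S}f}^{\,!}$ pulled back along $\pi_{X/S}$ equals $\overline{df}^{\,!}$, viewed as the Gysin map of the same codimension $n-s$ after accounting for the $s$ extra dimensions of the fibres. Pushforward along the finite map, which is proper on supports, then commutes with refined pullback.

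Finally, since $\rho^*$ of the tautological quotient $q_S^*f^*T^*(Y/S)/\mathcal O(-1)$ is the bundle used in \eqref{eq:defGNA}, the projection formula gives $q_{Z*}=q_{S*}\rho_*$ applied to $c_{r-1}(\cdot)\cap(\cdot)$, which yields \eqref{eq:projective-relative-expression}.

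The main obstacle is the codimension bookkeeping in the key claim. The section $\overline{df}$ has codimension $n$, while $\overline{d_{/S}f}$ has codimension $n-s$, and $V\to\mathbb P(f^*T^*(Y/S))$ has relative dimension $s$. The cleanest route is to work on $V$ and apply the excess-free base change for the regular embedding $\overline{d_{/S}f}$ along the flat map $\pi_{X/S}$. One then checks that $\overline{df}^{\,!}$ is identified with this pullback, using that $CC(\mathcal F)_{\mathcal O(1)}$ meets the $T^*S$-direction only in the zero section, so that no excess bundle appears.
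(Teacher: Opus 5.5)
Your proposal is correct and follows essentially the paper's route. Smoothness of $h$ and finiteness of $\pi_{X/S}$ on $SS(\mathcal F)$ come from transversality. A Cartesian square identifies $\overline{df}$, up to the smooth projection, with the base change of $\overline{d_{/S}f}$ along $\pi_{X/S}\otimes 1$, and no excess bundle appears because the codimensions match. Fulton's compatibility of refined Gysin maps with proper pushforward along the finite map $\rho$ and the projection formula then finish the argument.

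Your explicit projectivized square over the $T^*S$-torsor $V$, with $\iota=\mathrm{pr}_2\circ\overline{df}$, spells out the step the paper only asserts after its linear square \eqref{eq:auto-037}. One small correction: transversality is not what rules out an excess bundle, since the matching codimensions do that. It is used only for properness on supports and to place the support inside $V$.
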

\begin{proof}
The morphism $h$ is smooth on an open neighbourhood of
${\rm supp}(\mathcal F)$ by \cite[Lemmas~3.6(6) and~4.2(3)]{Sai17a}, and we have
\begin{equation}\label{eq:auto-035}
0\longrightarrow T^*S\times_SX
\xrightarrow{dh}T^*X
\xrightarrow{\pi_{X/S}}T^*(X/S)
\longrightarrow0.
\end{equation}
on the open neighbourhood.
Moreover,
\begin{equation}\label{eq:auto-036}
SS(\mathcal F)\cap\ker(\pi_{X/S})\subset0(X),
\end{equation}
so $\pi_{X/S}|_{SS(\mathcal F)}$ is finite by 
\cite[proof of Lemma~1.2(ii)]{Bei16}.

The following square is Cartesian, since its vertical kernels are
identified by $dh$:
\begin{equation}\label{eq:auto-037}
\begin{tikzcd}[column sep=large]
T^*Y\times_YX\arrow[r,"df"]\arrow[d]
&T^*X\arrow[d,"\pi_{X/S}"]\\
f^*T^*(Y/S)\arrow[r,"d_{/S}f"']
&T^*(X/S).
\end{tikzcd}
\end{equation}
The tautological line bundle therefore gives
\begin{equation}\label{eq:auto-038}
\mathcal O(-1)
\longrightarrow q_{S}^*f^*T^*(Y/S)
\longrightarrow q_{S}^*T^*(X/S),
\end{equation}
hence we have a section $\overline{d_{/S}f}$ as in the statement.
Since $\pi_{X/S}$ is finite on $SS(\mathcal F)$, Gysin pullback commutes  with 
proper pushforward
\cite[Theorem~6.2]{Ful98}, hence we have
\begin{equation}\label{eq:auto-039}
\rho_*\,df^!CC(\mathcal F)
=
(d_{/S}f)^!(\pi_{X/S})_*CC(\mathcal F).
\end{equation}
By transversality,
\begin{equation}\label{eq:auto-040}
(df)^{-1}SS(\mathcal F)\cap(T^*S\times_SX)\subset0(X),
\end{equation}
hence
$T^*Y\times_YX\longrightarrow f^*T^*(Y/S)$
induces a finite map
\begin{equation}\label{eq:auto-041}
\bar\rho:
\mathbb{P}((df)^{-1}SS(\mathcal F))
\longrightarrow
\mathbb{P}\bigl((d_{/S}f)^{-1}
(\pi_{X/S}SS(\mathcal F))\bigr),
\end{equation}
and $\bar\rho^*\mathcal O(-1)=\mathcal O(-1)$.
By the compatibility with proper pushforward of Gysin pullback
we have
\begin{equation}\label{eq:auto-042}
\bar\rho_*
\bigl(\overline{df}^{\,!}
CC(\mathcal F)_{\mathcal O(1)}\bigr)
=
\overline{d_{/S}f}^{\,!}
\bigl((\pi_{X/S})_*CC(\mathcal F)\bigr)_{\mathcal O(1)},
\end{equation}
hence
\begin{equation}\label{eq:auto-043}
\bar\rho^*
\left(
\frac{q_{S}^*f^* T^*(Y/S)}{\mathcal O(-1)}
\right)
=
\frac{q^*f^*T^*(Y/S)}{\mathcal O(-1)},
\end{equation}
and the projection formula and \eqref{eq:defGNA} therefore implies
\begin{equation}\label{eq:auto-044}
cc^Z_{X/Y/S}(\mathcal F)=(-1)^s(q_{S})_*\bigg[
c_{r-1}\left(
\frac{q_{S}^*f^*T^*(Y/S)}{\mathcal O(-1)}
\right)
\cap\overline{d_{/S}f}^{\,!}
\bigl((\pi_{X/S})_*CC(\mathcal F)\bigr)_{\mathcal O(1)}
\bigg].
\end{equation}
which is \eqref{eq:projective-relative-expression}. 
\end{proof}

\begin{proposition}\label{prop:pbGNA}
Consider a commutative diagram in ${\rm Sm}_k$
\begin{equation}\label{eq:pullbackFixDiaXYS}
\begin{tikzcd}[column sep=large]
 X'\arrow[r,"i_X"]\arrow[d,"f'"']&X\arrow[d,"f"]\\
 Y'\arrow[r,"i_Y"]\arrow[d,"g'"']&Y\arrow[d,"g"]\\
 S'\arrow[r,"\delta"']&S,
\end{tikzcd}
\end{equation}
whose squares are Cartesian. Put $Z'=Z\times_XX'$, and write
$s'=\dim S'$. Assume that $X',Y',S'$ are smooth and pure-dimensional,
that $S'$ is connected,
that $\dim X'-\dim X=s'-s$, and that $i_X$ is properly
$SS(\mathcal F)$-transversal. Then
\begin{equation}\label{eq:pbGNA}
 i_X^!cc^Z_{X/Y/S}(\mathcal F)
 =cc^{Z'}_{X'/Y'/S'}(i_X^*\mathcal F)
 \quad\text{in }{\rm CH}_{s'}(Z').
\end{equation}
\end{proposition}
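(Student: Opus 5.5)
We will argue in the same way as the proposition on pullback of relative characteristic classes proved just above. The first step is to reduce to the relative expression of Lemma~\ref{lem:projective-relative-expression}. Since $h$ is smooth near ${\rm supp}(\mathcal F)$ and the squares in \eqref{eq:pullbackFixDiaXYS} are Cartesian, $h'=g'\circ f'$ is smooth near ${\rm supp}(i_X^*\mathcal F)$. There are canonical identifications
\begin{equation*}
T^*(X'/S')\simeq T^*(X/S)\times_XX',\qquad f'^*T^*(Y'/S')\simeq i_X^*f^*T^*(Y/S).
\end{equation*}
Hence $\mathbb P(f'^*T^*(Y'/S'))=\mathbb P(f^*T^*(Y/S))\times_XX'$, and both the tautological bundle and the quotient bundle $q^*f^*T^*(Y/S)/\mathcal O(-1)$ pull back to their primed counterparts. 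The relative section $\overline{d_{/S'}f'}$ is the base change of $\overline{d_{/S}f}$. Both sides of \eqref{eq:pbGNA} are therefore given by formula \eqref{eq:projective-relative-expression}, with the cycles $(\pi_{X/S})_*CC(\mathcal F)$ and $(\pi_{X'/S'})_*CC(i_X^*\mathcal F)$ respectively.

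The key input is the pullback formula for characteristic cycles \cite[Theorem~7.6]{Sai17a}. It gives
\begin{equation*}
CC(i_X^*\mathcal F)=(-1)^{\dim X-\dim X'}di_{X*}{\rm pr}^!CC(\mathcal F),
\end{equation*}
because $i_X$ is properly $SS(\mathcal F)$-transversal. Composing with $\pi_{X'/S'}$, and using the commutative diagram \eqref{eq:display-746}, i.e.\ $\pi_{X'/S'}\circ di_X$ equals the base change of $\pi_{X/S}$, we get
\begin{equation*}
(\pi_{X'/S'})_*CC(i_X^*\mathcal F)=(-1)^{s-s'}\,i_X^!(\pi_{X/S})_*CC(\mathcal F).
\end{equation*}
Here $i_X^!$ is the refined Gysin pullback along $T^*(X/S)\times_XX'\to T^*(X/S)$. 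The finiteness needed for the pushforwards comes from finiteness of $\pi_{X/S}$ on $SS(\mathcal F)$, exactly as in the earlier proof. The sign $(-1)^{s-s'}$ is designed to combine with the prefactors $(-1)^s$ and $(-1)^{s'}$ in \eqref{eq:projective-relative-expression} to give the correct overall sign. The twist by $\mathcal O(1)$ commutes with this pullback, since it is defined locally by trivializing $\mathcal O(1)$.

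We then commute operations. The refined Gysin map $i_X^!$, formed via the lci morphism $i_X$ (a morphism of smooth schemes), commutes with
\begin{enumerate}
\item the refined Gysin map $\overline{d_{/S}f}^{\,!}$, by commutativity of refined Gysin maps \cite[Theorem~6.4]{Ful98};
\item capping with Chern classes of the pulled-back quotient bundle \cite[Proposition~6.3]{Ful98};
\item the proper pushforward $(q_S)_*$ to $Z$, with the primed version landing in $Z'=Z\times_XX'$ \cite[Theorem~6.2]{Ful98}.
\end{enumerate}
This yields $i_X^!cc^Z_{X/Y/S}(\mathcal F)=cc^{Z'}_{X'/Y'/S'}(i_X^*\mathcal F)$ in ${\rm CH}_{s'}(Z')$. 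Before this we must check that $i_X^*\mathcal F$ satisfies the hypotheses of \S\ref{subsec:defGNA}. Transversality of $h'$ and of $f'$ outside $Z'$ follows from the inclusion $SS(i_X^*\mathcal F)\subseteq i_X^\circ SS(\mathcal F)$ together with the Cartesian squares.

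We expect the main obstacle to be bookkeeping of supports and excess. The refined Gysin pullback $i_X^!$ must be applied to classes supported on $\mathbb P((df)^{-1}SS(\mathcal F))$ and compared with intersections on the primed projective bundle. The natural comparison is between ${\rm pr}^{-1}SS(\mathcal F)$ and the smaller set $SS(i_X^*\mathcal F)$, and a priori only an inclusion of supports holds. We would handle this by performing every intersection on the larger closed set $\mathbb P\bigl((d_{/S}f)^{-1}\pi_{X/S}SS(\mathcal F)\bigr)\times_XX'$, which maps properly to $Z'$. All equalities are established there, and only afterwards pushed forward to $Z'$, so the difference in supports is harmless.
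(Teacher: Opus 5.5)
Your proposal is correct and follows essentially the same route as the paper. Both pass to the relative expression of Lemma~\ref{lem:projective-relative-expression}, transport Saito's pullback formula \cite[Theorem~7.6]{Sai17a} through $\pi_{X/S}$ by base change, and use that the projective bundles, tautological and quotient bundles, and the sections $\overline{d_{/S}f}$ are compatible with base change along $i_X$. Both then commute $i_X^!$ past $\overline{d_{/S}f}^{\,!}$, the Chern class and $(q_S)_*$, with the signs cancelling because $\dim X'-\dim X=s'-s$; your remark about doing all intersections on the base change of the larger support $\mathbb P\bigl((d_{/S}f)^{-1}\pi_{X/S}SS(\mathcal F)\bigr)$ makes explicit a bookkeeping point the paper leaves implicit.
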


\begin{proof}
Using the Cartesian diagram \eqref{eq:pullbackFixDiaXYS}, proper
$SS(\mathcal F)$-transversality gives
\begin{equation}\label{eq:auto-045}
SS(i_X^*\mathcal F)
\subset
di_X\bigl({\rm pr}^{-1}SS(\mathcal F)\bigr),
\qquad
{\rm pr}:T^*X\times_XX'\longrightarrow T^*X.
\end{equation}
Hence
\begin{equation}\label{eq:auto-046}
SS(i_X^*\mathcal F)\cap(T^*S'\times_{S'}X')
\subset0(X'),
\qquad
(df')^{-1}SS(i_X^*\mathcal F)
\subset
0(X')\cup(T^*Y'\times_{Y'}Z').
\end{equation}
Thus $g'\circ f'$ is $SS(i_X^*\mathcal F)$-transversal and
$f'|_{X'\setminus Z'}$ is
$SS((i_X^*\mathcal F)|_{X'\setminus Z'})$-transversal. 
Pullback formula of characteristic cycles
\cite[Theorem~7.6]{Sai17a} gives
\begin{equation}\label{eq:auto-047}
CC(i_X^*\mathcal F)
=
(-1)^{\dim X'-\dim X}
(di_X)_*{\rm pr}^!CC(\mathcal F),
\end{equation}
then by base change we have
\begin{equation}\label{eq:auto-048}
(\pi_{X'/S'})_*CC(i_X^*\mathcal F)
=
(-1)^{\dim X'-\dim X}
\,i_{X/S}^!(\pi_{X/S})_*CC(\mathcal F),
\end{equation}
where
\begin{equation}\label{eq:auto-049}
i_{X/S}:
T^*(X'/S')
\simeq T^*(X/S)\times_XX'
\longrightarrow T^*(X/S)
\end{equation}
is the natural projection.
Let
$i_{\mathbb{P}}:
\mathbb{P}(f'^*T^*(Y'/S'))
\longrightarrow
\mathbb{P}(f^*T^*(Y/S))$
be the induced morphism, then
\begin{equation}\label{eq:auto-050}
\begin{tikzcd}[column sep=large]
\mathbb{P}(f'^*T^*(Y'/S'))
\arrow[r,"i_{\mathbb{P}}"]\arrow[d,"q_{S'}"']
&
\mathbb{P}(f^*T^*(Y/S))
\arrow[d,"q_{S}"]\\
X'\arrow[r,"i_X"']&X
\end{tikzcd}
\end{equation}
is Cartesian, and
$
i_{\mathbb{P}}^*\mathcal O(-1)=\mathcal O(-1)$,
$i_{\mathbb{P}}^*q_{S}^*f^*T^*(Y/S)
=
q_{S'}^*f'^*T^*(Y'/S').
$
Thus
\begin{equation}\label{eq:auto-051}
i_{\mathbb{P}}^*
\left(
\frac{q_{S}^*f^*T^*(Y/S)}{\mathcal O(-1)}
\right)
=
\frac{q_{S'}^*f'^*T^*(Y'/S')}{\mathcal O(-1)}.
\end{equation}
The relative differentials commute with base change, giving the
following Cartesian diagram:
\begin{equation}\label{eq:auto-052}
\begin{tikzcd}[column sep=large,row sep=large]
\mathbb{P}(f'^*T^*(Y'/S'))
\arrow[r,"i_{\mathbb{P}}"]
\arrow[d,"\overline{d_{/S'}f'}"']
&
\mathbb{P}(f^*T^*(Y/S))
\arrow[d,"\overline{d_{/S}f}"]
\\
q_{S'}^*T^*(X'/S')\otimes\mathcal O(1)
\arrow[r]
&
q_S^*T^*(X/S)\otimes\mathcal O(1).
\end{tikzcd}
\end{equation}
Here the lower horizontal arrow is the natural projection under
the identification
$q_{S'}^*T^*(X'/S')\otimes\mathcal O(1)
\simeq
i_{\mathbb{P}}^*
\bigl(q_S^*T^*(X/S)\otimes\mathcal O(1)\bigr)$.
Together with \eqref{eq:auto-048}, we have
\begin{equation}\label{eq:auto-053}
\overline{d_{/S'}f'}^{\,!}
\bigl((\pi_{X'/S'})_*CC(i_X^*\mathcal F)\bigr)_{\mathcal O(1)}
 =
(-1)^{\dim X'-\dim X}
i_{\mathbb{P}}^!
\left[
\overline{d_{/S}f}^{\,!}
\bigl((\pi_{X/S})_*CC(\mathcal F)\bigr)_{\mathcal O(1)}
\right].
\end{equation}
Applying \eqref{eq:projective-relative-expression} and $i_X^!(q_{S})_*=(q_{S'})_*i_{\mathbb{P}}^!$
we have
\begin{equation}\label{eq:auto-054}
cc^{Z'}_{X'/Y'/S'}(i_X^*\mathcal F)
=
(-1)^{s'+\dim X'-\dim X-s}
\,i_X^!cc^Z_{X/Y/S}(\mathcal F).
\end{equation}
Since
$
\dim X'-\dim X=s'-s,
$
we have \eqref{eq:pbGNA}.
\end{proof}

 In particular, if $i_X$ is flat of pure relative dimension
$s'-s$, then
\begin{equation}\label{eq:auto-055}
{\rm pr}^{-1}SS(\mathcal F)\cap\ker(di_X)\subset0(X'),
\end{equation}
so $i_X$ is $SS(\mathcal F)$-transversal; moreover, for every
irreducible component $C\subset SS(\mathcal F)$,
\begin{equation}\label{eq:auto-056}
\dim(C\times_XX')
=
\dim C+s'-s
=
\dim X+s'-s
=
\dim X',
\end{equation}
hence $i_X$ is properly $SS(\mathcal F)$-transversal and the pullback formula applies.

\begin{proposition}\label{prop:pfGNA}
Let $p:X\to X'$ be a proper morphism in ${\rm Sm}_k$, with $X'$ of
pure dimension, and let
$f':X'\to Y$ satisfy $f=f'\circ p$. Let $Z'\subseteq X'$ be closed,
with $Z\subseteq p^{-1}(Z')$. Assume that
$ \dim p_\circ SS(\mathcal F)\leq\dim X'$,
then $g\circ f'$ is $SS(Rp_*\mathcal F)$-transversal, and $f'$ is
$SS(Rp_*\mathcal F)$-transversal outside $Z'$. Moreover,
\begin{equation}\label{eq:pfgNA}
 p_*cc^Z_{X/Y/S}(\mathcal F)
 =cc^{Z'}_{X'/Y/S}(Rp_*\mathcal F)
 \quad\text{in }{\rm CH}_s(Z').
\end{equation}
\end{proposition}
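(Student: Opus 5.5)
We first settle the transversality claims and then reduce the pushforward identity to Saito's pushforward formula for characteristic cycles, as in Proposition~\ref{prop:pfcharclass}.

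\emph{Transversality.} By \cite[Lemma~2.2(ii)]{Bei16}, $SS(Rp_*\mathcal F)\subseteq p_\circ SS(\mathcal F)$. Consider a point $(x',\omega)$ with $\omega\in T^*_{g f'(x')}S$ and $d(g\circ f')(\omega)\in p_\circ SS(\mathcal F)$. Then there is $x\in p^{-1}(x')$ with $dp(d(gf')\omega)=dh(\omega)\in SS(\mathcal F)$. Since $h$ is $SS(\mathcal F)$-transversal, $\omega=0$, so $g\circ f'$ is $SS(Rp_*\mathcal F)$-transversal. Next let $x'\notin Z'$ and $\omega\in T^*_{f'(x')}Y$ with $df'(\omega)\in p_\circ SS(\mathcal F)$. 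Then some $x\in p^{-1}(x')$ satisfies $df(\omega)\in SS(\mathcal F)$. Because $Z\subseteq p^{-1}(Z')$, we have $x\notin Z$, and transversality of $f$ outside $Z$ gives $\omega=0$. Thus $f'$ is transversal outside $Z'$, and $cc^{Z'}_{X'/Y/S}(Rp_*\mathcal F)$ is defined.

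\emph{Pushforward identity.} Consider the diagram
\[
T^*X\xleftarrow{dp}T^*X'\times_{X'}X\xrightarrow{\rm pr}T^*X',
\]
and the induced
\[
\mathbb P(T^*Y\times_YX)=\mathbb P(T^*Y\times_YX')\times_{X'}X\xrightarrow{p_{\mathbb P}}\mathbb P(T^*Y\times_YX').
\]
The latter is proper and Cartesian over $p$. The tautological line bundle and the quotient $q^*f^*T^*(Y/S)/\mathcal O(-1)$ on the source are pullbacks of the corresponding bundles on the target.

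\emph{Step 1: factor the section.} The section $\overline{df}$ factors as the base change $\overline{df'}\times_{X'}X$, a section of $q^*(T^*X'\times_{X'}X)\otimes\mathcal O(1)$, followed by the bundle map $dp\otimes\mathcal O(1)$. As in Proposition~\ref{prop:pfcharclass}, refined Gysin maps compose, so
\[
\overline{df}^{\,!}CC(\mathcal F)_{\mathcal O(1)}=\bigl(\overline{df'}\times_{X'}X\bigr)^!\bigl((dp)^!CC(\mathcal F)\bigr)_{\mathcal O(1)}.
\]

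\emph{Step 2: push forward.} Apply $p_{\mathbb P*}$ and use that refined Gysin pullback commutes with proper pushforward \cite[Theorem~6.2]{Ful98}. This gives
\[
p_{\mathbb P*}\,\overline{df}^{\,!}CC(\mathcal F)_{\mathcal O(1)}=\overline{df'}^{\,!}\bigl({\rm pr}_*(dp)^!CC(\mathcal F)\bigr)_{\mathcal O(1)}.
\]
Twisting by $\mathcal O(1)$ is local, so it commutes with these operations.

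\emph{Step 3: identify with $CC(Rp_*\mathcal F)$.} Under the hypothesis $\dim p_\circ SS(\mathcal F)\le\dim X'$, Saito's pushforward formula \cite[Theorem~2.2.5]{Sai21} gives ${\rm pr}_*(dp)^!CC(\mathcal F)=CC(Rp_*\mathcal F)$. This is used exactly as in step (a) of Proposition~\ref{prop:pfcharclass}, and any sign convention there is absorbed in the same way.

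\emph{Step 4: conclude.} Cap both sides with $c_{r-1}$ of the quotient bundle and apply the projection formula. Then push forward along $q_{Z'*}$, using that $q_{Z'}\circ p_{\mathbb P}$ equals $p\circ q_Z$ on the supports. This yields $p_*cc^Z_{X/Y/S}(\mathcal F)=cc^{Z'}_{X'/Y/S}(Rp_*\mathcal F)$ in ${\rm CH}_s(Z')$; the factor $(-1)^s$ is the same on both sides.

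\emph{Main obstacle.} We expect two points to need care.

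First, Saito's pushforward formula in \cite{Sai21} may require projectivity or quasi-projectivity hypotheses, as in Proposition~\ref{prop:pfcharclass}. We would first try to reduce to that case. Both sides are compatible with restriction to opens of $X'$, so it suffices to work locally on $X'$. Chow's lemma combined with d\'evissage via Proposition~\ref{prop:geoNA-elementary}(3) would then handle a general proper $p$. Alternatively, we would simply impose the hypotheses under which \cite[Theorem~2.2.5]{Sai21} holds.

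Second, the supports must be controlled. The pushforward of the refined class has to be taken on the support $\mathbb P((df')^{-1}SS(Rp_*\mathcal F))$, and the image of $\mathbb P((df)^{-1}SS(\mathcal F))$ must lie over $Z'$. The latter follows from $Z\subseteq p^{-1}(Z')$ together with the transversality argument above. The supported form of Fulton's compatibility then applies.
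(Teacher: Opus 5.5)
Your transversality argument and Steps 1, 2 and 4 coincide with the paper's proof. These are the factorization $\overline{df}=(dp\otimes1)\circ p_{\mathbb P}^*\overline{df'}$, the commutation of refined Gysin maps with proper pushforward \cite[Theorem~6.2]{Ful98}, and the projection formula for the pulled-back quotient bundle.

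\textbf{The gap is Step~3.} \cite[Theorem~2.2.5]{Sai21} is only available under projectivity hypotheses; this is why Proposition~\ref{prop:pfcharclass} assumes the target projective and the morphism quasi-projective. Here $p$ is only proper and $X'$ is arbitrary. Your proposed reduction does not supply the missing hypotheses:
\begin{itemize}
\item Working locally on $X'$ is legitimate for the cycle identity on $T^*X'$, but it never makes the target projective. It is also not legitimate for the final identity in ${\rm CH}_s(Z')$, since Chow classes that agree on an open cover need not agree.
\item Chow's lemma produces $\widetilde X\to X$ with $\widetilde X$ possibly singular, so $CC$ is not even defined on $\widetilde X$.
\item Proposition~\ref{prop:geoNA-elementary}(3) is additivity for triangles on a fixed $X$, so it gives no way to pass between $X$ and $\widetilde X$.
\end{itemize}
Simply imposing Saito's hypotheses proves only a weaker statement than the proposition.

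\textbf{The missing ingredient} is the one the paper uses: Abe's proper push-forward theorem \cite{Abe21}. It is valid for an arbitrary proper $p$, but only in ${\rm CH}_{\dim X'}(p_\circ SS(\mathcal F))[1/\mathrm{char}\,k]$. The hypothesis $\dim p_\circ SS(\mathcal F)\le\dim X'$ is exactly what removes the denominator. In top dimension there are no rational equivalences, so
\[
{\rm CH}_{\dim X'}(p_\circ SS(\mathcal F))=Z_{\dim X'}(p_\circ SS(\mathcal F))
\]
is free abelian, and inverting $\mathrm{char}\,k$ is injective on it. Hence $CC(Rp_*\mathcal F)={\rm pr}_*(dp)^!CC(\mathcal F)$ holds as an equality of integral cycles.

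The same reasoning applies on the twist $D$, which has dimension at most $\dim X'+\dim Y-1$. It shows that the $\mathcal O(1)$-twisted identity obtained on trivializing opens is an equality of cycles, and therefore glues. With this replacement, your Steps~2 and~4 go through unchanged.

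Note the different role of the dimension hypothesis in the two arguments. In your version it only serves to quote Saito's theorem. In the paper it is what makes Abe's $\mathbb Z[1/\mathrm{char}\,k]$-statement integral.
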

\begin{proof}
Proper pushforward of singular support gives
$SS(Rp_*\mathcal F)\subseteq p_\circ SS(\mathcal F)$
\cite[Lemma~2.2(ii)]{Bei16}. A covector in $p_\circ SS(\mathcal F)$
lifts to a covector whose pullback by $dp$ belongs to $SS(\mathcal F)$.
The hypotheses on $h$ and $f$ therefore give
\begin{equation}\label{eq:auto-057}
\begin{split}
 (df')^{-1}p_\circ SS(\mathcal F)
 \subseteq 0(X')\cup(T^*Y\times_YZ'),\\
 (df')^{-1}p_\circ SS(\mathcal F)
 \cap(T^*S\times_SX')\subseteq 0(X').
\end{split}
\end{equation}
Here $0(X')$ is the zero section of $T^*Y\times_YX'$.
This proves both transversality assertions, and also gives a common
projective support on which the quotient bundle is defined.

Abe's proper-pushforward theorem
\cite[Introduction, Theorem; \S6.6]{Abe21} gives
\begin{equation}\label{eq:pfGNAc1}
 CC(Rp_*\mathcal F)
 =p_!CC(\mathcal F)
 ={\rm pr}_{2*}(dp)^!CC(\mathcal F)\qquad
 \in\operatorname{CH}_{\dim X'}(p_{\circ}SS(\mathcal F))\left[\frac{1}{\operatorname{char}k}\right]
\end{equation}
where $\operatorname{char}k>0$. 
Let $q'$ be the projective projection over $X'$, and let
\begin{equation}\label{eq:auto-058}
p_{\mathbb{P}}:\mathbb{P}(T^*Y\times_YX)
 \longrightarrow\mathbb{P}(T^*Y\times_YX')
\end{equation}
be the base change of $p$. We have natural maps
\begin{equation}\label{eq:auto-059}
\begin{split}
 dp\otimes1:
 q^*(T^*X'\times_{X'}X)\otimes\mathcal O(1)
 &\longrightarrow q^*T^*X\otimes\mathcal O(1),\\
 \rho:
 q^*(T^*X'\times_{X'}X)\otimes\mathcal O(1)
 &\longrightarrow q'^*T^*X'\otimes\mathcal O(1).
\end{split}
\end{equation}
The second map is proper. 
Let
\begin{equation}\label{eq:auto-060}
D\subset q'^*T^*X'\otimes\mathcal O(1)
\end{equation}
be the $\mathcal O(1)$-twist of the pullback of $p_\circ SS(\mathcal F)$, then
$
\dim D\leq \dim X'+\dim Y-1
$.
On an open subset trivializing $\mathcal{O}(-1)$ we have
\begin{equation}\label{eq:pbcal}
  \rho_*(dp\otimes1)^!CC(\mathcal F)_{\mathcal O(1)}
=
CC(Rp_*\mathcal F)_{\mathcal O(1)}
\qquad\text{in }
\operatorname{CH}_{\dim X'+\dim Y-1}(D)\left[\frac{1}{\operatorname{char}k}\right].  
\end{equation}
Since
$
\operatorname{CH}_{\dim X'+\dim Y-1}(D)
=
Z_{\dim X'+\dim Y-1}(D)
$
and inverting $\operatorname{char}k$ is injective on the free abelian group 
$Z_{\dim X'+\dim Y-1}(D)$,
the local equalities \eqref{eq:pbcal} are equalities of cycles and hence glue globally
into an equality in $Z_{\dim X'+\dim Y-1}(D)$.
Let $p_{\mathbb{P}}$ be the base change of $p$. The pullback
of $\overline{df'}$ is a section
\begin{equation}\label{eq:auto-061}
p_{\mathbb{P}}^*\overline{df}:
\mathbb{P}(T^*Y\times_YX)
\longrightarrow
q^*(T^*X'\times_{X'}X)\otimes\mathcal O(1),
\end{equation}
and
\begin{equation}\label{eq:auto-062}
(dp\otimes1)\circ p_{\mathbb{P}}^*\overline{df'}=\overline{df},
\qquad
\rho\circ p_{\mathbb{P}}^*\overline{df'}=\overline{df'}\circ p_{\mathbb{P}}.
\end{equation}
Therefore, by compatibility of refined Gysin pullback with proper pushforward \cite[Theorem~6.2]{Ful98},
\begin{equation}\label{eq:auto-063}
\begin{aligned}
(p_{\mathbb{P}})_*\overline{df}^!CC(\mathcal F)_{\mathcal O(1)}
&=
(p_{\mathbb{P}})_*
(p_{\mathbb{P}}^*\overline{df'})^!
(dp\otimes1)^!CC(\mathcal F)_{\mathcal O(1)}
\\
&=
\overline{df'}^!\,
\rho_*(dp\otimes1)^!CC(\mathcal F)_{\mathcal O(1)}
\\
&=
\overline{df'}^!CC(Rp_*\mathcal F)_{\mathcal O(1)}.
\end{aligned}
\end{equation}
Combining with
\begin{equation}\label{eq:auto-064}
p_{\mathbb{P}}^*
\left(
\frac{q'^*f'^*T^*(Y/S)}{\mathcal O(-1)}
\right)
=
\frac{q^*f^*T^*(Y/S)}{\mathcal O(-1)},
\end{equation}
we have
\begin{equation}\label{eq:auto-065}
\begin{aligned}
p_*cc^Z_{X/Y/S}(\mathcal F)
&=
(-1)^s q'_*
\left[
c_{r-1}\left(
\frac{q'^*f'^*T^*(Y/S)}{\mathcal O(-1)}
\right)
\cap
\overline{df'}^!CC(Rp_*\mathcal F)_{\mathcal O(1)}
\right]
\\
&=
cc^{Z'}_{X'/Y/S}(Rp_*\mathcal F),
\end{aligned}
\end{equation}
which proves the assertion.
\end{proof}

\begin{corollary}[Conductor formula]\label{cor:projective-conductor}
Let $S={\rm Spec}(k)$, let $Y$ be a smooth connected curve, and let
$f:X\to Y$ be proper. Let $T\subset Y$ be a finite set of closed points,
and put $Z=f^{-1}(T)$. Assume that $f$ is $SS(\mathcal F)$-transversal
outside $Z$. Then
\begin{equation}\label{eq:SCF}
 f_*cc^Z_{X/Y/k}(\mathcal F)
 =-\sum_{y\in T}a_y(Rf_*\mathcal F)[y]
 \quad\text{in }{\rm CH}_0(T),
\end{equation}
where $a_y$ is the Artin conductor, computed after derived reduction of
the coefficient ring to its residue field.
\end{corollary}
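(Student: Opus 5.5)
The plan is to reduce the conductor formula to the isolated Milnor formula (Theorem~\ref{thm:SMF}) on the curve $Y$. We use the proper pushforward compatibility (Proposition~\ref{prop:pfGNA}) applied to $p=f:X\to X'=Y$, with $f'=\mathrm{id}_Y:Y\to Y$, $g:Y\to\Spec k$, and $Z'=T$. By construction $Z=f^{-1}(T)\subseteq p^{-1}(Z')$.

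The dimension hypothesis $\dim f_\circ SS(\mathcal F)\le \dim Y=1$ must be verified. Over $Y\setminus T$, $f$ is $SS(\mathcal F)$-transversal, so $(df)^{-1}SS(\mathcal F)$ lies in the zero section there. Hence $f_\circ SS(\mathcal F)$ over $Y\setminus T$ is contained in the zero section $T^*_YY$, which has dimension $1$. Over the finitely many points $y\in T$, $f_\circ SS(\mathcal F)$ is a conical closed subset of the line $T^*_yY$, so it has dimension at most $1$. Thus the hypothesis holds. The transversality of $h=f$ to $\Spec k$ is automatic.

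Proposition~\ref{prop:pfGNA} then shows that $\mathrm{id}_Y$ is $SS(Rf_*\mathcal F)$-transversal outside $T$, and gives
\begin{equation*}
f_*cc^Z_{X/Y/k}(\mathcal F)=cc^{T}_{Y/Y/k}(Rf_*\mathcal F)\quad\text{in }{\rm CH}_0(T).
\end{equation*}
Now $T$ is a finite set of closed points of $Y$, and $\mathrm{id}_Y$ is $SS(Rf_*\mathcal F)$-transversal outside $T$. Theorem~\ref{thm:SMF} (Saito's Milnor formula) applied to $\mathrm{id}:Y\to Y$ yields
\begin{equation*}
cc^{T}_{Y/Y/k}(Rf_*\mathcal F)=-\sum_{y\in T}\operatorname{dimtot}R\Phi_{\bar y}(Rf_*\mathcal F,\mathrm{id})\,[y].
\end{equation*}
Finally, $\operatorname{dimtot}R\Phi_{\bar y}(\mathcal G,\mathrm{id})=a_y(\mathcal G)$, as recalled in \eqref{eq:CAC}. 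This identity holds because $R\Phi_{\bar y}(\mathcal G,\mathrm{id})$ is the cone of $\mathcal G_{\bar y}\to\mathcal G_{\bar\eta}$, so its total dimension is ${\rm rank}\,\mathcal G_{\bar\eta}-{\rm rank}\,\mathcal G_{\bar y}+{\rm Sw}_y\mathcal G$. It is computed after derived reduction to the residue field, as stipulated. Combining the three displays gives \eqref{eq:SCF}.

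The main obstacle is checking that Proposition~\ref{prop:pfGNA} genuinely applies. That proposition relies on Abe's pushforward theorem for characteristic cycles, so I would double-check its hypotheses: properness of $f$ (given) and the dimension bound (verified above). I would also check that the $\mathcal O(1)$-twist construction for the pushforward is valid with $r=1$, $s=0$; in that case the Chern class factor is $c_0=1$, just as in Proposition~\ref{prop:geoNA-affine-line}.
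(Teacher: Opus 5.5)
Your proposal is correct and follows the paper's proof, which simply combines Proposition~\ref{prop:pfGNA} (with $p=f$, $X'=Y$, $f'=\mathrm{id}_Y$, $Z'=T$) and Theorem~\ref{thm:SMF} applied to $\mathrm{id}_Y$. Your check of the bound $\dim f_\circ SS(\mathcal F)\le 1$ and your identification of $\operatorname{dimtot}R\Phi_{\bar y}(\mathcal G,\mathrm{id})$ with $a_y(\mathcal G)$, cf.\ \eqref{eq:CAC}, fill in the details the paper leaves implicit.
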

\begin{proof}
Combining Theorem~\ref{thm:SMF} and Proposition~\ref{prop:pfGNA} we get the desired result.
\end{proof}


\section{Non-isolated Milnor formula in quasi-affine case}
\subsection{}\label{qafmil}
Let $k$ be an algebraically closed field of characteristic $p > 0$, $\Lambda$ be a finite local ring such that the characteristic of the residue field is invertible in $k$. Let $X$ be a smooth separated scheme of finite type over $k$ purely of dimension $d$, let $f : X \rightarrow \A^1_k$ be a morphism and let $\mathcal F\in D_{\rm ctf}(X,\Lambda)$.
Let $t$ be the standard coordinate on $\A^1_k$. The form $dt$ defines 
\begin{equation}\label{eq:display-1036}
df: X\longrightarrow X\times_{\A^1_k}T^*\A^1_k\longrightarrow T^{\ast}X
\end{equation}
which gives a section of $T^{\ast}X$. Put $Z=Z_{X,f}\coloneq (df)^{-1}SS(\mathcal{F})$, \cite[Lemma~4.2(5)]{Sai17a} implies $f$ is universally locally acyclic relatively to $\mathcal{F}$ in $X\setminus Z $.

\begin{theorem}\label{thmqafmil}
    Consider the previous assumptions. Assume that $X$ is quasi-affine. Then we have
    \begin{equation}\label{eq:qafmil}
\widetilde{C}^Z_{X/\A^1_k/k}(\mathcal{F})=\widetilde{{\rm cl}}\left( {df}^! CC(\mathcal F)\right).
\end{equation}
\end{theorem}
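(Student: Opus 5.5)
The strategy is a deformation argument: perturb $f$ to a generic member of a one-parameter family with only isolated characteristic points, prove the identity there from the two isolated Milnor formulas, and specialize back to $f$. Since $X$ is quasi-affine, it embeds as an open subscheme of an affine scheme, so we have enough global functions. The plan is to choose $g\in\Gamma(X,\mathcal O_X)$ such that for a generic value of a parameter $u$, the function $f_u=f+ug$ has only isolated characteristic points with respect to $SS(\mathcal F)$. Concretely, consider the family
\begin{equation*}
F:\mathcal X:=X\times\A^1_k\longrightarrow\A^1_k\times\A^1_k,\qquad (x,u)\mapsto (f(x)+ug(x),u),
\end{equation*}
regarded over the base $\A^1_k=\Spec k[u]$. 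Let $\mathcal Z=(dF_{/u})^{-1}SS(\mathcal F\boxtimes\Lambda)$ be the relative characteristic locus. Then $\mathcal Z_0=Z$, and the first step is to show that the generic fibre $\mathcal Z_\eta$ is finite over $\eta$. Here we take $g$ to be a general linear combination of coordinate functions coming from a quasi-affine immersion, and run a Bertini/dimension count on the conormal-type incidence variety: the locus of $(x,\lambda)$ with $df_x+\lambda\,dg_x\in SS(\mathcal F)$ should have dimension $\le 1$ and dominate the $u$-line with finite fibres.

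Next, pass to a geometric point $\bar\eta$ over the perfect closure of the generic point of $\A^1_u$. On $X_{\bar\eta}$, the function $f_{\bar\eta}$ has isolated characteristic points $\mathcal Z_{\bar\eta}$. Saito's isolated Milnor formula (Theorem \ref{thm:SMF}) together with Proposition \ref{prop:geoNA-affine-line} gives
\[
\widetilde{\rm cl}\bigl(df_{\bar\eta}^!CC(\mathcal F_{\bar\eta})\bigr)=-\sum_x\operatorname{dimtot}R\Phi_{\bar x}[x].
\]
The cohomological Milnor formula \eqref{eq:CMF} identifies the same sum with $\widetilde C^{\mathcal Z_{\bar\eta}}$, point by point. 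Both sides are compatible with the purely inseparable and finite extension: for the Chow side use proper pushforward and flat pullback, and for the cohomological side use \eqref{eq:pbCNA}. This descends the equality to $\eta$, and then to a henselization of $\A^1_u$ at $0$, with $X\times$ the trait as the total space. We also need $CC(\mathcal F\boxtimes\Lambda)$ to restrict correctly to fibres; this follows from Saito's pullback formula \cite[Theorem 7.6]{Sai17a}, applied to the smooth projection and to the fibre inclusions, which are properly transversal.

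Finally, specialize to $u=0$. On the cohomological side, \eqref{eq:na-specialization} gives $\spet(\widetilde C^{\mathcal Z_\eta})=\widetilde C^{Z}$. On the geometric side, Fulton's specialization of refined Gysin maps \cite[\S10.1, \S20.3]{Ful98} gives $\operatorname{sp}(df_\eta^!CC)=df_0^!CC$ in ${\rm CH}_0(Z)$, because both arise as the fibres of the single refined class $(dF_{/u})^!CC$ on $\mathcal Z$. What remains is that the supported cycle class map commutes with specialization, i.e.\ $\spet\circ\widetilde{\rm cl}=\widetilde{\rm cl}\circ\spcy$ for cycles supported on $\mathcal Z$, which is finite but not flat over the trait. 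I expect this compatibility (the announced Theorem \ref{spcycommthm}) to be the main obstacle. I would first prove it in $H^0_{\mathcal Z}(\mathcal X,\mathcal K)$ by taking the closure $\overline{W}$ of a cycle $W$ on the generic fibre. The class of $\overline W$ restricts to both fibres: to $\eta$ by restriction, and to $s$ via $i^!$, which matches $\operatorname{sp}(W)=i^![\overline W]$ by Fulton's definition. This uses compatibility of the cycle class with refined Gysin pullback along the Cartier divisor $\{u=0\}$ and with restriction to opens. The genericity lemma for $g$ is the second delicate point; if the dimension count fails for linear $g$, I would allow $g$ of higher degree.
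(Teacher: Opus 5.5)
Your outline follows the paper's route almost step for step. You perturb $f$ by $ug$, with $g$ general in a finite-dimensional space of global functions whose differentials generate $\Omega^1_X$. You then get a finite characteristic locus at the generic point of the $u$-line, combine Saito's and the cohomological Milnor formulas there, descend, and specialize both sides, proving $\spet\circ{\rm cl}={\rm cl}\circ\spcy$ through flat closures of generic cycles.

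\textbf{The gap: $CC$ on the generic fibre.} Saito's formula over $\bar\eta$ computes $df_{\bar\eta}^!CC(\mathcal F_{\bar\eta})$. What you specialize, however, is the generic fibre of $(dF_{/u})^!(CC(\mathcal F)\times\A^1_k)$. To connect them you need $CC(\mathcal F_{\bar\eta})=CC(\mathcal F)\times_k\bar\eta$. You justify this by \cite[Theorem~7.6]{Sai17a} ``applied to the fibre inclusions'', but that theorem concerns morphisms of smooth schemes of finite type over one fixed perfect field. It covers the fibre $u=0$, not $X_{\bar\eta}\to X\times\A^1_k$. Moreover, $CC(\mathcal F_{\bar\eta})$ is characterized by the Milnor formula for test pairs ($U\to X_{\bar\eta}$ \'etale, $U\to\A^1_{\bar\eta}$) defined over the large field. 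These in general do not come from $k$, so a pullback statement over $k$ does not reach them.

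The paper proves exactly this as Lemma~\ref{perfdesc}, over $K^{\operatorname{perf}}$:
it spreads a test pair out over a smooth finite-type $k$-model $M$;
it shows that the local intersection number and $-\operatorname{dimtot}R\Phi$ are constructible and flat functions on the quasi-finite characteristic locus over $M$ \cite[Lemma~2.3, Proposition~2.16]{Sai17a};
it checks their equality at closed points of $M$, where the fibres are \'etale over $X$ and Saito's theorem over $k$ applies;
it propagates the equality to generic points by \cite[Lemma~2.2(3)]{Sai17a};
it concludes by uniqueness of $CC$.

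The same passage is needed for a hypothesis of \eqref{eq:na-specialization} that you never check: universal local acyclicity of $F_\eta$ relative to $\mathcal F_\eta$ outside $\mathcal Z_\eta$. The paper gets it from the base change of $SS$ \cite[Theorem~1.4(iii)]{Bei16}, followed by descent along $\eta^{\operatorname{perf}}\to\eta$ via Lemma~\ref{lem:Trans=ULA}.

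\textbf{Smaller points.}
The descent is not ``proper pushforward along a finite purely inseparable extension'': $K^{\operatorname{perf}}/K$ is infinite and $\bar K/K$ is not purely inseparable. The paper stops at $K^{\operatorname{perf}}$ because $\eta^{\operatorname{perf}}\to\eta$ is a universal homeomorphism, so the relevant supported groups are isomorphic (Lemma~\ref{nacohgrpdesc}). Going to $\bar\eta$ would require a separate injectivity argument for pullback on $H^0_{\mathcal Z_\eta}$; this is available by purity, since $\mathcal Z_\eta$ is finite.

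The scheme $\mathcal Z$ is not finite over the trait: its special fibre is $Z$, and only $\mathcal Z_\eta$ is finite. This is why Theorem~\ref{spcycommthm} is proved for arbitrary closed supports. It uses the SGA~$4\frac12$ relative class of the flat closure $\overline W$ together with Lemma~\ref{etextsp}, i.e.\ $\spet(j_X^*\gamma)=i_X^*\gamma$. That lemma is the precise content of your ``the class of $\overline W$ restricts to both fibres''.

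Finally, linear $g$ does suffice. Do the dimension count on the whole family $X\times\VV\to T^*X$, $(x,g)\mapsto df_x+dg_x$, which is smooth. Then restrict to a line through $0$ meeting the good open $\VV^\circ$.
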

By Proposition \ref{prop:geoNA-affine-line}, $cc^Z_{X/\A^1_k/k}(\mathcal F)=df^!CC(\mathcal F)$ in $\mathrm{CH}_0(Z)$. Hence Theorem \ref{thmqafmil} verifies Conjecture \ref{conj:MFforNI} in the quasi-affine case, with $Y=\A^1_k$, $S=\Spec k$.

We devote the rest of this section to the proof of the theorem.

\subsection{}
\label{spgysinpull}
We first recall Fulton's construction of specialization over a discrete valuation ring \cite[Chapter~20]{Ful98}. 
Let $R$ be a discrete valuation ring, let $S=\Spec R$ with $j:\eta\hookrightarrow S$, $i:s\hookrightarrow S$ be its generic point and closed point, respectively. Let $u$ be a uniformizer of $R$.

Define the dimension function by setting $\delta_S(s)=0$, $\delta_S(\eta)=1$ and
\begin{equation}\label{eq:display-1058}
 \delta_T(x)=\delta_S(p(x))
        +\operatorname{trdeg}_{\kappa(p(x))}\kappa(x)
\end{equation}
for a morphism $p:T\to S$  locally of finite type with $x\in T$.
For an integral closed subscheme $V\subset T$, its $S$-dimension is defined to be $\delta_T(\xi_V)$ where $\xi_V$ is the generic point of $V$.  
Let $Z_m(T/S)$ be the free abelian group on integral closed
subschemes of $S$-dimension $m$, we may define rational equivalence in the usual way:
it is generated by $\operatorname{div}_V(h)$ for rational functions $h$ on
integral closed $V$ of $S$-dimension $m+1$.  Put
${\rm CH}_m(T/S)=Z_m(T/S)/\!\sim_{\mathrm{rat}}$.  
Let $j_T: T_\eta \rightarrow T$, $i_T: T_s \rightarrow T$ be the base changes of $j$ and $i$, respectively. Then, with a slight abuse of notation, we have
\begin{equation}\label{eq:relative-degrees}
 j_T^*: {\rm CH}_{m+1}(T/S)\longrightarrow{\rm CH}_m(T_\eta),
 \qquad
 i_T^!: {\rm CH}_{m+1}(T/S)\longrightarrow{\rm CH}_m(T_s).
\end{equation}
The second map is the refined Gysin map obtained by the regular immersion $i$. 
Moreover, we have the localization exact sequence
\begin{equation}\label{eq:rellocseq}
 {\rm CH}_{r+1}(T_s)\xrightarrow{{i_T}_*}{\rm CH}_{r+1}(T/S)
 \xrightarrow{j_T^*}{\rm CH}_r(T_\eta)\longrightarrow0.
\end{equation}
For any $\alpha\in{\rm CH}_r(T_\eta)$, choose a lift $\widetilde \alpha \in {\rm CH}_{r+1}(T/S)$ by
\eqref{eq:rellocseq} and define
\begin{equation}\label{eq:spch-lift}
 \spcy(\alpha)=i_T^!(\widetilde \alpha)\in{\rm CH}_r(T_s).
\end{equation}
One may check $\spcy(\alpha)$ is independent of the lift.  Indeed, two lifts differ by some
${i_T}_*\xi$, and the refined self-intersection formula gives
\begin{equation}\label{eq:display-1091}
 i_T^!{i_T}_*\xi
 =c_1\bigl(p_s^*N_{s/S}\bigr)\cap\xi=0,
\end{equation}
because $N_{s/S}\simeq\mathcal O_s$.  In particular,
\begin{equation}\label{eq:spch-relative-lift}
 \spcy(j_T^*\gamma)=i_T^!\gamma
\end{equation}
for $\gamma\in{\rm CH}_{r+1}(T/S)$.
For integral subscheme $V\subset T$ such that $V_{\eta}\neq \varnothing$ (in this case we call $V$ \emph{horizontal}), $\spcy([V_{\eta}])$ is the divisor $\operatorname{div}_V(u)$, with the usual length multiplicities along the special fiber. 

\subsection{}
\label{secgyssp}
Let $B$ be a smooth scheme separated of finite type over $S$.  Let $E\to B$ be a vector
bundle of rank $r$, let $\sigma:B\to E$ be a section, let $C$ be a
closed subscheme of $E$, and put $Z=B\times_{E}C$.
Since $\sigma$ is a regular immersion of codimension $r$,
there is a supported refined Gysin map
\begin{equation}\label{eq:display-1108}
 \sigma^!: {\rm CH}_{m+1}(C/S)\longrightarrow{\rm CH}_{m-r+1}(Z/S).
\end{equation}

\begin{lemma}
\label{gyssplem}
Consider the assumptions in \S\ref{secgyssp}.
Let $\gamma\in{\rm CH}_{m+1}(C/S)$.  Define
\begin{equation}\label{eq:display-1115}
 \gamma_\eta=j_C^*\gamma\in{\rm CH}_m(C_\eta),
 \qquad
 \gamma_s=i_C^!\gamma\in{\rm CH}_m(C_s).
\end{equation}
Then we have
\begin{equation}\label{eq:gysspge}
\begin{aligned}
j_Z^*(\sigma^!\gamma)&=\sigma_\eta^!\gamma_\eta
       &&\text{in }{\rm CH}_{m-r}(Z_\eta),
\end{aligned}
\end{equation}
\begin{equation}\label{eq:gysspsp}
\begin{aligned}
\spcy\bigl(\sigma_\eta^!\gamma_\eta\bigr)
   &=\sigma_s^!\gamma_s
       &&\text{in }{\rm CH}_{m-r}(Z_s).
\end{aligned}
\end{equation}
\end{lemma}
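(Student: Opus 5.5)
The plan is to deduce both identities from standard compatibilities of Fulton's refined Gysin maps in the relative setting of \cite[Chapter~20]{Ful98}, namely with flat pullback (restriction to the open generic fibre) and with other refined Gysin maps (commutativity). Throughout, $\sigma^!$ on ${\rm CH}_*(C/S)$ is the refined Gysin map attached to the Cartesian square formed by $Z\to C$ over $\sigma:B\to E$. By Fulton's construction, it is the composition of specialization to the normal cone $C_{Z}C\subset N=\sigma^*E|_Z$ with the Gysin map of the zero section of the bundle $N$ (inverse of flat pullback).

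For \eqref{eq:gysspge}, I would use that $j_Z:Z_\eta\to Z$ and $j_C:C_\eta\to C$ are open immersions, since $\eta$ is open in $S$. Hence they are flat, and the square $Z_\eta\to C_\eta$ over $\sigma_\eta$ is the base change of the square for $\sigma$. Refined Gysin maps commute with flat pullback \cite[Theorem~6.2(b)]{Ful98}; in the relative version, restriction to an open subscheme sends $S$-dimension $m+1$ to dimension $m$ on the generic fibre, matching \eqref{eq:relative-degrees}. This gives $j_Z^*\sigma^!\gamma=\sigma_\eta^!j_C^*\gamma$. Concretely, the deformation to the normal cone and the zero-section Gysin map both commute with restriction to opens, so I would check this at the level of cycles.

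For \eqref{eq:gysspsp}, I first apply \eqref{eq:spch-relative-lift} with $T=Z$. By the first identity, $\sigma^!\gamma\in{\rm CH}_{m-r+1}(Z/S)$ is a lift of $\sigma_\eta^!\gamma_\eta$, so
\begin{equation*}
\spcy(\sigma_\eta^!\gamma_\eta)=i_Z^!\sigma^!\gamma .
\end{equation*}
It remains to show $i_Z^!\sigma^!\gamma=\sigma_s^!i_C^!\gamma$. This is the commutativity of refined Gysin maps \cite[Theorem~6.4]{Ful98}, applied to the two regular immersions $i:s\hookrightarrow S$ and $\sigma:B\hookrightarrow E$. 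The cycle lives on $C$, which maps to $S$ and to $E$, and the fibre square $Z_s=s\times_S Z=B\times_E C_s$ makes both composites land in ${\rm CH}_{m-r}(Z_s)$. Here I use that $B_s\to E_s$ is the base change of $\sigma$, so $\sigma_s^!$ on $C_s$ agrees with the refined Gysin map for $\sigma$ applied over $C_s$.

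The main obstacle is justifying the Chapter~6 results in Fulton's relative Chow groups over the trait $S$ rather than over a field. Fulton's Chapter~20 states that the intersection theory of Chapters~1--6 carries over to schemes over a regular base of dimension one with the dimension function $\delta$ of \eqref{eq:display-1058}, including deformation to the normal cone, flat pullback, and commutativity. I would cite that and check that degrees shift correctly: $i$ has codimension one, lowering $S$-dimension by one to ordinary dimension on $s$, and $\sigma$ has codimension $r$. If a direct citation is unavailable, the fallback is to reduce to integral $V\subset C$. For horizontal $V$, $i^!_C[V]=\operatorname{div}_V(u)$, and commutativity amounts to the excess-free statement that $\sigma^!$ commutes with taking the Cartier divisor of $u$ \cite[Proposition~6.3 and Theorem~6.4]{Ful98}. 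For vertical $V$ supported on $C_s$, both sides vanish by \eqref{eq:display-1091}, since $N_{s/S}$ is trivial.
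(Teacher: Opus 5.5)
Your proposal is correct and follows essentially the same route as the paper. The first identity comes from compatibility of refined Gysin maps with the flat base change $\eta\to S$. For the second, you view $\sigma^!\gamma$ as a lift, so that $\spcy(\sigma_\eta^!\gamma_\eta)=i_Z^!\sigma^!\gamma$, and then apply commutativity of refined Gysin maps \cite[\S6.4]{Ful98}. Your extra discussion of why Fulton's Chapter~6 results hold over the trait (via Chapter~20), and your fallback reduction to horizontal and vertical prime cycles, address a point the paper simply takes for granted.
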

\begin{proof}
	   Refined Gysin pullback exchanges with flat base change
$\eta\rightarrow S$, hence we have \begin{equation}\label{eq:auto-066}
j^*_Z\circ \sigma^!=\sigma^!_{\eta}\circ j^*_C,
\end{equation}
which proves \eqref{eq:gysspge}. 

For the special fiber, we have
\begin{equation}\label{eq:display-1134}
\spcy(\sigma_\eta^!\gamma_\eta)
        \overset{\eqref{eq:gysspge}}{=}\spcy(j_Z^*\sigma^!\gamma)
        \overset{\eqref{eq:spch-relative-lift}}{=}i_Z^!\sigma^!\gamma
        \overset{\text{\cite[\S6.4]{Ful98}}}{=}\sigma_s^!i_C^!\gamma.
\end{equation}

\end{proof}

\begin{corollary}
\label{flatgyssp}
Consider the assumptions in Lemma~\ref{gyssplem}.
Suppose
\begin{equation}\label{eq:display-1148}
 \alpha=\sum_a n_a[C_a]
\end{equation}
is a cycle supported on $C$, with components $C_a\subset E$ flat over $S$ of pure relative dimension $m$,
and put $q=m-r\geq0$.  For $t\in\{\eta,s\}$ define
\begin{equation}\label{eq:display-1153}
 \alpha_t:=\sum_{a:(C_a)_t\ne\varnothing}n_a[(C_a)_t],
\end{equation}
with the usual scheme-theoretic multiplicities at its generic points.
Then
\begin{equation}\label{eq:flat-gysin-sp}
 \spcy\bigl(\sigma_\eta^!\alpha_\eta\bigr)
 =\sigma_s^!\alpha_s\qquad\text{in }{\rm CH}_q(Z_s).
\end{equation}

\end{corollary}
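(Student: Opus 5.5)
The plan is to reduce Corollary~\ref{flatgyssp} to Lemma~\ref{gyssplem} by exhibiting a class $\gamma\in{\rm CH}_{m+1}(C/S)$ with $j_C^*\gamma=\alpha_\eta$ and $i_C^!\gamma=\alpha_s$. The natural candidate is the cycle $\gamma=\sum_a n_a[C_a]$ itself, viewed in $Z_{m+1}(C/S)$. Each $C_a$ is integral and flat over $S$ of pure relative dimension $m$, so it is horizontal and its generic point has $S$-dimension $1+m$. Thus $\gamma$ is a legitimate element of ${\rm CH}_{m+1}(C/S)$.

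For the generic fiber, $j_C^*$ is flat pullback along the open immersion $C_\eta\hookrightarrow C$. It sends $[C_a]$ to $[(C_a)_\eta]$, and $(C_a)_\eta$ is nonempty because $C_a$ is flat, hence dominant over $S$. Therefore $j_C^*\gamma=\alpha_\eta$.

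For the special fiber, we compute $i_C^![C_a]$. Since $C_a$ is flat over $S$, the uniformizer $u$ is a nonzerodivisor on $\mathcal O_{C_a}$. Hence the Cartier divisor $(C_a)_s=C_a\times_S s$ is effective, and the refined Gysin map of the regular immersion $i$ is given by intersecting with it. By the description recalled in \S\ref{spgysinpull}, namely $\spcy([V_\eta])=\operatorname{div}_V(u)$ together with \cite[\S2.3, Example~6.2.1]{Ful98}, we get
\[
i_C^![C_a]=[\operatorname{div}_{C_a}(u)]=[(C_a)_s].
\]
Here $[(C_a)_s]$ is the fundamental cycle of the scheme-theoretic fiber, with length multiplicities at its generic points, all of which have dimension $m$ by flatness. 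Summing over $a$ gives $i_C^!\gamma=\alpha_s$; terms with $(C_a)_s=\varnothing$ contribute zero, which matches the definition of $\alpha_s$.

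Applying \eqref{eq:gysspge} and \eqref{eq:gysspsp} of Lemma~\ref{gyssplem} to this $\gamma$ then yields
\[
\spcy(\sigma_\eta^!\alpha_\eta)=\sigma_s^!i_C^!\gamma=\sigma_s^!\alpha_s
\]
in ${\rm CH}_{q}(Z_s)$, which is the claim.

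The main obstacle is the identification $i_C^![C_a]=[(C_a)_s]$ with the correct multiplicities. The point is that flatness prevents any excess or embedded vertical component from contributing, so the Gysin pullback agrees with the naive cycle of the fiber. I would verify this by noting that the normal cone of $(C_a)_s$ in $C_a$ is the line bundle $N_{s/S}\times_s (C_a)_s$, which is the total space of $\mathcal O$. The intersection class is therefore the fundamental class of $(C_a)_s$, and no excess term appears.
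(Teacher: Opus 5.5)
Your proposal is correct and follows essentially the same route as the paper. Both take the natural lift $\sum_a n_a[C_a]\in{\rm CH}_{m+1}(C/S)$, check that $j_C^*$ of it is $\alpha_\eta$ and $i_C^!$ of it is $\alpha_s$, and then apply Lemma~\ref{gyssplem}; your Cartier-divisor argument that $i_C^![C_a]=[(C_a)_s]$ (using that $u$ is a nonzerodivisor on the flat integral $C_a$) fills in a step the paper only asserts.
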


\begin{proof}
Since each $C_a$ is flat over $S$ with pure $m$-dimension fibers,
we have a natural lift 
$\widetilde\alpha=\sum_a n_a[C_a]$ in ${\rm CH}_{m+1}(C/S)$.  Flat pullback gives
$j_C^*\widetilde\alpha=\alpha_\eta$, while Gysin pullback along $s\rightarrow S$
gives $i_C^!\widetilde\alpha=\alpha_s$.  The consequence follows from Lemma~\ref{gyssplem}.
\end{proof}

\subsection{}\label{spclcomm}
In sections \ref{spclcomm}--\ref{clspcomm}, let $S=\Spec R$ be a strictly henselian trait with generic point $\eta$ and closed point $s$, and let $u\in R$ be a uniformizer. Let $f:X\rightarrow S$ be a smooth separated morphism of finite type and pure relative dimension $d$. Let $k: Z\rightarrow X$ be a closed subscheme. Let $j_X: X_\eta \rightarrow X$, $i_X: X_s \rightarrow X$ be the generic and special fiber, respectively, and $b_X:X_{\overline \eta} \rightarrow X_\eta$ be the base change to the algebraic closure $\overline \eta$ . We use the convention of nearby cycle functor in \cite[\S3.4]{YZ22}:
\begin{equation}\label{eq:display-1174}
R\Psi_f=i^*_X {Rj_X}_* {b_X}_* {b_X}^* :D^+(X_{\eta},\Lambda)\rightarrow D^+(X_{s},\Lambda).
\end{equation}

For $0\leq r\leq d$, $c=d-r$ and $t\in\{\eta,s\}$,  we have the refined cycle class map
\begin{equation}\label{eq:refcyc}
 {\rm cl}_t:{\rm CH}_r(Z_t)
 \longrightarrow H^{2c}_{Z_t}(X_t,\Lambda(c))
 \simeq H^{-2r}_{Z_t}(X_t,\mathcal K_{X_t/t}(-r))
\end{equation}
by Riou \cite[Exposé XVI, Definition~2.3.1, Proposition~2.3.7,  Definition~2.4.1]{Rio14}.

Using the method of \cite[\S3.5]{YZ22}, we have a natural specialization map on \'etale cohomology groups
\begin{equation}\label{eq:etsp}
 \spet:
 H^{-2r}_{Z_\eta}(X_\eta,\mathcal K_{X_\eta/\eta}(-r))
 \longrightarrow
 H^{-2r}_{Z_s}(X_s,\mathcal K_{X_s/s}(-r)).
\end{equation}

\begin{theorem}
\label{spcycommthm}
For every $0\leq r\leq d$ and $c=d-r$, the square
\begin{equation}\label{eq:all-degree-square}
\begin{CD}
 {\rm CH}_r(Z_\eta)
 @>{{\rm cl}_\eta}>> H^{2c}_{Z_\eta}(X_\eta,\Lambda(c))\\
 @V{\spcy}VV @VV{\spet}V\\
 {\rm CH}_r(Z_s)
 @>{{\rm cl}_s}>> H^{2c}_{Z_s}(X_s,\Lambda(c))
\end{CD}
\end{equation}
commutes.
\end{theorem}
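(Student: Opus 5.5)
We reduce to cycles of integral subschemes and compute the specialization through a relative cycle class on the total space $X$. By \eqref{eq:rellocseq}, every $\alpha\in{\rm CH}_r(Z_\eta)$ is $j_Z^*$ of a class in ${\rm CH}_{r+1}(Z/S)$. By linearity it suffices to treat $\alpha=[V_\eta]$, where $V\subset Z$ is the closure of an integral subscheme of $Z_\eta$. Then $V$ is integral, horizontal and flat over $S$ of relative dimension $r$, and $\spcy([V_\eta])=i_Z^![V]=[V_s]$ is the special fiber with its length multiplicities, i.e. $\operatorname{div}_V(u)$.

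The plan is to introduce a relative cycle class ${\rm cl}_{X/S}([V])\in H^{2c}_V(X,\Lambda(c))$ on the total space. Since $X$ is regular (smooth over a trait) and $V$ has codimension $c$ in $X$, Riou's construction \cite[Exposé XVI]{Rio14}, which works for regular schemes, provides this class. Two compatibilities are then needed.

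\emph{Restriction to the generic fiber.} We need $j_X^*{\rm cl}_{X/S}([V])={\rm cl}_\eta([V_\eta])$. This is compatibility of Riou's class with the open immersion $j_X$, which is immediate since the class is étale local.

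\emph{Restriction to the special fiber.} We need $i_X^*{\rm cl}_{X/S}([V])={\rm cl}_s(i_Z^![V])={\rm cl}_s([V_s])$. This is compatibility of the cycle class with refined Gysin pullback along the regular immersion $i_X:X_s\to X$, a Cartier divisor. I would argue this by deformation to the normal cone, or more concretely as follows: $V_s=V\cap X_s$ is a proper intersection, so one can use Riou's compatibility of cycle classes with intersection products for regular schemes, ${\rm cl}(V)\cup{\rm cl}(X_s)={\rm cl}(V\cdot X_s)$, together with ${\rm cl}(X_s)=c_1(\mathcal O(X_s))$ and the fact that $\operatorname{div}(u)$ computes the multiplicities.

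Next I would check that the étale specialization $\spet$ of \eqref{eq:etsp}, defined via \cite[\S3.5]{YZ22}, is characterized by the following property. For a class $\beta\in H^{2c}_{Z}(X,\Lambda(c))$ one has
\begin{equation*}
\spet(j_X^*\beta)=i_X^*\beta .
\end{equation*}
The reason is that the specialization is the composition of the inverse of $H_{Z_s}(X_s,\cdot)\xleftarrow{\sim}H_Z(X,\cdot)$, which is an isomorphism by proper/henselian base change when $S$ is strictly henselian (used with supports via $i_X^*Rj_{X*}$ and the adjunction map), with restriction along $X_{\bar\eta}\to X_\eta$; here one uses the relative purity identification $\mathcal K_{X_t/t}\simeq\Lambda(d)[2d]$ for smooth $X/S$, compatible with base change. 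Concretely, $\spet$ is the map induced by $\mathrm{id}\to i_X^*Rj_{X*}j_X^*$ composed into $R\Psi_f$. With this property in hand, combining it with the two compatibilities gives
\begin{equation*}
\spet({\rm cl}_\eta[V_\eta])=i_X^*{\rm cl}_{X/S}[V]={\rm cl}_s[V_s]={\rm cl}_s(\spcy[V_\eta]).
\end{equation*}
Well-definedness is automatic, since both sides are homomorphisms on ${\rm CH}_r(Z_\eta)$.

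The main obstacle is the special-fiber compatibility, that is, the cycle class identity ${\rm cl}_s(i_Z^![V])=i_X^*{\rm cl}_{X/S}([V])$ over a mixed or equal-characteristic trait, including non-reduced fibers $V_s$. I would first try to reduce to the case $V$ regular at the generic points of $V_s$ by excision: cycle classes in top degree $H^{2c}_{V_s}$ are determined on a dense open of $V_s$, because purity kills the lower-dimensional locus. At such a generic point $x$ of $V_s$, the multiplicity is the length of $\mathcal O_{V_s,x}$, and the claim becomes the formula $\mathrm{cl}(\operatorname{div}(u|_V))$ for a Cartier divisor on the regular local scheme $V$, which follows from Riou's normalization ${\rm cl}=c_1$ for divisors.
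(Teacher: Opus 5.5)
\textbf{Verdict: there is a genuine gap, at exactly the step you call the main obstacle.}

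Your overall architecture is the paper's. You reduce to a prime cycle $[V_\eta]$ and take its closure $V$, which is integral and $S$-flat. You then want a class on the total space $X$, supported on $V$, whose restrictions to $X_\eta$ and $X_s$ are ${\rm cl}_\eta[V_\eta]$ and ${\rm cl}_s(\operatorname{div}_V(u))$. You conclude with $\spet(j_X^*\beta)=i_X^*\beta$, which is the paper's Lemma~\ref{etextsp}.

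The missing step is the identity $i_X^*{\rm cl}_X[V]={\rm cl}_s(\operatorname{div}_V(u))$ for Riou's absolute class. The concrete argument you offer for it does not work.
\begin{itemize}
\item Excision in top degree does let you shrink $X$ around a single generic point $\xi$ of $V_s$. It cannot make $V$ regular at $\xi$: $\mathcal O_{V,\xi}$ is a one-dimensional local domain, and it is a DVR only if $V$ is normal at $\xi$.
\item Counterexample: take $d=1$, $r=0$ and $V=\Spec R[x]/(x^2-u^3)\subset\mathbb A^1_S$. Then $V$ is integral and finite flat over $S$, and $V_s$ is a double point which is the only point of $V_s$. $V$ is singular there, so no shrinking helps.
\item Consequently your final appeal to ``a Cartier divisor on the regular local scheme $V$'' and ${\rm cl}=c_1$ is unavailable.
\end{itemize}
To finish along your lines you would need three things:
\begin{itemize}
\item pass to the normalization $\tilde V\to V$;
\item use a Gysin pushforward for the finite morphism $\tilde V\to X$ that is compatible with cycle classes and satisfies the projection formula;
\item use the degree formula $\operatorname{ord}_\xi(u|_V)=\sum_{\tilde\xi\mapsto\xi}[\kappa(\tilde\xi):\kappa(\xi)]\operatorname{ord}_{\tilde\xi}(u)$.
\end{itemize}
The cup-product alternative is only a pointer. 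It needs a precise theorem ${\rm cl}(V)\cup{\rm cl}(X_s)={\rm cl}(V\cdot X_s)$ with Tor multiplicities. It also needs the relation $i_{X*}i_X^*\alpha=\alpha\cup{\rm cl}(X_s)$ and injectivity of $i_{X*}$ on supported cohomology, which comes from purity for $X_s\subset X$. You neither state nor cite any of these precisely.

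The paper avoids all of this by not using Riou's absolute class on the total space. Instead it uses the relative class ${\rm cl}_S(W,\mathcal O_W)\in H^{2c}_W(X,\Lambda(c))$ of SGA~$4\frac12$ [Cycle, 2.3.3], attached to the perfect complex $\mathcal O_W$.
\begin{itemize}
\item $\mathcal O_W$ is perfect because $X$ is regular, and of finite Tor-dimension over $S$ because $W$ is $S$-flat.
\item The formation of this class commutes with arbitrary base change.
\item Flatness gives $Li_X^*\mathcal O_W=\mathcal O_{W_s}$, so [Cycle, Lemme~2.3.7, Th\'eor\`eme~2.3.8(i)] yields $i_X^*{\rm cl}_S(W,\mathcal O_W)=\sum_\xi m_\xi\,{\rm cl}_s[\overline{\{\xi\}}]$, with $m_\xi$ the length.
\end{itemize}
This holds whether or not $W$ is normal, so the length multiplicities appear with no intersection-theoretic compatibility to prove.

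A smaller point: you describe $\spet$ as inverting an isomorphism $H_Z(X,\cdot)\to H_{Z_s}(X_s,\cdot)$. That is false in general, since neither $X$ nor $Z$ is assumed proper over $S$. You only need restriction to $X_s$ followed by $\theta_f$. Then $\spet(j_X^*\beta)=i_X^*\beta$ follows from naturality of the nearby-cycle map and purity, as in Lemma~\ref{etextsp}.
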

We postpone the proof of Theorem~\ref{spcycommthm} to \S\ref{pf48}.
\begin{lemma}
\label{etextsp}
Let $W\subset Z$ be closed and
$ \gamma\in H^{2a}_W(X,\mathcal K_{X/S}(a))$,
we have
\begin{equation}\label{eq:extendable-class}
 \spet(j_X^*\gamma)=i_X^*\gamma
 \quad\mathrm{in}\quad H^{2a}_{W_{s}}(X_{s},\mathcal K_{X_{s}/{s}}(a)).
\end{equation}
\end{lemma}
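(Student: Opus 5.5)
To prove Lemma~\ref{etextsp}, I would first pin down how $\spet$ is built, following \cite[\S3.5]{YZ22}. For a class $\alpha$ on $X_\eta$ supported on $W_\eta$, one applies $i_X^*Rj_{X*}$ to the unit and to the morphism defining $\alpha$. Then one composes with the adjunction $\mathrm{id}\to b_{X*}b_X^*$ to land in $R\Psi_f$. Finally one uses the canonical base-change identification
\begin{equation*}
R\Psi_f(\mathcal K_{X_\eta/\eta})\simeq \mathcal K_{X_s/s},
\end{equation*}
valid because $f$ is smooth, so $\mathcal K_{X/S}\simeq\Lambda(d)[2d]$ and $R\Psi_f\Lambda=\Lambda$. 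Supports are handled by the analogous isomorphism $i_X^*Rj_{X*}R\Gamma_{W_\eta}\to R\Gamma_{W_s}R\Psi$, or equivalently by working with $\tau_*\tau^!$ along $W$.

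With this description, the claim reduces to the commutativity of one diagram. Write $\gamma$ as a map $\Lambda_X\to R\Gamma_W\mathcal K_{X/S}(a)[2a]$ on $X$. Applying $i_X^*$ gives $i_X^*\gamma$, after the base-change identifications
\begin{equation*}
i_X^*R\Gamma_W\mathcal K_{X/S}\to R\Gamma_{W_s}i_X^*\mathcal K_{X/S}\simeq R\Gamma_{W_s}\mathcal K_{X_s/s}.
\end{equation*}
The second isomorphism holds because $\mathcal K_{X/S}$ is a twisted constant sheaf. On the other side, $\spet(j_X^*\gamma)$ is obtained by passing through the unit $\mathrm{id}\to Rj_{X*}j_X^*$ and then applying $i_X^*$. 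By naturality of the unit $\mathcal G\to Rj_{X*}j_X^*\mathcal G$ applied to $\gamma$, the composite
\begin{equation*}
i_X^*\Lambda\to i_X^*Rj_{X*}\Lambda\to i_X^*Rj_{X*}j_X^*R\Gamma_W\mathcal K(a)[2a]
\end{equation*}
equals $i_X^*\gamma$ followed by $i_X^*$ of the unit. So everything reduces to one claim: the map $i_X^*\mathcal K_{X/S}\to i_X^*Rj_{X*}j_X^*\mathcal K_{X/S}\to R\Psi_f\mathcal K$, composed with the specialization identification with $\mathcal K_{X_s/s}$, agrees with the base-change isomorphism $i_X^*\mathcal K_{X/S}\simeq\mathcal K_{X_s/s}$. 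The same must hold with supports in $W$.

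I expect this compatibility to be the main step. For $\mathcal K=\Lambda(d)[2d]$ and no supports, it is the statement that $\Lambda\to R\Psi_f\Lambda$ is the identity for smooth $f$ over a strictly henselian trait. This is standard and is exactly how the identification in \cite[\S3.5]{YZ22} is defined, so it should match by construction. For supports, I would deduce the supported version from the unsupported one using the localization triangles
\begin{equation*}
R\Gamma_W\to \mathrm{id}\to Rv_*v^*
\end{equation*}
for $v:X\setminus W\to X$. Both constructions are functorial in these triangles. Alternatively, I would use the fact that $i_X^*$ and $R\Psi$ commute with $R\Gamma_W$ via the base-change maps, and that these base-change maps are compatible with the units.

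Finally, I would check that the Tate twists and shifts in degree $2a$ play no role, since they are carried along formally. This yields \eqref{eq:extendable-class}.
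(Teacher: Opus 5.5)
Your proposal is correct and follows essentially the paper's proof: naturality of the comparison $i_X^*\to R\Psi_f\,j_X^*$ reduces the claim to the fact that $\theta_f$ composed with $i_X^*\mathcal K_{X/S}\to R\Psi_f(\mathcal K_{X_\eta/\eta})$ is the base-change isomorphism $i_X^*\mathcal K_{X/S}\simeq\mathcal K_{X_s/s}$ for smooth $f$, which the paper likewise just asserts from local acyclicity and purity. The one difference is how supports are handled, and here some care is needed: the paper writes $\gamma$ as a map $\ell_*\Lambda_W\to\mathcal K_{X/S}(a)[2a]$ with $\ell:W\to X$, so the support sits in the source, where $i_X^*\ell_*\Lambda_W\simeq\ell_{s*}\Lambda_{W_s}$ is a genuine isomorphism and no supported compatibility is needed; in your target-side version the map $i_X^*Rj_{X*}R\Gamma_{W_\eta}\to R\Gamma_{W_s}R\Psi_f$ is only a map and not an isomorphism in general (it fails for $W\subset X_s$, where the source vanishes), and the localization-triangle argument does not determine maps on the $R\Gamma_W$ terms, so you should rely on your second option, naturality of the base-change maps with respect to the units.
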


\begin{proof}
Let $\ell:W\rightarrow X$ be the closed immersion.  By definition, $\gamma$ is equivalent to a morphism
$\gamma:\ell_*\Lambda_W\to \mathcal K_{X/S}(a)[2a]$ and we have
\begin{equation}\label{eq:canw}
\ell_{s*}\Lambda_{W_s}\simeq i^*_X \ell_* \Lambda_W
 \longrightarrow
 R\Psi_f j^*_X \ell_* \Lambda_W \simeq 
 R\Psi_f(\ell_{\eta*}\Lambda_{W_\eta}),
\end{equation}
\begin{equation}\label{eq:cank}
i_X^*\mathcal K_{X/S}(a)[2a]
 \longrightarrow
 R\Psi_f(j_X^*\mathcal K_{X/S})(a)[2a]
 \simeq
 R\Psi_f(\mathcal K_{X_\eta/\eta})(a)[2a].
\end{equation}
by proper base change.
Moreover, we have the following commutative square
\begin{equation}\label{eq:nearby-naturality-square}
\begin{CD}
 \ell_{s*}\Lambda_{W_s}
 @>{\eqref{eq:canw}}>>
 R\Psi_f(\ell_{\eta*}\Lambda_{W_\eta})\\
 @V{i_X^*\gamma}VV
 @VV{R\Psi_f(j^*_X\gamma)}V\\
 i_X^*\mathcal K_{X/S}(a)[2a]
 @>{\eqref{eq:cank}}>>
 R\Psi_f(\mathcal K_{X_\eta/\eta})(a)[2a]
\end{CD}
\end{equation}
by the naturality of the nearby cycle functor.
By \cite[(3.67)]{YZ22}, we have a natural map
\begin{equation}\label{eq:display-1255}
\theta_f:
 R\Psi_f(\mathcal K_{X_\eta/\eta})\longrightarrow \mathcal K_{X_s/s}.
\end{equation}
Combining the definition of specialization in
\cite[(3.68), (3.69)]{YZ22} with \eqref{eq:nearby-naturality-square}, we have
\begin{equation}\label{eq:etspcal}
\spet(j_X^*\gamma)
 =\theta_f(a)[2a]\circ R\Psi_f(j^*_X\gamma)\circ\eqref{eq:canw}
 =\theta_f(a)[2a]\circ \eqref{eq:cank}\circ i^*_X\gamma.
\end{equation}
Since $f$ is smooth, local acyclicity and purity imply 
$\theta_f(a)[2a]\circ\eqref{eq:cank}$ is precisely the isomorphism
\begin{equation}\label{eq:display-1268}
i^*_X \mathcal K_{X/S}(a)[2a]\xrightarrow{\sim}\mathcal K_{X_s/s}(a)[2a],
\end{equation}
hence the right hand side of \eqref{eq:etspcal} is exactly $i^*_X \gamma$.

\end{proof}

\subsection{}\label{proofspcyc}
We use the following result of SGA~$4\frac12$ \cite{SGA4h}.  Assume $X'$ is smooth over $S$ of pure relative dimension $N$ ,
a closed subscheme $Z'\subset X'$ has fiber codimension at least $c$ and $\mathcal K$ is a
bounded coherent perfect $\mathcal O_{X'}$-complex supported on $Z'$ and of finite
Tor-dimension over $S$. Then \cite[Cycle, 2.3.3]{SGA4h} constructs a general cycle class
\begin{equation}\label{eq:display-1282}
 {\rm cl}_S(Z',\mathcal K)\in H_{Z'}^{2c}(X',\Lambda(c)).
\end{equation}
Its formation commutes with arbitrary base change, and when $S$ is a field, it agrees with the usual supported cycle class and factors through the Chow group. See \cite[Cycle, Lemme~2.3.7, 2.2.10]{SGA4h}.
\subsection{}\label{pf48}
We now prove Theorem~\ref{spcycommthm}.
\begin{proof}[Proof of Theorem~\ref{spcycommthm}]
Both paths in \eqref{eq:all-degree-square} are homomorphisms on Chow groups, so
it is enough to treat a prime cycle $[W_\eta]$ supported on $Z$. 

Let $W\subset Z$ be the scheme-theoretic closure of $W_\eta$ which is
integral.  Multiplication by $u$ is injective in every affine coordinate ring
of $W$, so $W$ is torsion-free over 
$\Spec R$, hence flat over $S$ by \cite[Tag~0539]{Stacks}.  If the special fiber $W_s$ is
nonempty, flatness implies $\dim W_s=r$. Set $c=d-r$, the codimension. We have
\begin{equation}\label{eq:prime-sp-cycle}
 \spcy[W_\eta]
 =\sum_{\xi\in(W_s)^{(0)}}
  m_\xi[\overline{\{\xi\}}],
 \qquad
 m_\xi=\operatorname{length}_{\mathcal O_{ W,\xi}}
       (\mathcal O_{ W,\xi}/u)
\end{equation}
by the previous description of $\spcy([V])$ of a horizontal integral subscheme $V$. If $W_s$ is empty, the right side of \eqref{eq:prime-sp-cycle} is zero.  
In the following we assume $W_s\neq \varnothing$.

The scheme $X$ is regular and Noetherian because $S$ is a
regular trait and $f$ is smooth of finite type; see \cite[Tags~0H7S and 0FDC]{Stacks}.  
Hence the coherent module $\mathcal O_{W}$ is perfect as an $\mathcal O_X$-complex by \cite[Tag~0FDC]{Stacks}. 
Moreover, flatness of $W$ over $S$ implies $\mathcal O_{W}$ is finite Tor-dimension over $S$.  Thus by \cite[Cycle, 2.3.3]{SGA4h} we may form
\begin{equation}\label{eq:relcyc}
{\rm cl}_S(W,\mathcal O_{W})
 \in H_{W}^{2c}(X,\Lambda(c))
 \simeq H^{-2r}_{W}(X,\mathcal K_{X/S}(-r)).
\end{equation}
On the generic fiber, the length of
$\mathcal O_{W_\eta}$ is one.  On the special fiber, flatness gives
$Li_X^*\mathcal O_{W}=\mathcal O_{W_s}$, whose generic
length at $\xi$ is the integer $m_\xi$ in
\eqref{eq:prime-sp-cycle}.  Therefore by \eqref{eq:prime-sp-cycle} and \cite[Cycle, Lemme~2.3.7, Th\'eor\`eme~2.3.8(i)] {SGA4h}, we have
\begin{equation}\label{eq:relative-class-generic}
\begin{aligned}
j_X^*{\rm cl}_S(W,\mathcal O_{W})&={\rm cl}_\eta[W_\eta],
\end{aligned}
\end{equation}
\begin{equation}\label{eq:relative-class-special}
\begin{aligned}
i_X^*{\rm cl}_S(W,\mathcal O_{W})
 &=\sum_{\xi}m_\xi{\rm cl}_s[\overline{\{\xi\}}]
 ={\rm cl}_s\bigl(\spcy[W_\eta]\bigr).
\end{aligned}
\end{equation}
Applying Lemma~\ref{etextsp}, we have
\begin{equation}\label{eq:display-1329}
 \spet{\rm cl}_\eta[W_\eta]
 =\spet(j_X^*{\rm cl}_S(W,\mathcal O_{W}))
 =i_X^*{\rm cl}_S(W,\mathcal O_{W})
 ={\rm cl}_s\bigl(\spcy[W_\eta]\bigr).
\end{equation}
This proves the commutativity of \eqref{eq:all-degree-square} on prime cycles.  Fulton
\cite[\S20.3]{Ful98} proves that $\spcy$ respects rational equivalence, and
the supported cycle class map factors through Chow groups by \cite[Cycle, \S2.2]{SGA4h}.
The equality therefore descends to ${\rm CH}_r(Z_\eta)$.
\end{proof}

\begin{corollary}
\label{clspcomm}
In the notation of Lemma~\ref{gyssplem}, suppose in addition that $S$
is a strictly henselian trait  and $B\rightarrow S$ is smooth of pure relative dimension $d$.  Put $0\leq q=m-r\leq d$, then in $H^{-2q}_{Z_s}(B_s,\mathcal K_{B_s/s}(-q))
 \simeq H^{2(d-q)}_{Z_s}(B_s,\Lambda(d-q))$
 we have
\begin{equation}\label{eq:clspcomm}
 \spet{\rm cl}_\eta(\sigma_\eta^!\gamma_\eta)
 ={\rm cl}_s(\sigma_s^!\gamma_s).
\end{equation}
More generally, let
\begin{equation}\label{eq:display-1357}
 \widetilde{\alpha}\in Z_{m+1}(C/S)
\end{equation}
be any cycle, denote its class in ${\rm CH}_{m+1}(C/S)$ by $\gamma$, and set
\begin{equation}\label{eq:display-1361}
 \alpha_\eta:=j_C^*\gamma\in{\rm CH}_m(C_\eta),
 \qquad
 \alpha_s:=i_C^!\gamma\in{\rm CH}_m(C_s).
\end{equation}
Then \eqref{eq:clspcomm} has the form
\begin{equation}\label{eq:display-1367}
 \spet{\rm cl}_\eta\!\left(\sigma_\eta^!\alpha_\eta\right)
 =
 {\rm cl}_s\!\left(\sigma_s^!\alpha_s\right).
\end{equation}

\end{corollary}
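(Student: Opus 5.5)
The plan is to combine Lemma~\ref{gyssplem} with Theorem~\ref{spcycommthm}, with the refined Gysin maps on the closed subscheme $Z=B\times_E C$ as the link.

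\textbf{First claim.} By Lemma~\ref{gyssplem} we have
\begin{equation*}
\spcy(\sigma_\eta^!\gamma_\eta)=\sigma_s^!\gamma_s \quad\text{in }{\rm CH}_q(Z_s).
\end{equation*}
Here $Z=B\times_E C$ is a closed subscheme of $B$, and $B\to S$ is smooth of pure relative dimension $d$ over a strictly henselian trait. So Theorem~\ref{spcycommthm} applies with $(X,Z)$ replaced by $(B,Z)$ and with $r$ replaced by $q$, which is allowed because $0\le q\le d$. It gives
\begin{equation*}
\spet\circ{\rm cl}_\eta={\rm cl}_s\circ\spcy \quad\text{on }{\rm CH}_q(Z_\eta).
\end{equation*}
Applying this identity to $\sigma_\eta^!\gamma_\eta\in{\rm CH}_q(Z_\eta)$ and substituting the first display yields \eqref{eq:clspcomm}. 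The identification
\begin{equation*}
H^{-2q}_{Z_s}(B_s,\mathcal K_{B_s/s}(-q))\simeq H^{2(d-q)}_{Z_s}(B_s,\Lambda(d-q))
\end{equation*}
is the purity isomorphism already used in \eqref{eq:refcyc}, so nothing further is needed there.

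\textbf{Second claim.} This is the special case of the first in which $\gamma$ is the class of the cycle $\widetilde\alpha$. By definition $\alpha_\eta=j_C^*\gamma=\gamma_\eta$ and $\alpha_s=i_C^!\gamma=\gamma_s$, so \eqref{eq:display-1367} is literally \eqref{eq:clspcomm}. I would also remark that when the components of $\widetilde\alpha$ are flat over $S$, these classes are the naive fibre cycles, by Corollary~\ref{flatgyssp}.

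\textbf{Expected obstacle.} The only point needing care is bookkeeping of dimensions. Lemma~\ref{gyssplem} uses the relative dimension convention ${\rm CH}_{m+1}(C/S)$, while Theorem~\ref{spcycommthm} uses absolute dimensions on the fibres. I need to check that $\sigma_\eta^!\gamma_\eta$ has fibre dimension $q=m-r$, which holds because $\sigma$ is a regular immersion of codimension $r$. I also need the cohomological degree $2(d-q)$ to match the codimension of $q$-cycles in the $d$-dimensional fibres $B_t$. Finally, I would confirm that the map $\spet$ for $B$ is the one defined in \eqref{eq:etsp}, constructed with $B$ in place of $X$.
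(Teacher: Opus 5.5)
Your proposal is correct and is the paper's proof. Apply Theorem~\ref{spcycommthm}, with $(B,Z)$ and $q$ in place of $(X,Z)$ and $r$, to $\sigma_\eta^!\gamma_\eta\in{\rm CH}_q(Z_\eta)$, then replace $\spcy(\sigma_\eta^!\gamma_\eta)$ by $\sigma_s^!\gamma_s$ using \eqref{eq:gysspsp}; the second assertion is the same statement with $\gamma$ represented by the cycle $\widetilde\alpha$.
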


\begin{proof}
Apply Theorem~\ref{spcycommthm} to
$\sigma_\eta^!\gamma_\eta$ we have
\begin{equation}\label{eq:display-1380}
 \spet{\rm cl}_\eta(\sigma_\eta^!\gamma_\eta)
 ={\rm cl}_s\!\left(\spcy(\sigma_\eta^!\gamma_\eta)\right).
\end{equation}
Now use \eqref{eq:gysspsp} we get the desired conclusion.
\end{proof}

   \begin{proposition}\label{qafeq}
      For a separated scheme $X$ of finite type over $k$, the following conditions are equivalent: 
      \begin{enumerate}
          \item there is a finite-dimensional subspace $V\subset\Gamma(X,\mathcal O_X)$ such that
             \begin{equation}\label{eq:exact-generation}
             V\otimes_k\mathcal O_X\longrightarrow\Omega^1_{X/k},
              \qquad v\otimes g\longmapsto g\cdot dv,
              \end{equation}
        is surjective.
          \item  $X$ is quasi-affine.
      \end{enumerate}
   \end{proposition}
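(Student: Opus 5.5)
We prove both implications directly; the substantive direction is (1)$\Rightarrow$(2).

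\emph{(2)$\Rightarrow$(1).} Assume $X$ is quasi-affine, and fix an open immersion $X\hookrightarrow \bar X$ into an affine scheme of finite type over $k$. For instance, take $\bar X=\Spec A$ with $A\subset\Gamma(X,\mathcal O_X)$ a finitely generated subalgebra such that $X\to\Spec A$ is an open immersion. Choose generators $v_1,\dots,v_N$ of $A$ and let $V$ be their $k$-span. On $\bar X$ the sheaf $\Omega^1_{\bar X/k}$ is generated by $dv_1,\dots,dv_N$. Since $\Omega^1_{X/k}=\Omega^1_{\bar X/k}|_X$, the map \eqref{eq:exact-generation} is surjective.

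\emph{(1)$\Rightarrow$(2).} Let $V$ have basis $v_1,\dots,v_N$ and consider $\phi=(v_1,\dots,v_N):X\to\A^N_k$. Surjectivity of \eqref{eq:exact-generation} says that $\phi^*\Omega^1_{\A^N}\to\Omega^1_{X/k}$ is surjective, i.e.\ $\Omega^1_{X/\A^N}=0$. Hence $\phi$ is unramified, so it is quasi-finite and separated. Because $X$ is separated of finite type over $k$, the map $\phi$ is also of finite type. By Zariski's Main Theorem, $\phi$ then factors as an open immersion $X\hookrightarrow X'$ followed by a finite morphism $X'\to\A^N_k$. The scheme $X'$ is finite over an affine scheme, hence affine, and therefore $X$ is quasi-affine.

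\emph{Where care is needed.} The only step requiring attention is in (2)$\Rightarrow$(1): the open immersion into $\Spec\Gamma(X,\mathcal O_X)$ is standard, but $\Gamma(X,\mathcal O_X)$ need not be finitely generated. We therefore use the standard fact \cite[Tag~01P9]{Stacks} that a quasi-compact quasi-affine scheme admits an open immersion into $\Spec A$ for some finitely generated subalgebra $A\subseteq\Gamma(X,\mathcal O_X)$. Equivalently, one can embed $X$ as an open subscheme of some affine scheme of finite type over $k$ and restrict the coordinate functions. With this fact in hand, the argument above applies unchanged.
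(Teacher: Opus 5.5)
Your proof is correct and follows the paper's argument. For (1)$\Rightarrow$(2) you show $\Omega^1_{X/\A^N_k}=0$, deduce that the map is unramified, hence quasi-finite and separated, and apply Zariski's Main Theorem, exactly as the paper does. For (2)$\Rightarrow$(1) the paper uses a closed immersion of $X$ into an open subscheme of $\A^n_k$ rather than your open immersion into a finite-type affine scheme; these formulations are equivalent (the closure of $X$ in $\A^n_k$ is such an affine scheme), and surjectivity on differentials follows in the same way.
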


\begin{proof}
First assume condition (2). By \cite[Tag~04II]{Stacks}, there is a closed immersion $i:X\rightarrow U$, where $U$ is an open subscheme of $\A^n_k$. Therefore, the natural homomorphism 
$i^* \Omega^1_{U/k}\rightarrow \Omega^1_{X/k}$
is a surjection. The consequence follows from 
\begin{equation}\label{eq:display-1408}
i^* \Omega^1_{U/k} \simeq \bigoplus^n_{j=1}\mathcal{O}_X dx_j.
\end{equation}

Now we show that (1) implies (2). Let $V$ be the subspace of $\Gamma(X,\mathcal{O}_X)$ satisfying the condition, and fix a basis $g_1,\ldots,g_N$. Put
\begin{equation}\label{eq:display-1413}
g=(g_1,\ldots,g_N):X\to\A^N_k.
\end{equation}
In the cotangent sequence 
\begin{equation}\label{eq:ctgseq}
 g^*\Omega^1_{\A^N_k/k}\longrightarrow\Omega^1_{X/k}
 \longrightarrow\Omega^1_{X/\A^N_k}\longrightarrow0,
\end{equation}
the first arrow is $dx_i\mapsto dg_i$, where $x_1,\cdots,x_N$ are the coordinates of $\A^N_k$. The surjectivity condition hence implies $\Omega^1_{X/\A^N_k}=0$ and $g$ is unramified. An unramified morphism is locally quasi-finite; because $g$ is also quasi-compact, it is quasi-finite. See \cite[Tags~02V5 and 01TJ]{Stacks}.

The diagonal map $X\rightarrow X\times_{\A^N_k}X$ is a base change of the diagonal $X\rightarrow X\times_k X$; hence $g$ is separated.  We may apply Zariski's main theorem to get the factorization
\begin{equation}\label{eq:display-1424}
 X\overset{j}\rightarrow\overline X
 \overset{\overline g}{\rightarrow}\A^N_k,
\end{equation}
where $j$ is an open immersion and $\overline g$ is finite.
Hence $\overline X$ is affine, and its open subscheme
$X$ is quasi-affine.

\end{proof}
\subsection{}
In the following we use the notations of Theorem~\ref{thmqafmil}. Let $V$ be a subspace of $\Gamma(X,\mathcal O_X)$ obtained by Proposition~\ref{qafeq}, with $N=\dim V$, $\mathbb{V}:= \Spec(\operatorname{Sym}_k V^{\vee})\simeq \A^N_k$. Define
\begin{equation}\label{eq:amap}
 a:X\times_k \mathbb{V}\longrightarrow T^{\ast}X,
 \qquad (x,g)\longmapsto df_x+dg_x.
\end{equation}
\begin{lemma}
    \label{genpert}
There is a dense open subset $\VV^\circ\subset \VV$ such that
$(df+dg)^{-1}(SS(\mathcal{F}))$ is finite for every $g\in \VV^\circ(k)$.  One may choose
$g\in \VV(k)$ such that the generic point of the line $k\cdot g$ belongs to
$\VV^\circ$.
\end{lemma}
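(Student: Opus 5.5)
This is a dimension count on the incidence variety
\begin{equation*}
\mathcal Z:=a^{-1}\bigl(SS(\mathcal F)\bigr)\subset X\times_k\VV ,
\end{equation*}
followed by generic finiteness of the projection $\pi:\mathcal Z\to\VV$. The fibre of $\pi$ over $g\in\VV(k)$ is exactly $(df+dg)^{-1}SS(\mathcal F)$.

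\textbf{Step 1: $a$ is smooth.} Since $V\otimes_k\mathcal O_X\to\Omega^1_{X/k}$ is surjective by Proposition~\ref{qafeq}, for each $x\in X$ the linear map $V\to T^*_xX$, $g\mapsto dg_x$, is surjective. Hence
\begin{equation*}
a_x:\VV\to T^*_xX,\qquad g\mapsto df_x+dg_x,
\end{equation*}
is a surjective affine-linear map with kernel of dimension $N-d$. Globally, $a$ is a morphism of $X$-schemes $X\times\VV\to T^*X$ which is fibrewise an affine-linear surjection. Equivalently, after translating by the section $df$, it is the composition of the surjection of vector bundles $V\otimes\mathcal O_X\to T^*X$ with an automorphism. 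So $a$ is smooth, surjective, and of relative dimension $N-d$.

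\textbf{Step 2: dimension of $\mathcal Z$.} The singular support $SS(\mathcal F)$ has dimension $d$ (purely, component by component). By Step 1, $\dim\mathcal Z\le d+(N-d)=N$.

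\textbf{Step 3: generic finiteness of $\pi$.}
\begin{itemize}
\item If $\pi$ is not dominant on some irreducible component $\mathcal Z_i$, then $\overline{\pi(\mathcal Z_i)}$ is a proper closed subset of $\VV$, and we remove it.
\item If $\pi$ is dominant on $\mathcal Z_i$, then $\dim\mathcal Z_i\le N=\dim\VV$ forces the generic fibre to have dimension $0$. By Chevalley's upper semicontinuity of fibre dimension (or generic flatness), there is a dense open subset of $\VV$ over which $\pi|_{\mathcal Z_i}$ is quasi-finite.
\end{itemize}
Intersecting these finitely many dense opens gives $\VV^\circ$. For $g\in\VV^\circ(k)$ the fibre is quasi-finite of finite type over $k$, hence a finite set of closed points; only finiteness as a set is needed. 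This fibre is the closed subset $(df+dg)^{-1}SS(\mathcal F)$ of $X$, which proves the first assertion.

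\textbf{Step 4: choice of $g$.} Consider the map $\mathbb A^1\times(\VV\setminus\{0\})\to\VV$, $(u,g)\mapsto ug$. Its image avoids $\VV\setminus\VV^\circ$ at the generic point of $\mathbb A^1$ exactly when the line $k\cdot g$ is not contained in the closed set $\VV\setminus\VV^\circ$. The set of $g$ whose line lies in $\VV\setminus\VV^\circ$ is contained in $\VV\setminus\VV^\circ$ itself, taking $u=1$. Since $k$ is algebraically closed, the dense open $\VV^\circ$ has a $k$-point, and any $g\in\VV^\circ(k)$ works. Then the generic point of $k\cdot g$ lies in $\VV^\circ$, because the line meets the open set $\VV^\circ$ at $g$ and the line is irreducible.

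The main obstacle is Step 1: verifying that $a$ is smooth, and not merely surjective on points, so that the dimension bound $\dim a^{-1}(C)\le\dim C+N-d$ holds. The fibrewise affine-linear structure gives this directly; flatness with equidimensional fibres would also suffice.
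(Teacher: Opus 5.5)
Your proposal is correct and follows essentially the same route as the paper: you get smoothness of $a$ (relative dimension $N-d$) from the surjectivity of $V\to T^*_xX$, bound the dimension of $a^{-1}(SS(\mathcal F))$, discard the closures of the images of the non-dominant components, apply generic fibre dimension to the dominant ones, and finally take any $g\in\VV^\circ(k)$, whose line meets $\VV^\circ$ and therefore has its generic point there. The only differences are cosmetic: you use just the upper bound $\dim\le N$ where the paper asserts equality, and you phrase the smoothness of $a$ via translation by the section $df$.
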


\begin{proof}
The map $a$ lies over the identity of
$X$, and its vertical differential is calculated by the surjective evaluation
$V\to T_x^*X$, $g\mapsto dg_x$. Hence $a$ is smooth of relative dimension $N-d$.  Since each component of $SS(\mathcal{F})$ has
dimension $d$,
every component of
\begin{equation}\label{eq:display-1453}
 Q=a^{-1}(SS(\mathcal{F}))\subset X\times \VV
\end{equation}
has dimension $N$.
Let $q:Q\to \VV$ be the projection, let $Q_1,\cdots,Q_m$ be the irreducible components of $Q$.  Put 
	\begin{equation}\label{eq:display-1459}
D:=\bigcup_{\overline{q(Q_i)}\subsetneq \VV} \overline{q(Q_i)}\subsetneq \VV,
\end{equation}
$U_D:=\VV\setminus D$ is the complement. Then for any $v\in U_D$, and $Q_i$ with $\overline{q(Q_i)}\subsetneq \VV$, we have $(Q_i)_v=\varnothing$.

For $Q_j$ such that $Q_j$ dominates $\VV$, i.e. $\overline{q(Q_j)}= \VV$, we have 
\begin{equation}\label{eq:display-1465}
\dim (Q_j)_{\eta}=\dim Q_j - \dim \VV =0
\end{equation}
where $\eta$ is the generic point of $\VV$.
By a generic fiber dimension result \cite[Tag~05F7]{Stacks}, $\dim (Q_j)_u = 0$ for all $u$ in an open subscheme $U_j$ of $\VV$. Therefore,
\begin{equation}\label{eq:display-1470}
\VV^\circ:= (\bigcap_{Q_j\mathrm{\ dominates\ }\VV}U_j)\cap U_D
\end{equation}
has the required property.

Choose $v\in \VV^\circ(k)$.  The line $k\cdot v$ is not contained in
$\VV\setminus \VV^\circ$, because it contains $v$; hence its generic point belongs to
$\VV^\circ$.  Now $g=v$ verifies the demand of the lemma.
\end{proof}

\subsection{}
\label{qafpfdef}

Choose $g$ as in Lemma~\ref{genpert}.  Let $R$ be the strict henselization of
$k[u]_{(u)}$, set $S=\Spec R$, write $K=\operatorname{Frac}(R)$, and put
$\eta^{\operatorname{perf}}=\Spec K^{\operatorname{perf}}$. The natural inclusion $k\rightarrow R$ gives $S\rightarrow \Spec k$. Let $\eta=\Spec K$, $s=\Spec k$ be the generic and special point of $S$, respectively. 
Define
\begin{equation}\label{eq:display-1488}
 \mathcal X=X\times_kS,\qquad
 \mathcal Y=\A^1_S,\qquad
 \widetilde{\mathcal F}=\operatorname{pr}_X^*\mathcal F,
\end{equation}
\begin{equation}\label{eq:display-1493}
 \phi:\mathcal X\rightarrow \mathcal Y,\qquad\phi(x,u)=f(x)+u g(x),
\end{equation}
where $\operatorname{pr}_X:\mathcal X\to X$ is the projection.  The differential $d\phi$ gives a section
\begin{equation}\label{eq:sigma}
 \sigma=d\phi=df+u\,dg:
 \mathcal X\longrightarrow T^{\ast}(\mathcal X/S)=T^{\ast}X\times_kS.
\end{equation}
Put $\mathcal W=\sigma^{-1}(SS(\mathcal F)\times_kS)$,
then by Lemma~\ref{genpert},
\begin{equation}\label{eq:display-1503}
 \mathcal W_s=Z=Z_{X,f},\qquad
 \mathcal W_\eta\text{ is finite over }K,
\end{equation}
and hence  $\mathcal W_{\eta^{\operatorname{perf}}}$ is finite over $K^{\operatorname{perf}}$.

For $t\in\{\eta,s\}$, 
$\phi_t$ is $(SS(\mathcal F)\times_k S)_t$-transversal outside $\mathcal W_t$.
For $t=s$, \cite[Lemma~4.2(5)]{Sai17a} implies that
$\phi_s$ is universally locally acyclic relatively to
$\widetilde{\mathcal F}_s$ outside $\mathcal W_s$.
The projection
\begin{equation}\label{eq:display-1517}
 \operatorname{pr}_S:\mathcal X=X\times_k S\rightarrow S
\end{equation}
is universally locally acyclic relatively to
$\widetilde{\mathcal F}$, and
\begin{equation}\label{eq:display-1522}
 R\Psi_{\operatorname{pr}_S}(\widetilde{\mathcal F}_\eta)
 \simeq \widetilde{\mathcal F}_s
 \simeq \mathcal F.
\end{equation}

\subsection{}
Retain the notation of \S\ref{qafpfdef}. Initially, the generic fiber is over $K$ which is not perfect in general; however, the construction of characteristic cycle in \cite{Sai17a} needs the base field to be perfect. In the following, we build a descent result to pass to the perfect closure of $K$.

\begin{lemma}
\label{perfdesc}
Assume that $k$ is algebraically closed. Let $X$
be a smooth and purely $d$-dimensional scheme over $k$, let
$\mathcal{F}\in D_{\rm ctf}(X,\Lambda)$, and let
$K^{\operatorname{perf}}$ be the perfect closure of $K$.
Then we have
\begin{equation}\label{eq:CC-perfect-pullback}
 SS
   (\widetilde{\mathcal F}_{\eta^{\operatorname{perf}}})
   =SS(\mathcal F)\times_k\eta^{\operatorname{perf}},
 \qquad
 CC
   (\widetilde{\mathcal F}_{\eta^{\operatorname{perf}}})
   =CC(\mathcal F)\times_k\eta^{\operatorname{perf}},
\end{equation}
where the second term is the flat pullback of cycle. 
\end{lemma}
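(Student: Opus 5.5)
The plan is to reduce both equalities to the compatibility of singular support and characteristic cycle with base change along a field extension of an algebraically closed field. Write $L=K^{\operatorname{perf}}$ and $\pi:X_L=X\times_k\Spec L\to X$ for the projection, so $\widetilde{\mathcal F}_{\eta^{\operatorname{perf}}}=\pi^*\mathcal F$. Since $k$ is algebraically closed, $L$ is a filtered colimit of finitely generated $k$-algebras $A$ that are smooth over $k$; indeed $L$ is perfect and every finitely generated subextension of $L/k$ is separable, hence generically smooth. For each such $A$ with $T=\Spec A$, the projection $\pi_T:X\times_kT\to X$ is smooth. A smooth morphism is properly $C$-transversal for every conical $C$. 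Hence \cite[Theorem~7.6]{Sai17a}, together with Beilinson's compatibility of singular support with smooth pullback \cite{Bei16}, gives
\begin{equation*}
SS(\pi_T^*\mathcal F)=SS(\mathcal F)\times_kT,\qquad CC(\pi_T^*\mathcal F)=(-1)^{\dim T}\,(d\pi_T)_*{\rm pr}^!CC(\mathcal F).
\end{equation*}
One then identifies the right-hand side with the flat pullback $CC(\mathcal F)\times_kT$ viewed inside $T^*(X\times_kT)$. The sign $(-1)^{\dim T}$ should disappear once we restrict to relative cotangent vectors $T^*(X_T/T)=T^*X\times_kT$, which is the relevant cotangent bundle over the generic fiber.

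Next we pass to the limit over $T$, i.e. to the generic point of $T$ and then to $L$. The aim is the identity $SS(\pi^*\mathcal F)=SS(\mathcal F)\times_k\eta^{\operatorname{perf}}$ for the singular support over the perfect field $L$.

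For the inclusion $\supseteq$, use the characterization of $SS$ by local acyclicity of test pairs $(h,f)$. Any test pair over $L$ descends to some $T$, and since local acyclicity spreads out and specializes, the set of non-acyclic covectors is compatible with the base change.

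For the reverse inclusion, observe that pulling back by $X_L\to X$ preserves local acyclicity relative to test functions defined over $k$. Since $k$ is algebraically closed, a $k$-rational test function $f:X\to\A^1_k$ that is not locally acyclic at a geometric point of $X$ remains non-locally-acyclic after the extension of scalars $k\to L$, because vanishing cycles commute with extension of an algebraically closed base field. This shows that every component of $SS(\mathcal F)\times_k\eta^{\operatorname{perf}}$ lies in $SS(\pi^*\mathcal F)$.

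For $CC$, use its uniqueness \cite[Theorem~5.18]{Sai17a}: it suffices to check that the cycle $CC(\mathcal F)\times_k\eta^{\operatorname{perf}}$ satisfies the Milnor formula \eqref{introSaiM} for $\pi^*\mathcal F$ at every isolated characteristic point of every test function defined over (a finite extension of) $L$. Such a test situation spreads out over some smooth $T$. By the smooth-pullback formula above, $CC(\mathcal F)\times_kT=CC(\pi_T^*\mathcal F)$ up to the sign, so the intersection numbers and the total dimensions of vanishing cycles over $T$ are governed by Saito's Milnor formula on $X\times_kT$. Restricting to the generic point of $T$ requires two facts. First, the intersection multiplicity $(CC,df)$ is constant in the flat family over an open of $T$. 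Second, $\operatorname{dimtot}R\Phi$ at the generic point equals that at nearby closed points. This holds after shrinking $T$, by generic constancy of Swan conductors, or more directly by Deligne--Laumon type semicontinuity in the universally locally acyclic situation.

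The main obstacle is this final step: matching the Milnor formula over the imperfect-to-perfect generic point with the formula over closed points of $T$. I would attempt it first by reducing, via the pushforward to a curve and the conductor formula (Corollary~\ref{cor:projective-conductor}), to Artin conductors on a relative curve over $T$, where generic constancy of the Swan conductor after shrinking $T$ is standard.
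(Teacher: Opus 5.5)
Your outline for the $CC$ part matches the paper's. You reduce via the uniqueness in \cite[Theorem~5.9]{Sai17a} to the Milnor formula for $B=CC(\mathcal F)\times_k\eta^{\operatorname{perf}}$, spread a test pair out over a smooth $k$-algebra $A\subset K^{\operatorname{perf}}$ with $T=\Spec A$, and compare the geometric generic point with closed points. But the comparison step, which is the whole content of the lemma, is not supplied, and the mechanism you indicate for it is wrong.

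Write the spread-out test pair as an \'etale map $(h,p)\colon U_A\to X\times_kT$ and a $T$-morphism $f_A\colon U_A\to\A^1_T$. Put $C=SS(\mathcal F)$, $C_A=U_A\times_XC\subset T^*(U_A/T)$, $B_A=h^*CC(\mathcal F)$ and $Z_A=(df_A)^{-1}C_A$, which is quasi-finite over $T$ after shrinking. Now $f_A$ is not a function on the $k$-scheme $X\times_kT$ with isolated characteristic points. So Saito's Milnor formula on $X\times_kT$ says nothing here, and neither does the smooth-pullback formula for $\pi_T$ with its sign $(-1)^{\dim T}$; that sign does not ``disappear'' on relative cotangent vectors.

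What the paper uses instead is that a closed point $t\in T$ has $\kappa(t)=k$. Hence $h_t\colon U_{A,t}\to X$ is \'etale and $B_{A,t}=CC(\mathcal F_{A,t})$ with no sign \cite[Lemma~5.11(2)]{Sai17a}. Let $I$ be the fibrewise local intersection number with $B_A$, and $\phi$ be $-\operatorname{dimtot}$ of the fibrewise vanishing cycles of $f_A$. Saito's Milnor formula on the closed fibre then gives $I(\bar z)=\phi(\bar z)$ at geometric points above closed points.

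The transfer to the geometric point $\bar u$ above $u_0$ needs three things you only name or leave open.
\begin{enumerate}
\item[(a)] $I$ is constructible (and flat) on $Z_A$. The complex $\mathcal O_{U_A\times_XC_\alpha}\otimes^{\mathbf L}_{df_A}\mathcal O_{U_A}$ is perfect and of finite Tor-dimension over $T$, because $U_A\times_XC_\alpha$ is flat over $T$. It is supported on $Z_A$, which is quasi-finite over $T$, so \cite[Lemma~2.3]{Sai17a} applies.
\item[(b)] $\phi$ is constructible and flat by \cite[Proposition~2.16]{Sai17a} (Deligne--Laumon). This requires $p$ to be universally locally acyclic, and $f_A$ to be universally locally acyclic off $Z_A$. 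You must derive both from $SS(\mathcal F_A)=h^\circ C=C_A\oplus 0$ by checking $C$-transversality of $(h,p)$ everywhere and of $(h,f_A)$ off $Z_A$.
\item[(c)] On each component of $Z_A$ there is a dense open on which $I-\phi$ is constant, and it contains a closed point since $Z_A$ is Jacobson. So $I-\phi$ vanishes at generic points of components, hence at $\bar u$, and everywhere by \cite[Lemma~2.2(3)]{Sai17a}.
\end{enumerate}

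Your proposed detour through Corollary~\ref{cor:projective-conductor} does not replace (a)--(c). It needs a proper map over a perfect field. Over $K^{\operatorname{perf}}$ it concerns the unknown $CC(\widetilde{\mathcal F}_{\eta^{\operatorname{perf}}})$ rather than $B$. And comparing fibres would still require the same two constancy statements.

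For $SS$, the paper simply cites \cite[Theorem~1.4(iii)]{Bei16}. Your argument for $SS(\mathcal F)\times_k\eta^{\operatorname{perf}}\subseteq SS(\widetilde{\mathcal F}_{\eta^{\operatorname{perf}}})$ only produces some non-transversal covectors above points where a $k$-rational test pair fails to be locally acyclic. To fill out whole components you would need a detection property for $SS(\mathcal F)$ that you neither state nor prove.
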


\begin{proof}
Under the canonical identification
\begin{equation}\label{eq:display-1559}
 T^{\ast}(\mathcal X_{\eta^{\operatorname{perf}}}/K^{\operatorname{perf}})
 \simeq
 T^{\ast}(X/k)\times_k\eta^{\operatorname{perf}},
\end{equation}
the complex $\widetilde{\mathcal F}_{\eta^{\operatorname{perf}}}$ is the
scalar extension of $\mathcal F$. Hence by
\cite[Theorem~1.4(iii)]{Bei16} we have
\begin{equation}\label{eq:display-1567}
 SS(\widetilde{\mathcal F}_{\eta^{\operatorname{perf}}})
 =SS(\mathcal F)\times_k\eta^{\operatorname{perf}}.
\end{equation}
Put
\begin{equation}\label{eq:display-1573}
 C=SS(\mathcal F)=\bigcup_\alpha C_\alpha,
 \qquad
 CC(\mathcal F)=\sum_\alpha m_\alpha[C_\alpha],
\end{equation}
where the $C_\alpha$ are all the irreducible components of $C$, allowing
$m_\alpha=0$, and set
\begin{equation}\label{eq:display-1580}
 C_{\operatorname{perf}}
 =C\times_k\eta^{\operatorname{perf}},
 \qquad
 B=CC(\mathcal F)\times_k\eta^{\operatorname{perf}}.
\end{equation}
Every irreducible component of $C_{\operatorname{perf}}$ has dimension $d$,
and $B$ is supported on $C_{\operatorname{perf}}$. It suffices to verify the
Milnor formula for $B$.

Let
\begin{equation}\label{eq:display-1591}
 j:U\longrightarrow \mathcal X_{\eta^{\operatorname{perf}}},
 \qquad
 f:U\longrightarrow Y
\end{equation}
be a test pair with an isolated characteristic point, i.e., $j$ is \'etale, $Y$ is a
smooth curve over $K^{\operatorname{perf}}$, and $u_0\in U$ is a closed
point, with a geometric point $\bar u$ above it, which is at most an isolated
$j^*C_{\operatorname{perf}}$-characteristic point of $f$.

If $f$ is $j^*C_{\operatorname{perf}}$-transversal at $u_0$, then we may shrink $U$
around $u_0$. The local intersection number is zero and
\cite[Lemma~4.2(5)]{Sai17a} gives
\begin{equation}\label{eq:display-1604}
 R\Phi_{\bar u}
 \bigl(j^*\widetilde{\mathcal F}_{\eta^{\operatorname{perf}}},f\bigr)=0.
\end{equation}
Thus the Milnor formula holds in this case.

Assume henceforth that $u_0$ is isolated characteristic point. 
In this case, we may assume $Y=\A^1_{K^{\operatorname{perf}}}$ 
by shrinking around $u_0$ and passing to an \'etale cover.
Write
\begin{equation}\label{eq:display-1614}
 df:U\longrightarrow T^{\ast}(U/K^{\operatorname{perf}})
\end{equation}
for the section defined by the relative differential of $f$. Since $j$ is \'etale, we may regard 
$j^*C_{\operatorname{perf}}$
as a closed conical subset of
$T^{\ast}(U/K^{\operatorname{perf}})$, endowed with its reduced scheme structure.
After shrinking $U$ once more, assume that the scheme-theoretic
characteristic locus
\begin{equation}\label{eq:display-1626}
 Z_{\operatorname{perf}}
 :=U\times_{df,\,T^{\ast}(U/K^{\operatorname{perf}})}
   j^*C_{\operatorname{perf}}
\end{equation}
is finite over $K^{\operatorname{perf}}$.

The field $K$ is algebraic over $k(u)$, since it is the fraction field of the
strict henselization of $k[u]_{(u)}$, and
$K^{\operatorname{perf}}/K$ is purely inseparable. Hence
$K^{\operatorname{perf}}/k(u)$ is algebraic.
Since all data in the test pair are of
finite presentation, they descend to a finitely generated integral
$k[u]$-subalgebra $A_0\subset K^{\operatorname{perf}}$.
The extension $\operatorname{Frac}(A_0)/k(u)$ is finite. Since $k$ is
perfect, we may find a localization of $A_0$, denoted by $A$, such that
$ A\subset K^{\operatorname{perf}}$
is a finitely generated smooth integral $k$-algebra containing $k[u]$ and
carrying the descended data. Put $ M=\Spec A, F_0=\operatorname{Frac}(A).$
Thus $A$ is of finite type over $k$, and
$K^{\operatorname{perf}}/F_0$ is algebraic. After enlarging the model if
necessary, we obtain an \'etale morphism and an $M$-morphism
\begin{equation}\label{eq:display-1653}
 j_A:U_A\longrightarrow X\times_kM,
 \qquad
 f_A:U_A\longrightarrow\A^1_M
\end{equation}
whose base change to $\eta^{\operatorname{perf}}$ is the given test. Put
\begin{equation}\label{eq:display-1659}
 h=\operatorname{pr}_X\circ j_A:U_A\rightarrow X,
 \quad
 \pi=\operatorname{pr}_M\circ j_A:U_A\longrightarrow M,
 \quad
 \mathcal F_A=j_A^*\operatorname{pr}_X^*\mathcal F,
\end{equation}
and, using the cotangent-bundle isomorphism induced by $j_A$, put
\begin{equation}\label{eq:display-1667}
 C_A=U_A\times_X C\subset T^{\ast}(U_A/M),
\end{equation}
\begin{equation}\label{eq:display-1670}
 B_A
 =j_A^*\bigl(CC(\mathcal F)\times_kM\bigr)
 =\sum_\alpha m_\alpha[U_A\times_XC_\alpha].
\end{equation}
Let $ df_A:U_A\longrightarrow T^{\ast}(U_A/M)$
be the section defined by the relative differential of $f_A$, and define
\begin{equation}\label{eq:display-1680}
 Z_A
 =U_A\times_{df_A,\,T^{\ast}(U_A/M)}C_A.
\end{equation}
We have $(Z_A)_{\eta^{\operatorname{perf}}}=Z_{\operatorname{perf}}$.

The generic fiber $(Z_A)_{F_0}$ is finite, since it becomes the finite scheme
$Z_{\operatorname{perf}}$ after the faithfully flat extension
$F_0\subset K^{\operatorname{perf}}$. The open quasi-finite locus of
$Z_A\to M$ therefore contains the generic fiber. Its complement has only
finitely many irreducible components, none of which dominates $M$. After
removing the closures of their images from $M$, we may assume that
$ Z_A\longrightarrow M$
is quasi-finite.


Since $j_A=(h,\pi)$ is \'etale, the morphism $h$ is smooth. Hence $h$
is $C$-transversal and
$SS(\mathcal F_A)=h^\circ C$
by \cite[Lemma~3.4(1) and Corollary~4.5(2)]{Sai17a}. Under the cotangent
decomposition induced by $j_A$, this equality becomes
\begin{equation}\label{eq:display-1734}
 T^{\ast}(U_A/k)
 \simeq T^{\ast}(U_A/M)\oplus\pi^*T^{\ast}(M/k),
 \qquad
 h^\circ C=C_A\oplus 0.
\end{equation}
For the $M$-morphism $f_A$, the composition of its cotangent map 
\begin{equation}\label{eq:auto-067}
df_A: T^*(\A^1_M/k)\times_{\A^1_M} U_A
\rightarrow T^*(U_A/k)
\end{equation}
with the projection
$T^{\ast}(U_A/k)\longrightarrow T^{\ast}(U_A/M)$
sends $(a\,dt,\beta)$ to $a\,df_A$, while covectors pulled back from
$M$ map identically to the second summand $\pi^*T^{\ast}(M/k)$.
Consequently, by the definition of transversality and the conicality of
$C_A$, the non-$C$-transversal locus of $(h,f_A)$ is precisely the
underlying set of
$ (df_A)^{-1}(C_A)=Z_A.$
Thus $(h,f_A)$ is $C$-transversal on $U_A\setminus Z_A$. Since the
cotangent map of $\pi$ has image in the second summand $\pi^*T^{\ast}(M/k)$, which meets
$C_A\oplus 0$ only in the zero-section, the pair $(h,\pi)$ is
$C$-transversal everywhere.

For a geometric point $\bar z$ of $Z_A$, with image $\bar s$ in $M$, define
\begin{equation}\label{eq:display-1765}
\begin{aligned}
I(\bar z)
 &=\bigl(B_{A,\bar s},df_{A,\bar s}\bigr)
   _{T^{\ast}(U_{A,\bar s}/\bar s),\bar z},\\
 \phi(\bar z)
 &=-\operatorname{dimtot}\,
   R\Phi_{\bar z}(\mathcal F_{A,\bar s},f_{A,\bar s}).
\end{aligned}
\end{equation}
For every $\alpha$, set
\begin{equation}\label{eq:display-1775}
 \mathcal A_{\alpha,A}
 =\mathcal O_{U_A\times_XC_\alpha}
  \mathbin{\otimes^{\mathbf L}}_{\mathcal O_{T^{\ast}(U_A/M)}，df_A}
  \mathcal O_{U_A},
\end{equation}
where the last module structure is defined by the section $df_A$. Since
$M$ is smooth over $k$ and $U_A$ is smooth over $M$, the scheme
$T^{\ast}(U_A/M)$ is regular. Hence
$\mathcal O_{U_A\times_XC_\alpha}$ is perfect on it, and
$\mathcal A_{\alpha,A}$ is perfect on $U_A$. Since $U_A\to M$ is smooth,
$\mathcal A_{\alpha,A}$ has finite Tor-dimension over $M$.
Moreover,
\begin{equation}\label{eq:display-1788}
 U_A\times_XC_\alpha
 =U_A\times_{X\times_kM}(C_\alpha\times_kM)
\end{equation}
is \'etale over $C_\alpha\times_kM$, and therefore is flat over $M$; the
scheme $U_A$ is also flat over $M$. By derived base change we have
\begin{equation}\label{eq:auto-068}
\left(\mathcal{A}_{\alpha,A}\right)_{\overline{s}}
=
Li^*_{\overline{s}}\mathcal{A}_{\alpha,A}
\simeq
\mathcal O_{U_{A,\overline{s}}\times_X C_{\alpha}}
\mathbin{\otimes^{\mathbf L}}_{\mathcal O_{T^*(U_{A,\overline{s}}/\overline{s})},df_{A,\overline{s}}}
\mathcal O_{U_{A,\overline{s}}}
\end{equation}
where
$\overline{s}\rightarrow M$
is any geometric point.
Thus
$\left(\mathcal{A}_{\alpha,A}\right)_{\overline{s}}$
is
the derived
local intersection of
$U_{A,\overline{s}}\times_XC_\alpha$
and $df_{A,\overline{s}}(U_{A,\overline{s}})$
in $T^{\ast}(U_{A,\bar s}/\bar s)$. Its local Euler
characteristic is the corresponding isolated intersection multiplicity in the definition of the function $I$.
The cohomology of $\mathcal A_{\alpha,A}$ is supported on
\begin{equation}\label{eq:display-1798}
 (U_A\times_XC_\alpha)\cap df_A(U_A)\subset Z_A,
\end{equation}
hence on a closed subscheme quasi-finite over $M$. Therefore
\cite[Lemma~2.3 and the proof of Proposition~5.8]{Sai17a}, followed by
summing with the coefficients $m_\alpha$, shows that $I$ is constructible
and flat over $M$ in the sense of \cite[Definition~2.1]{Sai17a}.

The transversality of $(h,\pi)$ and of $(h,f_A)$ outside $Z_A$, together with
\cite[Lemma~4.2(5)]{Sai17a}, shows that $\pi$ is universally locally
acyclic relatively to $\mathcal F_A$ and that $f_A$ is universally locally
acyclic relatively to $\mathcal F_A$ on $U_A\setminus Z_A$. By
\cite[Proposition~2.16]{Sai17a} we have $\phi$ is also constructible and
flat over $M$ since
$\A^1_M\to M$ is a smooth relative curve and $Z_A\to M$ is quasi-finite.

Let $t$ be a closed point of $M$. Since $A$ is of finite type over the
algebraically closed field $k$, one has $\kappa(t)=k$, and
$ h_t:U_{A,t}\longrightarrow X$
is \'etale. By \cite[Lemma~5.11(2)]{Sai17a}, we have
\begin{equation}\label{eq:display-1820}
 B_{A,t}
 =h_t^*CC(\mathcal F)
 =CC(h_t^*\mathcal F)
 =CC(\mathcal F_{A,t}).
\end{equation}
The fiber $Z_{A,t}$ is finite. Saito's Milnor formula
\cite[Theorem~5.9]{Sai17a} therefore gives 
\begin{equation}
    \label{eq:iphicls}
    I(\bar z)=\phi(\bar z)
\end{equation}
for every geometric point $\bar z$ above every closed point $t\in M$.

Notice that $I-\phi$ is also a constructible function.
On every irreducible
component $Z'$ of $Z_A$, constructibility gives a dense open subset $V$ on which
$I-\phi$ equals its value at the geometric generic point of $Z'$. Since $Z'$ is
of finite type over the algebraically closed field $k$, it is Jacobson, and
$V$ contains a closed $k$-point $z$; its image in $M$, $z_M$, is also
closed. Hence regard \(\bar z\) as a geometric point of \(Z_A\) lying above $z_M$, by \eqref{eq:iphicls} we have $I(\overline{z})=\phi (\overline{z})$.
Thus $I-\phi$ vanishes at the generic point of every irreducible component of
$Z_A$. Saito \cite[Lemma~2.2(3)]{Sai17a} now gives
$ I-\phi=0$
on all geometric points of $Z_A$.

Choose an algebraic closure $\Omega$ of $K^{\operatorname{perf}}$ together
with an embedding $\kappa(u_0)\hookrightarrow\Omega$
denoted by $\bar u$. Since $K^{\operatorname{perf}}/F_0$ is algebraic,
$\Omega$ is also an algebraic closure of $F_0$, so $\bar u$ defines a
geometric point of $Z_A$ above the geometric generic point of $M$.
Moreover, $\kappa(u_0)/K^{\operatorname{perf}}$ is finite separable because
$K^{\operatorname{perf}}$ is perfect. Flat base change therefore preserves
the local intersection multiplicity at the chosen geometric point, hence we have
\begin{equation}\label{eq:display-1894}
 I(\bar u)
 =\bigl(j^*B,df\bigr)_
   {T^{\ast}(U/K^{\operatorname{perf}}),u_0}.
\end{equation}
Since $Z_A\to M$ is quasi-finite, the induced morphism
$Z_A\to\A^1_M$ is also quasi-finite. By the universal local
acyclicity of $f_A$ outside $Z_A$ and \cite[Proposition~2.8(1)]{Sai17a},
we may identify the vanishing-cycle complex defining
$\phi(\bar u)$ with $R\Phi_{\overline{u}}(j^*\widetilde{\mathcal F}_{\eta^{\operatorname{perf}}},f)$.
Consequently,
\begin{equation}\label{eq:display-1908}
 \phi(\bar u)
 =-\operatorname{dimtot}\,
  R\Phi_{\bar u}
  \bigl(j^*\widetilde{\mathcal F}_{\eta^{\operatorname{perf}}},f\bigr).
\end{equation}
Evaluating $I-\phi=0$ at $\bar u$ yields
\begin{equation}\label{eq:display-1919}
 \bigl(j^*B,df\bigr)_{T^{\ast}(U/K^{\operatorname{perf}}),u_0}
 =-\operatorname{dimtot}\,
  R\Phi_{\bar u}
  \bigl(j^*\widetilde{\mathcal F}_{\eta^{\operatorname{perf}}},f\bigr).
\end{equation}
Thus $B$ satisfies the Milnor formula for every at most isolated
$C_{\operatorname{perf}}$-characteristic test. Since
$ C_{\operatorname{perf}}
 =SS(\widetilde{\mathcal F}_{\eta^{\operatorname{perf}}})$,
by the uniqueness in \cite[Theorem~5.9]{Sai17a} we have
\begin{equation}\label{eq:display-1932}
 CC(\widetilde{\mathcal F}_{\eta^{\operatorname{perf}}})
 =CC(\mathcal F)\times_k\eta^{\operatorname{perf}}.
\end{equation}
This proves \eqref{eq:CC-perfect-pullback}.

\end{proof}

\subsection{}
We now verify the local-acyclicity condition on the ordinary generic
fiber.  By
\eqref{eq:CC-perfect-pullback},
\begin{equation}\label{eq:display-1947}
 SS(\mathcal F)\times_k\eta^{\operatorname{perf}}=SS(\widetilde{\mathcal F}_{\eta^{\operatorname{perf}}}).
\end{equation}
Since $\phi_{\eta^{\operatorname{perf}}}$ is $SS(\mathcal F)_{\eta^{\operatorname{perf}}}$-transversal outside $\mathcal W_{\eta^{\operatorname{perf}}}$,
\cite[Lemma~4.2(5)]{Sai17a} implies that $\phi_{\eta^{\operatorname{perf}}}$ is universally
locally acyclic relatively to $\widetilde{\mathcal F}_{\eta^{\operatorname{perf}}}$ outside
$\mathcal W_{\eta^{\operatorname{perf}}}$.
Notice that the morphism $\eta^{\operatorname{perf}}=\Spec K^{\operatorname{perf}}\to\Spec K=\eta$ is a universal homeomorphism，
The induced equivalences of \'etale topoi
\cite[Tag~03SI]{Stacks}, together with base change
identify the canonical morphisms in
Lemma~\ref{lem:Trans=ULA}(4) before and after this extension. 
Hence universal local acyclicity descends by the equivalence
of (2) and (4) in Lemma~\ref{lem:Trans=ULA}. Therefore, we have
\begin{equation}\label{eq:display-1958}
 \phi_\eta:\mathcal X_\eta\longrightarrow\mathcal Y_\eta
\end{equation}
is universally locally acyclic relatively to
$\widetilde{\mathcal F}_\eta$ outside $\mathcal W_\eta$.
By a slight abuse of notation, we denote both the specialization map on $H^0(\mathcal X_\eta,\mathcal K_{\mathcal X_{\eta}/\eta})$ and $H^0(\mathcal X_\eta,\mathcal K_{\mathcal X_\eta/\mathcal Y_\eta/\eta})$ by $\spet$.
The universal local acyclicity of
$\operatorname{pr}_S:\mathcal X\to S$ gives
\begin{equation}\label{eq:display-1966}
 R\Psi_{\operatorname{pr}_S}(\widetilde{\mathcal F}_\eta)
 \simeq \widetilde{\mathcal F}_s
 \simeq \mathcal F ,
\end{equation}
therefore Proposition~\ref{NAprop}(6) gives
\begin{equation}\label{eq:nasp}
 \spet
 \cC^{\mathcal W_\eta}_{
   \mathcal X_\eta/\mathcal Y_\eta/\eta}
   (\widetilde{\mathcal F}_\eta)
 =
 \cC^{Z}_{X/\A^1_k/k}(\mathcal F).
\end{equation}

\subsection{}
Recall that we have the section $\sigma=d\phi=df+u\,dg$.
Combining Lemma~\ref{perfdesc} with flat base change for the refined intersection, we have
\begin{equation}\label{eq:perfsecbc}
  \sigma_\eta^!\bigl(CC(\mathcal F)\times_k\eta\bigr)\times_\eta \eta^{\operatorname{perf}}
 =\sigma_{\eta^{\operatorname{perf}}}^!
   \bigl(CC(\mathcal F)\times_k\eta^{\operatorname{perf}}\bigr)
=\sigma_{\eta^{\operatorname{perf}}}^!
CC(\widetilde{\mathcal F}_{\eta^{\operatorname{perf}}}).
\end{equation}

\begin{lemma}
    \label{genperfmil}

    With the preceding notation, we have 
    \begin{equation}\label{eq:perfect-equality-manuscript}
 \cC^{\mathcal W_{\eta^{\operatorname{perf}}}}_{
 \mathcal X_{\eta^{\operatorname{perf}}}/
 \mathcal Y_{\eta^{\operatorname{perf}}}/\eta^{\operatorname{perf}}}
 (\widetilde{\mathcal F}_{\eta^{\operatorname{perf}}})
 =\widetilde{{\rm cl}}\!\left(
 \sigma_{\eta^{\operatorname{perf}}}^!
   \bigl(CC(\mathcal F)\times_k\eta^{\operatorname{perf}}\bigr)
 \right).
\end{equation}
\end{lemma}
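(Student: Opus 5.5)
The plan is to reduce to the isolated case, where the conjecture is already known. Over $\eta^{\operatorname{perf}}$ the locus $\mathcal W_{\eta^{\operatorname{perf}}}$ is finite over the perfect field $K^{\operatorname{perf}}$ by \S\ref{qafpfdef}. There the desired equality should follow from Saito's isolated Milnor formula \eqref{eq:SMF} together with the cohomological Milnor formula \eqref{eq:CMF}.

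The first step is to decompose both sides over the finitely many closed points of $\mathcal W_{\eta^{\operatorname{perf}}}$. Since this locus is finite, $H^0_{\mathcal W}(\mathcal X_{\eta^{\operatorname{perf}}},\mathcal K_{\cdots})$ splits as a direct sum of the groups $H^0_{w}$ over its points $w$. The non-acyclicity class is compatible with this splitting via restriction to open neighbourhoods $U_w$ with $U_w\cap\mathcal W=\{w\}$; this compatibility comes from the excision and restriction compatibility implicit in Proposition~\ref{NAprop}(2) applied to open immersions. The refined Gysin class $\sigma^!_{\eta^{\operatorname{perf}}}(CC)$ in ${\rm CH}_0(\mathcal W)$ likewise decomposes as a sum of local intersection numbers times $[w]$.

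So it suffices to treat the case of a single point $w$. Then $\widetilde{\rm cl}$ of a $0$-cycle supported at $w$ is computed via $H^0_w(\mathcal K_{X/k})\to H^0_w(\mathcal K_{X/Y/k})$. The hypothesis $H^0(Z,\mathcal K_{Z/Y})=H^1(Z,\mathcal K_{Z/Y})=0$ holds because $\{w\}\to Y$ is quasi-finite, so the dimensions give vanishing. The map \eqref{eq:deltaf} is then an isomorphism, and it is enough to compare $C^{\{w\}}(\widetilde{\mathcal F})$ with ${\rm cl}$ in $H^0_w(X,\mathcal K_{X/\eta^{\operatorname{perf}}})$. When $w$ is not rational over $K^{\operatorname{perf}}$ this group is $\Lambda$-valued only after a base change; here I would pass to an algebraic closure, or use the degree map, since $H^0_w\simeq\Lambda$ via the trace for $\kappa(w)/K^{\operatorname{perf}}$ finite separable. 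By \eqref{eq:CMF}, $C^{\{w\}}=-\operatorname{dimtot}R\Phi_{\bar w}(\widetilde{\mathcal F},\phi)$. By Proposition~\ref{prop:geoNA-affine-line} and \eqref{eq:SMF}, the local intersection number $(CC(\widetilde{\mathcal F}_{\eta^{\operatorname{perf}}}),d\phi)_w$ equals the same quantity.

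Finally, using Lemma~\ref{perfdesc}, I will identify $CC(\widetilde{\mathcal F}_{\eta^{\operatorname{perf}}})$ with $CC(\mathcal F)\times_k\eta^{\operatorname{perf}}$, which gives the statement as written. The main obstacle I expect is the non-algebraically-closed base. Proposition~\ref{NAprop}(4) is stated over a perfect field, but with $H^0_x\simeq\Lambda$ implicitly for rational points. I will handle this by base changing to $\overline{K}$ via Proposition~\ref{NAprop}(2) and the compatibility of cycle classes with field extension, noting that $H^0_w\to\bigoplus_{\bar w}H^0_{\bar w}$ is injective (it is the diagonal of $\Lambda$ into the sum over the geometric points over $w$). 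This reduces the claim to rational points over an algebraically closed field, where \eqref{eq:CMF} and \eqref{eq:SMF} apply directly.
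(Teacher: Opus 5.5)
Your proposal is correct and follows the paper's proof. Both arguments localize at the finitely many points $x$ of $\mathcal W_{\eta^{\operatorname{perf}}}$, identify both local contributions with $-\operatorname{dimtot}R\Phi_{\bar x}(\widetilde{\mathcal F}_{\eta^{\operatorname{perf}}},\phi_{\eta^{\operatorname{perf}}})$ via \eqref{eq:CMF} and Saito's Milnor formula, sum over $x$, and then replace $CC(\widetilde{\mathcal F}_{\eta^{\operatorname{perf}}})$ by $CC(\mathcal F)\times_k\eta^{\operatorname{perf}}$ using Lemma~\ref{perfdesc} (in the form \eqref{eq:perfsecbc}).

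Two small points. First, restriction to $U_x$ is justified by the \'etale locality of the non-acyclicity class \cite[Proposition~4.6]{YZ22}, not by Proposition~\ref{NAprop}(2), which concerns base change in $S$. Second, your detour through $\overline K$ is unnecessary: purity already gives $H^0_x(U_x,\mathcal K_{U_x/\eta^{\operatorname{perf}}})\simeq H^0(\kappa(x),\Lambda)=\Lambda$ for every closed point $x$. If you do take that detour, you would also need the analogue of Lemma~\ref{perfdesc} over $\overline K$ in order to apply Saito's formula there.
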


\begin{proof}
   Recall that under our construction, $\mathcal{W}_{\eta^{\operatorname{perf}}}$ is finite over $\eta^{\operatorname{perf}}$. For each closed point
$x\in|\mathcal W_{\eta^{\operatorname{perf}}}|$, choose an open neighbourhood $U_x$
containing no other point of $|\mathcal W_{\eta^{\operatorname{perf}}}|$. The \'etale
locality and the cohomological Milnor formula of non-acyclicity class \cite[Proposition~4.6 and Theorem~6.1]{YZ22} give
\begin{equation}\label{eq:YZ-point-value}
 C^{\{x\}}_{U_x/\mathcal Y_{\eta^{\operatorname{perf}}}/\eta^{\operatorname{perf}}}
   (\widetilde{\mathcal F}_{\eta^{\operatorname{perf}}}|_{U_x})
 =-\operatorname{dimtot}
   R\Phi_{\bar x}(\widetilde{\mathcal F}_{\eta^{\operatorname{perf}}},
                  \phi_{\eta^{\operatorname{perf}}})
 \quad\text{in }H_x^0(U_x,\mathcal K_{U_x/\eta^{\operatorname{perf}}})\simeq\Lambda.
\end{equation}
Saito's isolated characteristic-cycle formula
\cite[Theorem~5.9]{Sai17a} says, for the same point,
\begin{equation}\label{eq:Saito-point-value}
\begin{aligned}
&\operatorname{mult}_x\!\left(
  \sigma_{\eta^{\operatorname{perf}}}^!
   CC(\widetilde{\mathcal F}_{\eta^{\operatorname{perf}}})\right)\\
 &\qquad=
 \bigl(CC(\widetilde{\mathcal F}_{\eta^{\operatorname{perf}}}),
       d\phi_{\eta^{\operatorname{perf}}}\bigr)_{
          T^{\ast}(\mathcal X_{\eta^{\operatorname{perf}}}/\eta^{\operatorname{perf}}),x}
 =-\operatorname{dimtot}
   R\Phi_{\bar x}(\widetilde{\mathcal F}_{\eta^{\operatorname{perf}}},
                  \phi_{\eta^{\operatorname{perf}}})
 \quad\text{in }\mathbf Z.
\end{aligned}
\end{equation}
Since supported cohomology on a finite set decomposes as the direct sum of its
point-supported groups, summing over $x$ gives
\begin{equation}\label{eq:display-2040}
 \cC^{\mathcal W_{\eta^{\operatorname{perf}}}}_{
 \mathcal X_{\eta^{\operatorname{perf}}}/
 \mathcal Y_{\eta^{\operatorname{perf}}}/\eta^{\operatorname{perf}}}
 (\widetilde{\mathcal F}_{\eta^{\operatorname{perf}}})
 =\widetilde{{\rm cl}}
 \bigl(\sigma_{\eta^{\operatorname{perf}}}^!
CC(\widetilde{\mathcal F}_{\eta^{\operatorname{perf}}})
\bigr).
\end{equation}
Applying \eqref{eq:perfsecbc} we get the desired result.
\end{proof}

\begin{lemma}
\label{nacohgrpdesc}
Let $q_X:\mathcal X_{\eta^{\operatorname{perf}}}\to\mathcal X_\eta$ and
$q_Y:\mathcal Y_{\eta^{\operatorname{perf}}}\to\mathcal Y_\eta$ be the projections.  Then there is a canonical isomorphism
\begin{equation}\label{eq:display-2059}
 q_X^*\mathcal K_{\mathcal X_\eta/\mathcal Y_\eta/\eta}
 \xrightarrow{\sim}
 \mathcal K_{\mathcal X_{\eta^{\operatorname{perf}}}/\mathcal Y_{\eta^{\operatorname{perf}}}/\eta^{\operatorname{perf}}}.
\end{equation}
Moreover, we have
\begin{equation}\label{eq:display-2065}
 q_X^*:H^0_{\mathcal W_\eta}
   (\mathcal X_\eta,\mathcal K_{\mathcal X_\eta/\mathcal Y_\eta/\eta})
 \xrightarrow{\sim}
 H^0_{\mathcal W_{\eta^{\operatorname{perf}}}}
   (\mathcal X_{\eta^{\operatorname{perf}}},\mathcal K_{\mathcal X_{\eta^{\operatorname{perf}}}/\mathcal Y_{\eta^{\operatorname{perf}}}/\eta^{\operatorname{perf}}}).
\end{equation}
\end{lemma}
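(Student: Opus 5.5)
The plan is to reduce both assertions to the fact that $q_X:\mathcal X_{\eta^{\operatorname{perf}}}\to\mathcal X_\eta$ and $q_Y:\mathcal Y_{\eta^{\operatorname{perf}}}\to\mathcal Y_\eta$ are universal homeomorphisms. Each is the base change of $\Spec K^{\operatorname{perf}}\to\Spec K$, which is integral, radicial and surjective. By \cite[Tag~03SI]{Stacks}, pullback along such a morphism induces an equivalence of \'etale topoi. This equivalence identifies $D_{\rm ctf}$ on the two sides. It is also compatible with the six functors: $q^*$ commutes with $f_!$, $f_*$, $f^!$, $\otimes$ and $R\mathcal Hom$ for the base-changed morphisms.

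For the isomorphism of dualizing objects, first note that $\mathcal K_{\mathcal X_\eta/\mathcal Y_\eta/\eta}=\delta^\Delta\mathcal K_{\mathcal X_\eta/\eta}$ is built from the diagram \eqref{eq:propchangebasehomo:1}. All its terms ($\delta$, $\delta_0$, $\delta_1$, $i$, $p$) base change to the corresponding diagram over $\eta^{\operatorname{perf}}$. I would construct the comparison map in three steps.
\begin{enumerate}
\item Start from the base change map $q_X^*Rh_\eta^!\Lambda\to Rh_{\eta^{\operatorname{perf}}}^!\Lambda$. It is an isomorphism: $q$ is a universal homeomorphism, so $q^*$ is an equivalence that intertwines $h_!$ with $h_!$, and hence also their right adjoints $h^!$. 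One can also see it directly by purity, since $h_\eta$ is smooth, where $\mathcal K=\Lambda(d)[2d]$ on both sides compatibly.
\item Apply the same reasoning to $\mathcal K_{\mathcal X_\eta/\mathcal Y_\eta}$.
\item Define $\delta^\Delta$ as the cone appearing in \eqref{eq:distriK}, or equivalently via the specialization-type construction of \cite{YZ22}. Since $q^*$ commutes with each constituent functor $\delta^*$, $\delta^!$, $\delta_{0*}$ and $\delta_1^*$, there is a natural map $q_X^*\delta^\Delta\to\delta^\Delta q^*$, and it is an isomorphism.
\end{enumerate}
The main point to check is that the comparison is canonical and compatible with the triangle \eqref{eq:distriK}. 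I would handle this by choosing the map of triangles induced from the base change maps of the first two terms. I would then use the five lemma to conclude that the third map is an isomorphism, and take it to be the one given by the functorial construction of $\delta^\Delta$.

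For the statement on cohomology with supports, note that $q_X^{-1}(\mathcal W_\eta)=\mathcal W_{\eta^{\operatorname{perf}}}$ set-theoretically. The equivalence of topoi identifies the closed subtopoi and the open complements, so it commutes with the functors $\tau_*\tau^!$ (or $R\Gamma_{\mathcal W}$). It also commutes with global sections: $R\Gamma(\mathcal X_\eta,-)\simeq R\Gamma(\mathcal X_{\eta^{\operatorname{perf}}},q_X^*-)$, because $q_{X*}$ and $q_X^*$ are quasi-inverse equivalences. Combining this with the isomorphism of the first part gives that $q_X^*$ on $H^0_{\mathcal W_\eta}$ is an isomorphism.

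The main obstacle I expect is the canonicity of the first isomorphism, that is, its compatibility with the specific construction of $\delta^\Delta$ used in \cite[(4.6)]{YZ22}. The second part is formal once the first is in hand.
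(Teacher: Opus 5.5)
Your proposal is correct and follows essentially the same route as the paper. Both use that $q_X,q_Y$ are universal homeomorphisms, so $q_X^*,q_Y^*$ are equivalences. Both then derive $q_Y^*R\phi_{\eta!}\simeq R\phi_{\eta^{\operatorname{perf}}!}q_X^*$ and, by adjunction, the $!$-pullback base change, which yields the isomorphisms for $\mathcal K_{\mathcal X_\eta/\eta}$ and $\mathcal K_{\mathcal X_\eta/\mathcal Y_\eta}$. Both conclude via the triangle \eqref{eq:distriK}. Your extra remarks on canonicity of the third map and on compatibility with $\tau_*\tau^!$ and $R\Gamma$ make explicit what the paper leaves implicit.
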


\begin{proof}
The maps $q_X$ and $q_Y$ are universal homeomorphisms, hence $q_X^*$ and $q_Y^*$ are equivalences on the relevant
derived categories of constructible sheaves by \cite[Expos\'e~VIII, Th\'eor\`eme~1.1]{SGA4}.  For the
Cartesian square defined by
$\phi_\eta:\mathcal X_\eta\to\mathcal Y_\eta$, base change gives
\begin{equation}\label{eq:display-2079}
 q_Y^*R\phi_{\eta!}\xrightarrow{\sim}R\phi_{\eta^{\operatorname{perf}}!}q_X^*,
\end{equation}
by \cite[Tag~0F7L]{Stacks}.  Moreover, the two equivalences yield a canonical base change isomorphism
\begin{equation}\label{eq:display-2083}
 q_X^*R\phi_\eta^!\xrightarrow{\sim}R\phi_{\eta^{\operatorname{perf}}}^!q_Y^*.
\end{equation}
Hence by definition we have 
\begin{equation}\label{eq:display-2087}
 q_X^*\mathcal K_{\mathcal X_\eta/\eta}
 \xrightarrow{\sim}
 \mathcal K_{\mathcal X_{\eta^{\operatorname{perf}}}/\mathcal \eta^{\operatorname{perf}}},\qquad 
 q_X^*\mathcal K_{\mathcal X_\eta/\mathcal Y_\eta}
 \xrightarrow{\sim}
 \mathcal K_{\mathcal X_{\eta^{\operatorname{perf}}}/\mathcal Y_{\eta^{\operatorname{perf}}}},
\end{equation}
and the consequence follows from the distinguished triangle \eqref{eq:distriK}.
\end{proof}

\begin{lemma}
    \label{genmil}
    Retain the preceding notation. We have 
\begin{equation}\label{eq:perfdesc}
 \cC^{\mathcal W_\eta}_{
 \mathcal X_\eta/\mathcal Y_\eta/\eta}(\widetilde{\mathcal F}_\eta)
 =\widetilde{{\rm cl}}\!\left(
  \sigma_\eta^!\bigl(CC(\mathcal F)\times_k\eta\bigr)
 \right).
\end{equation}

\end{lemma}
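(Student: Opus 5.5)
We descend the equality of Lemma~\ref{genperfmil} from $\eta^{\operatorname{perf}}$ to $\eta$ along the isomorphism $q_X^*$ of Lemma~\ref{nacohgrpdesc}. Since $q_X^*$ is injective on $H^0_{\mathcal W_\eta}(\mathcal X_\eta,\mathcal K_{\mathcal X_\eta/\mathcal Y_\eta/\eta})$, it suffices to show that both sides of \eqref{eq:perfdesc} map under $q_X^*$ to the corresponding sides of \eqref{eq:perfect-equality-manuscript}. We check the two sides separately.

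For the left side, we use the pullback property Proposition~\ref{NAprop}(2) with $b:\eta^{\operatorname{perf}}\to\eta$. The diagram $\Delta^{\mathcal W_{\eta^{\operatorname{perf}}}}_{\mathcal X_{\eta^{\operatorname{perf}}}/\mathcal Y_{\eta^{\operatorname{perf}}}/\eta^{\operatorname{perf}}}$ is the base change of $\Delta^{\mathcal W_\eta}_{\mathcal X_\eta/\mathcal Y_\eta/\eta}$, and $\mathcal W_\eta\times_\eta\eta^{\operatorname{perf}}=\mathcal W_{\eta^{\operatorname{perf}}}$ as sets. Hence
\[
q_X^*\cC^{\mathcal W_\eta}_{\mathcal X_\eta/\mathcal Y_\eta/\eta}(\widetilde{\mathcal F}_\eta)=\cC^{\mathcal W_{\eta^{\operatorname{perf}}}}_{\mathcal X_{\eta^{\operatorname{perf}}}/\mathcal Y_{\eta^{\operatorname{perf}}}/\eta^{\operatorname{perf}}}(\widetilde{\mathcal F}_{\eta^{\operatorname{perf}}}).
\]
The hypotheses of Proposition~\ref{NAprop} hold on the $\eta$ side: local acyclicity of $\phi_\eta$ outside $\mathcal W_\eta$ was established just above, and $\mathcal X_\eta\to\eta$ is smooth with $\widetilde{\mathcal F}_\eta$ pulled back from $X$, hence universally locally acyclic. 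We also need that the identification of Lemma~\ref{nacohgrpdesc} is the pullback map $b_X^*$ used in Proposition~\ref{NAprop}(2). Both arise from the same base change isomorphisms for $\mathcal K$ and the triangle \eqref{eq:distriK}, so this compatibility should be formal.

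For the right side, we need that $\widetilde{\rm cl}$ commutes with pullback along $q_X$:
\[
q_X^*\,\widetilde{\rm cl}(\alpha)=\widetilde{\rm cl}(\alpha\times_\eta\eta^{\operatorname{perf}})\quad\text{for }\alpha\in{\rm CH}_0(\mathcal W_\eta).
\]
The cycle class map ${\rm cl}$ commutes with flat (indeed arbitrary) base change of smooth schemes over a field; see \cite[Cycle, 2.3]{SGA4h} or Riou. The map $\mathcal K_{X/k}\to\mathcal K_{X/Y/k}$ commutes with $q_X^*$ because the isomorphisms of Lemma~\ref{nacohgrpdesc} are compatible with \eqref{eq:distriK}. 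Applying this to $\alpha=\sigma_\eta^!(CC(\mathcal F)\times_k\eta)$ and using \eqref{eq:perfsecbc}, the right side of \eqref{eq:perfdesc} maps to the right side of \eqref{eq:perfect-equality-manuscript}.

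Lemma~\ref{genperfmil} then gives equality of the images, and injectivity of $q_X^*$ yields \eqref{eq:perfdesc}. The main obstacle is the cycle-class compatibility on zero-cycles. Points of $\mathcal W_\eta$ may have inseparable residue fields, so the base-changed scheme $\mathcal W_\eta\times_\eta\eta^{\operatorname{perf}}$ can be non-reduced, and the length multiplicities in the flat pullback must match the cohomological pullback. To handle this, we reduce to a single closed point $x$ with $\kappa(x)/K$ finite. We then compute ${\rm cl}$ via the coherent cycle class ${\rm cl}(x,\mathcal O_x)$ of \cite[Cycle, 2.3.3]{SGA4h}, which commutes with arbitrary base change and, over a field, gives length times the fundamental class. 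Both sides then equal the length of $\mathcal O_x\otimes_K K^{\operatorname{perf}}$ times the fundamental class of the reduced point.
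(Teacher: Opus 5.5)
Your proposal is correct and follows the paper's proof. Like the paper, you pull both sides back along the isomorphism $q_X^*$ of Lemma~\ref{nacohgrpdesc}, use \eqref{eq:pbCNA} for the non-acyclicity class, use base-change compatibility of ${\rm cl}$ together with naturality of $\rho$ and \eqref{eq:perfsecbc} for the cycle side, and conclude from Lemma~\ref{genperfmil}. Your extra argument via the coherent class ${\rm cl}(x,\mathcal O_x)$, which handles inseparable residue fields and non-reduced base change, spells out the step the paper simply cites as \cite[Cycle, Lemme~2.3.7]{SGA4h}.
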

\begin{proof}
Let $q_X$ and $q_Y$ be as before.  The distinguished triangle \eqref{eq:distriK}
 gives 
 \begin{equation}\label{eq:display-2113}
\rho_s:H^0(X,\mathcal K_{ X/k})\rightarrow H^0(X,\mathcal K_{X/\A^1_k/k}),\qquad
\rho_\eta:H^0(\mathcal X_\eta,\mathcal K_{\mathcal X_{\eta}/\eta})
\rightarrow H^0(\mathcal X_\eta,\mathcal K_{\mathcal X_\eta/\mathcal Y_\eta/\eta}),
 \end{equation}
and similarly we define $\rho_{\eta^{\operatorname{perf}}}$.
  Thus
\begin{equation}\label{eq:display-2120}
 \widetilde{{\rm cl}}_\eta
 =\rho_\eta\circ{\rm cl}_\eta,
 \qquad
 \widetilde{{\rm cl}}_{\eta^{\operatorname{perf}}}
 =\rho_{\eta^{\operatorname{perf}}}
    \circ{\rm cl}_{\eta^{\operatorname{perf}}}.
\end{equation}
We compare the pullbacks of the two sides of
\eqref{eq:perfdesc}.  Firstly, the pullback formula for the
non-acyclicity class \eqref{eq:pbCNA} gives
\begin{equation}\label{eq:CNA-perfect-pullback}
 q_X^*
 \cC^{\mathcal W_\eta}_{
   \mathcal X_\eta/\mathcal Y_\eta/\eta}
   (\widetilde{\mathcal F}_\eta)
 =
 \cC^{\mathcal W_{\eta^{\operatorname{perf}}}}_{
   \mathcal X_{\eta^{\operatorname{perf}}}/
   \mathcal Y_{\eta^{\operatorname{perf}}}/
   \eta^{\operatorname{perf}}}
   (\widetilde{\mathcal F}_{\eta^{\operatorname{perf}}}).
\end{equation}
On the cycle side, compatibility of the supported cycle class
map with scalar extension
\cite[Cycle, Lemme~2.3.7]{SGA4h} gives
\begin{equation}\label{eq:display-2148}
 q_X^*{\rm cl}_\eta(\sigma_\eta^!\bigl(CC(\mathcal F)\times_k\eta\bigr))
 =
 {\rm cl}_{\eta^{\operatorname{perf}}}
 \bigl(
   \sigma_\eta^!\bigl(CC(\mathcal F)\times_k\eta\bigr)\times_\eta\eta^{\operatorname{perf}}
 \bigr).
\end{equation}
Moreover, the naturality of morphisms involved gives the commutative square
\begin{equation}\label{eq:display-2157}
\begin{CD}
 H^0_{\mathcal W_\eta}
   (\mathcal X_\eta,\mathcal K_{\mathcal X_\eta/\eta})
 @>{\rho_\eta}>>
 H^0_{\mathcal W_\eta}
   (\mathcal X_\eta,
    \mathcal K_{\mathcal X_\eta/\mathcal Y_\eta/\eta})\\
 @V{q_X^*}VV @VV{q_X^*}V\\
 H^0_{\mathcal W_{\eta^{\operatorname{perf}}}}
   (\mathcal X_{\eta^{\operatorname{perf}}},
    \mathcal K_{\mathcal X_{\eta^{\operatorname{perf}}}/
      \eta^{\operatorname{perf}}})
 @>{\rho_{\eta^{\operatorname{perf}}}}>>
 H^0_{\mathcal W_{\eta^{\operatorname{perf}}}}
   (\mathcal X_{\eta^{\operatorname{perf}}},
    \mathcal K_{\mathcal X_{\eta^{\operatorname{perf}}}/
      \mathcal Y_{\eta^{\operatorname{perf}}}/
      \eta^{\operatorname{perf}}}).
\end{CD}
\end{equation}
Consequently, using \eqref{eq:perfsecbc}, we have
\begin{equation}\label{eq:tilde-cl-perfect-pullback}
\begin{aligned}
q_X^*\widetilde{{\rm cl}}_\eta(\sigma_\eta^!\bigl(CC(\mathcal F)\times_k\eta\bigr))
 &=
 \widetilde{{\rm cl}}_{\eta^{\operatorname{perf}}}
 \bigl(
   \sigma_\eta^!
 \bigl(CC(\mathcal F)\times_k\eta\bigr)
 \times_\eta\eta^{\operatorname{perf}}
 \bigr)\\
 &=
 \widetilde{{\rm cl}}_{\eta^{\operatorname{perf}}}
   \bigl(\sigma_{\eta^{\operatorname{perf}}}^!
 \bigl(CC(\mathcal F)\times_k\eta^{\operatorname{perf}}\bigr)\bigr).
\end{aligned}
\end{equation}
By \eqref{eq:perfect-equality-manuscript},
the right-hand sides of
\eqref{eq:CNA-perfect-pullback} and
\eqref{eq:tilde-cl-perfect-pullback} are equal,  hence
\begin{equation}\label{eq:display-2199}
 q_X^*
 \cC^{\mathcal W_\eta}_{
   \mathcal X_\eta/\mathcal Y_\eta/\eta}
   (\widetilde{\mathcal F}_\eta)
 =
 q_X^*\widetilde{{\rm cl}}_\eta
 \bigl(\sigma_\eta^!
 \bigl(CC(\mathcal F)\times_k\eta\bigr)
 \bigr).
\end{equation}
Finally, by the previous lemma~\ref{nacohgrpdesc}, the equality descends uniquely to
$\eta$, and we obtain
\begin{equation}\label{eq:display-2213}
 \cC^{\mathcal W_\eta}_{
   \mathcal X_\eta/\mathcal Y_\eta/\eta}
   (\widetilde{\mathcal F}_\eta)
 =
 \widetilde{{\rm cl}}_\eta\!\left(
   \sigma_\eta^!
   \bigl(CC(\mathcal F)\times_k\eta\bigr)
 \right)
\end{equation}
which is \eqref{eq:perfdesc}.
\end{proof}
 
\subsection{\normalfont\itshape Proof of Theorem~\ref{thmqafmil}}
\pushQED{\qed}

Recall that
 \begin{equation}\label{eq:display-2230}
\rho_s:H^0(X,\mathcal K_{ X/k})\rightarrow H^0(X,\mathcal K_{X/\A^1_k/k}),\qquad
\rho_\eta:H^0(\mathcal X_\eta,\mathcal K_{\mathcal X_{\eta}/\eta})
\rightarrow H^0(\mathcal X_\eta,\mathcal K_{\mathcal X_\eta/\mathcal Y_\eta/\eta})
 \end{equation}
 and by definition we have $\widetilde{{\rm cl}}_s=\rho_s\circ{\rm cl}_s$, $\widetilde{{\rm cl}}_\eta=\rho_\eta\circ{\rm cl}_\eta$. 
 By \cite[(4.47)]{YZ22}, we have 
 \begin{equation}\label{eq:display-2237}
 \spet \circ \rho_\eta=\rho_s\circ \spet.
 \end{equation}
Notice that every component of the cycle 
$CC(\mathcal F)\times_k[S]\in
 {\rm CH}_{d+1}\bigl(SS(\mathcal F)\times_k S/S\bigr)$ 
 is flat over $S$, Corollary~\ref{clspcomm} gives
\begin{equation}\label{eq:CC-specialization}
 \spet{\rm cl}_\eta\!\left(
   \sigma_\eta^!\bigl(CC(\mathcal F)\times_k\eta\bigr)
 \right)
 =
 {\rm cl}_s\!\left((df)^!CC(\mathcal F)\right).
\end{equation}
Combining \eqref{eq:nasp}, \eqref{eq:perfdesc} and \eqref{eq:CC-specialization}, we have
\begin{equation}\label{eq:display-2257}
\begin{aligned}
\cC^{Z}_{X/\A^1/k}(\mathcal F)
 &=\spet\cC^{\mathcal W_\eta}_{
   \mathcal X_\eta/\mathcal Y_\eta/\eta}(\widetilde{\mathcal F}_\eta)\\
 &=\spet\rho_\eta{\rm cl}\!\left(
    \sigma_\eta^!\bigl(CC(\mathcal F)\times_k\eta\bigr)\right)\\
 &=\rho_s\spet{\rm cl}\!\left(
    \sigma_\eta^!\bigl(CC(\mathcal F)\times_k\eta\bigr)\right)\\
 &=\rho_s{\rm cl}\bigl((df)^!CC(\mathcal F)\bigr).
\end{aligned}
\end{equation}
This is Theorem~\ref{thmqafmil}.

\popQED


\begin{thebibliography}{20}

	
	\bibitem{AS07}
	A. Abbes and T. Saito,
	\emph{The characteristic class and ramification of an $\ell$-adic \'etale sheaf}, Inventiones mathematicae, 168 (2007): 567-612.
	
	\bibitem{Abe21}
	T. Abe, \emph{Ramification theory from homotopical point of view, I},
	arXiv:2206.02401v2 (2026), \url{https://arxiv.org/abs/2206.02401v2}.
	
	
	
	\bibitem{SGA4}
	M. Artin, A. Grothendieck, and J.-L. Verdier,
	\emph{Th\'eorie des topos et cohomologie \'etale des sch\'emas},
	S\'eminaire de G\'eom\'etrie Alg\'ebrique du Bois-Marie 1963--1964 (SGA~4),
	avec la collaboration de N. Bourbaki, P. Deligne, and B. Saint-Donat,
	Lecture Notes in Math., vols. 269, 270, and 305, Springer, Berlin, 1972--1973.
	
	
	\bibitem{Bei16}
	A. Beilinson, \emph{Constructible sheaves are holonomic},
	Selecta Math. (N.S.) 22 (2016), 1797--1819,
	\href{https://doi.org/10.1007/s00029-016-0260-z}{doi:10.1007/s00029-016-0260-z}.
	
	
	
	
	
	\bibitem{Del73}
	P. Deligne, \emph{La formule de dualit\'e globale}, Expos\'e XVIII,
	in \emph{Th\'eorie des topos et cohomologie \'etale des sch\'emas} (SGA~4),
	Tome 3, Lecture Notes in Math., vol. 305, Springer, Berlin, 1973,
	pp. 481--587.
	
	
	\bibitem{Del72}
	P. Deligne, \emph{La formule de Milnor}, Expos\'e XVI,
	in \emph{Groupes de monodromie en g\'eom\'etrie alg\'ebrique} (SGA~7 II),
	Lecture Notes in Math., vol. 340, Springer, Berlin, 1973, pp. 197--211.
	
	\bibitem{SGA4h}
	P. Deligne, \emph{Cohomologie \'etale} (SGA~$4\frac{1}{2}$),
	avec la collaboration de J.-F. Boutot, A. Grothendieck, L. Illusie, and J.-L. Verdier,
	Lecture Notes in Math., vol. 569, Springer, Berlin, 1977.
	
	\bibitem{Del11}
	P. Deligne, \emph{Notes sur Euler--Poincar\'e: brouillon project},
	unpublished notes, February 8, 2011.
	
	\bibitem{Ful98}
	W. Fulton, \emph{Intersection Theory}, 2nd ed.,
	Ergebnisse der Mathematik und ihrer Grenzgebiete. 3. Folge,
	vol. 2, Springer-Verlag, Berlin, 1998.

%
%
%
	
%
%
%
%
	
	
	
	
	\bibitem{LZ19}
	Q. Lu and W. Zheng, \emph{Duality and nearby cycles over general bases},
	Duke Math. J. 168 (2019), 3135--3213,
	\href{https://doi.org/10.1215/00127094-2019-0057}{doi:10.1215/00127094-2019-0057}.
	
	\bibitem{LZ22}
	Q. Lu and W. Zheng, \emph{Categorical traces and a relative Lefschetz--Verdier formula},
	Forum Math. Sigma 10 (2022), Paper No. e10, 24 pp.,
	\href{https://doi.org/10.1017/fms.2022.2}{doi:10.1017/fms.2022.2}.
	
%
%
%

	\bibitem{Rio14}
	J. Riou, \emph{Classes de Chern, morphismes de Gysin, puret\'e absolue},
	in \emph{Travaux de Gabber sur l'uniformisation locale et la cohomologie
	\'etale des sch\'emas quasi-excellents}, Expos\'e XVI,
	Ast\'erisque 363--364 (2014), 301--349.
	
	\bibitem{Sai17a}
	T. Saito, \emph{The characteristic cycle and the singular support of a constructible sheaf},
	Invent. Math. 207 (2017), 597--695,
	\href{https://doi.org/10.1007/s00222-016-0675-3}{doi:10.1007/s00222-016-0675-3}.
	
%
	
	\bibitem{Sai21}
	T. Saito,
	\emph{Characteristic cycles and the conductor of direct image},
	J. Amer. Math. Soc. 34 (2021), 369--410,
	\href{https://doi.org/10.1090/jams/959}{doi:10.1090/jams/959}.
	
%
	\bibitem{Stacks}
	The Stacks Project Authors, \emph{The Stacks Project},
	\url{https://stacks.math.columbia.edu}.

    
	\bibitem{UYZ}
	N. Umezaki, E. Yang, and Y. Zhao,
	\emph{Characteristic class and the $\varepsilon$-factor of an \'etale sheaf},
	Trans. Amer. Math. Soc. 373 (2020), 6887--6927,
	\href{https://doi.org/10.1090/tran/8187}{doi:10.1090/tran/8187}.
	
	
	
	\bibitem{YZ18}
	E. Yang and Y. Zhao, \emph{On the relative twist formula of $\ell$-adic sheaves},
	Acta Math. Sin. (Engl. Ser.) 37 (2021), 73--94,
	\href{https://doi.org/10.1007/s10114-019-8356-y}{doi:10.1007/s10114-019-8356-y}.
	

	\bibitem{YZ22}
	E. Yang and Y. Zhao, \emph{Cohomological Milnor formula and Saito's conjecture on characteristic classes},
	Invent. Math. 240 (2025), 123--191,
	\href{https://doi.org/10.1007/s00222-025-01319-y}{doi:10.1007/s00222-025-01319-y}.

\end{thebibliography}
\end{document}